\def\P{\mathcal{P}}
\def\I{\mathcal{I}}
\def\C{\mathscr{C}}
\def\E{\mathbb{E}}
\def\s{\mathfrak{s}}
\def\op{^\mathrm{op}}
\def\Ab{\mathsf{Ab}}
\def\del{\delta}
\def\dr{\ar@{->}[r]}
\def\X{\mathscr{X}}
\def\Y{\mathscr{Y}}
\newcommand{\CC}{{\bf{C}}^{n+2}_{\C}}
\newcommand{\mr}{\hbox{\boldmath$\cdot$}}
\newcommand{\ov}{\overset}
\newcommand{\lra}{\longrightarrow}
\newcommand{\co}{\colon}
\newcommand{\uas}{^{\ast}}            
\newcommand{\sas}{_{\ast}}
\newcommand{\Xd}{\langle X^{\mr},\del\rangle}  
\newcommand{\Yr}{\langle Y^{\mr},\rho\rangle}  
\newcommand{\ush}{^\sharp}           
\newcommand{\ssh}{_\sharp}
\begin{document}
\baselineskip=15pt
\title{\Large{\bf Higher Auslander's defect and classifying substructures of $\bm{n}$-exangulated categories \footnotetext{\hspace{-1em}$^\ast$Corresponding author. Jiangsheng Hu was supported by the NSF of China (Grant Nos. 12171206 and 11771212) and the Natural Science Foundation of Jiangsu Province (Grant No. BK20211358). Panyue Zhou was supported by the National Natural Science Foundation of China (Grant No. 11901190) and the Scientific Research Fund of Hunan Provincial Education Department (Grant No. 19B239).}}}
\medskip
\author{Jiangsheng Hu, Yajun Ma, Dongdong Zhang$^\ast$ and Panyue Zhou}

\date{}

\maketitle
\def\blue{\color{blue}}
\def\red{\color{red}}

\newtheorem{theorem}{Theorem}[section]
\newtheorem{lemma}[theorem]{Lemma}
\newtheorem{corollary}[theorem]{Corollary}
\newtheorem{proposition}[theorem]{Proposition}
\newtheorem{conjecture}{Conjecture}
\theoremstyle{definition}
\newtheorem{definition}[theorem]{Definition}
\newtheorem{question}[theorem]{Question}
\newtheorem{remark}[theorem]{Remark}
\newtheorem{remark*}[]{Remark}
\newtheorem{example}[theorem]{Example}
\newtheorem{example*}[]{Example}
\newtheorem{condition}[theorem]{Condition}
\newtheorem{condition*}[]{Condition}
\newtheorem{construction}[theorem]{Construction}
\newtheorem{construction*}[]{Construction}

\newtheorem{assumption}[theorem]{Assumption}
\newtheorem{assumption*}[]{Assumption}

\baselineskip=17pt
\parindent=0.5cm

\begin{abstract}
\baselineskip=16pt
Herschend-Liu-Nakaoka introduced the notion of $n$-exangulated
categories. It is not only a higher
dimensional analogue of extriangulated categories defined by Nakaoka-Palu,
but also gives a simultaneous generalization of $n$-exact categories and $(n+2)$-angulated categories.  In this article, we give an $n$-exangulated version of Auslander's defect and  Auslander-Reiten duality formula. Moreover, we also give a classification of substructures (=closed subbifunctors) of a given skeletally small $n$-exangulated category by using the category of defects. \\[0.3cm]
\textbf{Keywords:} $n$-exangulated categories; defects;
 Auslander's defect formula; substructures. \\[0.1cm]
\textbf{ 2020 Mathematics Subject Classification:} 18G80; 18E10; 18E05.
\medskip
\end{abstract}

\pagestyle{myheadings}
\markboth{\rightline {\scriptsize J. Hu, Y. Ma, D. Zhang, P. Zhou\hspace{2mm}}}
         {\leftline{\scriptsize Higher Auslander's defect and classifying substructures of $n$-exangulated categories}}

\section{Introduction}
The notion of extriangulated categories was introduced in \cite{NP}, which can be viewed
 as a simultaneous generalization of exact categories and triangulated categories.
 The data of such a category is a triplet $(\C,\E,\s)$, where $\C$ is an additive category, $\mathbb{E}: \C^{\rm op}\times \C \rightarrow \Ab$ is an additive bifunctor and $\mathfrak{s}$ assigns to each $\delta\in \mathbb{E}(C,A)$ a class of $3$-term sequences with end terms $A$ and $C$ such that certain axioms hold. Recently, Herschend-Liu-Nakaoka \cite{HLN} introduced an $n$-analogue of this notion called $n$-exangulated category. Such a category is a similar triplet $(\C,\E,\s)$, with the main distinction being that the $3$-term sequences mentioned above are replaced by $(n+2)$-term sequences. It should be noted that the case $n =1$ corresponds to extriangulated categories. As typical examples we have that $n$-exact and $(n+2)$-angulated categories are $n$-exangulated, see \cite[Propositions 4.34 and 4.5]{HLN}. However,
 there are some other examples of $n$-exangulated categories which are neither $n$-exact nor $(n+2)$-angulated, see \cite[Remark 4.39]{HLN} and \cite[Remark 4.5]{LZ}.

The notion of defects over abelian categories was introduced by Auslander in \cite{A1}, see also
\cite[IV. 4]{ARS}. Recently, this notion was generalized by Jasso-Kvamme \cite{JK} over $n$-exact categories and by Ogawa \cite{Oga} over extriangulated categories. In this paper, we define defects over $n$-exangulated categories. More precisely,
let $(\C,\E,\s)$ be an $n$-exangulated category.
If
  $$X^{0}\xrightarrow{d_X^{0}} X^{1}\xrightarrow{d_X^{1}} \cdots\xrightarrow{} X^{n-1}\xrightarrow{d_X^{n-1}} X^{n}\xrightarrow{d_X^n} X^{n+1}\stackrel{\delta}{\dashrightarrow},$$
is a distinguished $n$-exangle in $\C$, then the contravariant defect of $\delta$, denoted by $\widetilde{\delta}$, is defined by
$$\C(-, X^{n})\xrightarrow{\C(-,\ d_X^n)} \C(-, X^{n+1})\rightarrow \widetilde{\delta}\rightarrow 0 $$ the exact sequence of the functors. The covariant defect of $\delta$, denoted by $\check{\delta}$, is defined dually.

 One of interesting applications of defects, due to Jasso-Kvamme \cite[Theorem 3.8]{JK}, is that one can get higher version of Auslander's defect formula using a minor modification of Krause's proof of the classical formula \cite{K}. As an immediate consequence, one can obtain the higher Auslander-Reiten duality formula; see \cite[Corollary 3.9]{JK} for instance. Recall that Auslander's defect formula appeared as \cite[Theorem III.4.1]{A2} for the first time and the Auslander-Reiten duality formula appeared at about the same time as Proposition 3.1 in Auslander and Reiten's classical paper on the existence of almost split sequences \cite{AR2}. Recently, some analogues of the above formulas can be seen in \cite{I,J,L1,L2}. Our first main result is the following, which provides a higher counterpart of Auslander's defect and Auslander-Reiten duality formula over $n$-exangulated categories.

\begin{theorem}{\rm (see Theorem \ref{thm4} and Corollary \ref{corollary:Auslander-Reiten duality} for details)} \label{thm:1.2}
Let $(\C, \mathbb{E}, \mathfrak{s})$ be an $n$-exangulated category with enough projectives and enough injectives. Assume that $\C$ is a dualizing $k$-variety. Then there exists an equivalence $\tau\colon\underline{\C}\rightarrow \overline{\C}$ satisfying the following properties:
\begin{itemize}
\item[\rm (1)]  $D\check{\delta}=\widetilde{\delta}\tau^{-1},$ $D\widetilde{\delta}=\check{\delta}\tau$;

\item[\rm (2)] $D\E(-,X)\cong\underline{\C}(\tau^{-1}X,-),$ $D\E(X,-)\cong\overline{\C}(-,\tau X),$
\end{itemize}
where $\underline{\C}$ (resp.   $\overline{\C}$) is the projectively (resp. injectively) stable category of $\C$ and $D$ is the $k$-dual.
 \end{theorem}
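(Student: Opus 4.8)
The plan is to follow the strategy pioneered by Krause \cite{K} and adapted by Jasso--Kvamme \cite{JK} to the $n$-exact setting, transporting it to the $n$-exangulated world. The first step is to establish the existence of the Auslander--Reiten translation $\tau\colon\underline{\C}\to\overline{\C}$. Under the dualizing $k$-variety hypothesis, the functor categories $\mod\underline{\C}$ and $\mod\overline{\C}$ are themselves dualizing $k$-varieties, and the assignment sending a distinguished $n$-exangle with last map $d_X^n$ to its contravariant defect $\widetilde{\delta}$ is a full, dense functor from the category of distinguished $n$-exangles onto a suitable subcategory of $\mod\underline{\C}$; dually for $\check\delta$ into $\mod\overline{\C}$. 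Composing the duality $D\colon\mod\underline{\C}\to\mod(\underline{\C}\op)$ with these two descriptions should force the existence of an equivalence $\tau$ on the stable categories making the square $D\check\delta\cong\widetilde{\delta}\,\tau^{-1}$ commute; this is essentially formula (1), and it is obtained by ``reading off'' the duality on defects. I expect the construction of $\tau$ to be cleanest if one first treats the case where $\C$ has enough projectives/injectives so that every object has a well-behaved projective (co)resolution of length governed by $n$, and then identifies $\widetilde{\delta}$ with a cokernel in $\mod\underline{\C}$.

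The second step is to prove part (2). Here the input is the $n$-exangulated Auslander defect formula (the first result quoted as Theorem \ref{thm4}), which should read, for an object $X$ and a distinguished $n$-exangle $\delta$, as a natural isomorphism relating $D$ applied to $\E(-,X)$ (or a defect thereof) with a Hom-functor on the stable category. Concretely, I would apply the defect formula to the specific $n$-exangle realizing an element of $\E(C,X)$ for varying $C$, then use Yoneda-type arguments in $\mod\underline{\C}$: the functor $C\mapsto D\E(C,X)$ is representable in $\underline{\C}$ (by the dualizing property it lies in $\mod\underline{\C}$), and its representing object is precisely $\tau^{-1}X$, giving $D\E(-,X)\cong\underline{\C}(\tau^{-1}X,-)$. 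The identity $D\E(X,-)\cong\overline{\C}(-,\tau X)$ is then obtained either dually or by applying $D$ once more and using that $\tau$ is an equivalence.

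The main obstacle I anticipate is technical bookkeeping around the higher-dimensional defects: unlike the $n=1$ case, a distinguished $n$-exangle has $n+2$ terms, and the ``defect'' only sees the last map $d_X^n$, so one must check that the assignment $\delta\mapsto\widetilde{\delta}$ interacts correctly with the $n$-exangulated axioms (in particular the analogue of (ET4)/(EA2) used to splice exangles) in order to guarantee that $\widetilde{\delta}$ depends only on the stable-category data and that the relevant functor into $\mod\underline{\C}$ is genuinely full and faithful on the appropriate quotient. A related subtlety is ensuring the projective and injective stabilizations are compatible enough that $\tau$ is a well-defined equivalence rather than merely an equivalence up to these stabilizations; this is where the ``enough projectives and enough injectives'' hypothesis does the real work, letting one replace arbitrary objects by objects admitting the conflations needed to define $\tau$ and $\tau^{-1}$ as mutually inverse constructions. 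Once these coherence checks are in place, properties (1) and (2) should follow formally by combining the defect formula with the duality $D$ and Yoneda.
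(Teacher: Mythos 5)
Your plan is essentially the paper's proof: it builds the defect assignments as functors from the category of $\mathfrak{s}$-deflations to $\mod\underline{\C}$ and to $\mod(\overline{\C}^{op})$, shows each becomes an equivalence after killing split epimorphisms, composes the resulting duality with $D$ to obtain an equivalence $\mod\underline{\C}\simeq\mod\overline{\C}$ carrying representables to representables (which is what defines $\tau$ and yields formula (1) by "reading off" the duality on defects), and then deduces (2) by evaluating (1) on the distinguished $n$-exangles with projective, resp. injective, middle terms, exactly as you propose. The only point to tighten is your justification of representability of $C\mapsto D\E(C,X)$: membership in the module category is not enough; the correct reason is that the equivalence sends the projective $\underline{\C}(-,X)$ to $D\E(X,-)$, and projectives in $\mod\overline{\C}$ are representable because $\overline{\C}$ is a dualizing $k$-variety.
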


There are two remarks on Theorem \ref{thm:1.2}: (1) It generalizes Jasso-Kvamme's results on
the category of finitely generated left $\Lambda$-modules over an Artin algebra $\Lambda$ and Lin's results on an exact category and is new for $n$-cluster tilting subcategories of an extriangulated category (see Corollary \ref{corollary:3.15}); (2)  As a consequence of Theorem \ref{thm:1.2}, we give a method to decide whether an $(n+2)$-angulated category has Auslander-Reiten $(n+2)$-angles (see Corollary \ref{corollary:3.14}).

Another interesting application of defects, due to Enomoto \cite{Eno}, is that one can give a classification of substructures of a given skeletally small extriangulated category. We show that this result has a higher counterpart:

\begin{theorem}{\rm (see Theorem \ref{main47} for details)}
Let  $(\C, \mathbb{E}, \mathfrak{s})$ be a skeletally small $n$-exangulated category with weak-kernel. Denote by ${\rm def}\mathbb{E}$ the full subcategory of finitely presented functors isomorphic to contravariant defects.
Then the map $\mathbb{F}\mapsto {\rm def}\mathbb{F}$ gives an isomorphism of the following posets, where the poset structures are given by inclusion.
\begin{itemize}
\item[{\rm (1)}] The poset of closed subbifunctors of $\mathbb{E}$;

\item[{\rm (2)}] The poset of  Serre subcategories of ${\rm def}\mathbb{E}$.
\end{itemize}
\end{theorem}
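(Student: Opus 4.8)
The plan is to transport Enomoto's proof for extriangulated categories \cite{Eno} into the $n$-exangulated world; the extra difficulty is that every manipulation of exangles now takes place among $(n+2)$-term complexes and leans on the higher octahedral axiom (EA2) of \cite{HLN}.

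\emph{Setup and three lemmas.} Since $\C$ is skeletally small and has weak-kernels, the category $\mod\C$ of finitely presented functors $\C^{\rm op}\to\Ab$ is abelian, with the representables $\C(-,X)$ as projective generators, and every contravariant defect $\widetilde{\del}$ lies in $\mod\C$. The first point is that ${\rm def}\mathbb{E}$ is a \emph{Serre} subcategory of $\mod\C$, hence itself abelian so that the statement is meaningful: closure under quotients is immediate from the presentation of $\widetilde{\del}$; closure under subobjects follows by pulling a distinguished $n$-exangle back along a morphism into its last term and checking, via the exact sequences $\C(W,X^{\mr})$ and the realization axioms for $\s$, that the induced morphism of defects is monic; closure under extensions uses (EA2) to glue two $n$-exangles realizing the prescribed sub- and quotient-defect. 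This is the $n$-analogue of \cite{Oga}. Together with it I shall use freely: (i) for any additive subfunctor $\mathbb{F}\subseteq\mathbb{E}$, any $\del\in\mathbb{F}(C,A)$, and any $c\co C'\to C$, $a\co A\to A'$, the morphisms of $n$-exangles realizing the pullback $c^{\ast}\del\in\mathbb{F}(C',A)$ and the pushout $a_{\ast}\del\in\mathbb{F}(C,A')$ induce a monomorphism $\widetilde{c^{\ast}\del}\hookrightarrow\widetilde{\del}$ and an epimorphism $\widetilde{\del}\twoheadrightarrow\widetilde{a_{\ast}\del}$, so that ${\rm def}\mathbb{F}$ is closed under subobjects and quotients inside ${\rm def}\mathbb{E}$; (ii) if $g$ and $f$ are composable $\s$-deflations, then $fg$ is again an $\s$-deflation by (EA2), and with $\del_g,\del_f,\del_{fg}$ the classes of the corresponding $n$-exangles, (EA2) yields a short exact sequence $0\to Q\to\widetilde{\del_{fg}}\to\widetilde{\del_f}\to 0$ in $\mod\C$ with $Q$ a quotient of $\widetilde{\del_g}$; (iii) \emph{(detection)} if $\widetilde{\del}\cong\widetilde{\del'}$ in $\mod\C$ with $\del'$ an $\mathbb{F}$-exangle, then $\del\in\mathbb{F}$ — indeed, lifting the isomorphism and its inverse along the projective presentations produces morphisms of $n$-exangles exhibiting $\del$ as obtained from $\del'$ by pullbacks and pushouts, whence $\del\in\mathbb{F}$ by functoriality of $\mathbb{F}$.

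\emph{Closed subbifunctors give Serre subcategories.} Let $\mathbb{F}$ be a closed subbifunctor, so that $\mathbb{F}$-inflations and $\mathbb{F}$-deflations are closed under composition and $(\C,\mathbb{F},\s|_{\mathbb{F}})$ is again $n$-exangulated. By (i), ${\rm def}\mathbb{F}$ is a full subcategory of ${\rm def}\mathbb{E}$ closed under subobjects and quotients. For extensions, take $0\to G'\to G\to G''\to 0$ in ${\rm def}\mathbb{E}$ with $G',G''\in{\rm def}\mathbb{F}$; realize this extension, via (EA2), as the defect sequence of a composite of two distinguished $n$-exangles whose defects are $G'$ and $G''$, deduce by (iii) that both factors are $\mathbb{F}$-exangles, and conclude — $\mathbb{F}$ being closed — that their composite is an $\mathbb{F}$-exangle, i.e. $G\in{\rm def}\mathbb{F}$. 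Hence ${\rm def}\mathbb{F}$ is a Serre subcategory of ${\rm def}\mathbb{E}$, and $\mathbb{F}\mapsto{\rm def}\mathbb{F}$ is well-defined and inclusion-preserving.

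\emph{Serre subcategories give closed subbifunctors, and the maps are mutually inverse.} Given a Serre subcategory $\mathcal{S}\subseteq{\rm def}\mathbb{E}$, put $\mathbb{F}_{\mathcal{S}}(C,A):=\{\del\in\mathbb{E}(C,A):\widetilde{\del}\in\mathcal{S}\}$. By (i) and the sub/quotient-closedness of $\mathcal{S}$, $\mathbb{F}_{\mathcal{S}}$ is stable under pullback and pushout; and since $\widetilde{\del\oplus\del'}\cong\widetilde{\del}\oplus\widetilde{\del'}$ while $\del+\del'$ is obtained from $\del\oplus\del'$ by a pullback and a pushout, each $\mathbb{F}_{\mathcal{S}}(C,A)$ is a subgroup of $\mathbb{E}(C,A)$; so $\mathbb{F}_{\mathcal{S}}$ is an additive subfunctor of $\mathbb{E}$. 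It is closed: if $g,f$ are composable $\mathbb{F}_{\mathcal{S}}$-deflations then $\widetilde{\del_f},\widetilde{\del_g}\in\mathcal{S}$, so in the sequence (ii) the term $Q$ lies in $\mathcal{S}$ and hence so does $\widetilde{\del_{fg}}$, i.e. $fg$ is an $\mathbb{F}_{\mathcal{S}}$-deflation; the dual assertion for inflations follows from the dual of (ii) (with covariant defects). Finally the two assignments are mutually inverse: ${\rm def}(\mathbb{F}_{\mathcal{S}})=\mathcal{S}$, since ``$\subseteq$'' is the definition while any $X\in\mathcal{S}\subseteq{\rm def}\mathbb{E}$ is some $\widetilde{\del}$, whence $\del\in\mathbb{F}_{\mathcal{S}}$ and $X\in{\rm def}(\mathbb{F}_{\mathcal{S}})$; and $\mathbb{F}_{{\rm def}\mathbb{F}}=\mathbb{F}$, since ``$\supseteq$'' is clear and ``$\subseteq$'' is exactly (iii). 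Both maps preserve inclusion, so together they give the asserted isomorphism of posets.

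\emph{Main obstacle.} The technical core is lemma (ii) and the extension-closedness in the Setup — that is, controlling the defect of a composite $n$-exangle through the higher octahedral axiom (EA2), and conversely showing that every extension of contravariant defects is realized by such a composite. For $n=1$ this is the content of \cite{Oga,Eno}; for general $n$ the $(n+2)$-term bookkeeping in (EA2) — identifying which components of a morphism of $n$-exangles induce the relevant sub- and quotient-defects, and extracting the short exact sequence of (ii) in the clean form stated — is markedly more delicate, and that is where most of the work will go.
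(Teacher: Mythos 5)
Your overall architecture coincides with the paper's (which in turn follows Enomoto): the two assignments $\mathbb{F}\mapsto{\rm def}\mathbb{F}$ and $\mathcal{S}\mapsto\mathbb{F}_{\mathcal{S}}$, the detection statement (iii) to get $\mathbb{F}_{{\rm def}\mathbb{F}}=\mathbb{F}$ (this is exactly the paper's Lemma 4.5, proved there via a lift of the isomorphism and \cite[Lemma 3.14]{HLN}), and deducing that $\mathbb{F}_{\mathcal{S}}$ is closed from closure of $\mathbb{F}_{\mathcal{S}}$-deflations under composition via \cite[Proposition 3.16]{HLN}. Your lemma (ii) is a nice, purely Yoneda-level way of handling the step the paper defers to Enomoto, and your use of (i) together with Serre-closure of $\mathcal{S}$ for the subbifunctor axioms is fine. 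Two caveats on details: the defect monomorphism in (i) needs no (EA2) at all (identify $\widetilde{\delta}$ with the image of $\delta_\sharp$ in $\E(-,X^0)$), and your one-line justification of (iii) is incomplete as stated, since a single morphism of $n$-exangles only gives $(f^0)_*\delta=(f^{n+1})^*\delta'$; you need either the two-sided argument (the composite lift satisfies that $1_{X^{n+1}}-g^{n+1}f^{n+1}$ factors through $d_X^n$, hence acts trivially on $\delta$, so $\delta=(f^{n+1})^*(g^0)_*\delta'$) or, as the paper does, \cite[Lemma 3.14]{HLN}.

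The genuine gap is in the direction ``closed subbifunctor $\Rightarrow$ Serre subcategory'' (and already in your Setup claim that ${\rm def}\mathbb{E}$ is Serre). Statement (i) does \emph{not} give quotient-closure of ${\rm def}\mathbb{F}$: pushouts only produce the special quotients $\widetilde{\delta}\twoheadrightarrow\widetilde{a_*\delta}$, whereas an arbitrary quotient $\widetilde{\rho}/L$ with $\rho\in\mathbb{F}$ need not be of this form; to place it in ${\rm def}\mathbb{F}$ one must, for instance, cover the kernel by a representable, check that the induced morphism $W\oplus X^n\to X^{n+1}$ with components $w$ and $d^n_\rho$ is a deflation, and then use closedness of $\mathbb{F}$ (composition of $\mathbb{F}$-deflations) to see it is an $\mathbb{F}$-deflation --- an ingredient absent from your argument. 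Likewise, your extension-closure rests on the unproved converse you yourself flag: realizing a given extension $0\to G'\to G\to G''\to 0$ as the defect sequence of a composite whose two factors have defects \emph{exactly} $G'$ and $G''$ (in (ii) the subobject is only a quotient of $\widetilde{\delta_g}$). This can in fact be done without weak pullbacks (pull the deflation realizing $G'$ back along a Yoneda lift of the isomorphism, and use the covering trick for $G''$), but your proposal contains no such construction. The paper avoids all of this with a different idea: it proves ${\rm eff}\mathbb{E}={\rm def}\mathbb{E}$ (Proposition 4.3, following Ogawa) and applies it to the $n$-exangulated structure $(\C,\mathbb{F},\mathfrak{s}|_{\mathbb{F}})$, for which quotient- and extension-closure of effaceable functors are essentially formal consequences of composing $\mathbb{F}$-deflations. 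You should either import that effaceability argument or supply the constructions above; as written, the ``$\mathbb{F}$ closed $\Rightarrow{\rm def}\mathbb{F}$ Serre'' half is not yet a proof.
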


Note that, in \cite{Eno}, one of the key arguments in the proof is that any  $\mathbb{E}$-triangle $$\xymatrix{A\ar[r]^x&B\ar[r]^{y}&C\ar@{-->}[r]^{\delta}&}$$ gives rise to a weak pullback for any morphism $c:C'\rightarrow C$ (see \cite[Lemma 2.5]{Eno}) and its proof relies on the fact that any extriangulated category has shifted octahedrons, while in our general context we do not have this fact and therefore must avoid this kind of arguments.

This article is organized as follows. In Section 2, we review some elementary definitions and facts on $n$-exangulated categories.
In Section 3, we prove an $n$-exangulated version of Auslander's defect formula and Auslander-Reiten duality formula (see Theorem \ref{thm4}). As an application, we give a method to decide whether an $(n+2)$-angulated category has Auslander-Reiten $(n+2)$-angles. In Section 4, we give the proof of Theorem \ref{main47}, and some applications are given.

\section{Preliminaries}\label{section2}
In this section, we first review some definitions and basic facts.

\noindent{\bf 2.1 Morphism categories.} Let $\C$ be an additive category. The {\it morphism category} of $\C$ is the category ${\rm Mor}\C$ defined by the following data. The objects of ${\rm Mor}\C$ are all the morphisms $f:X\rightarrow Y$ in $\C$. The morphism from $f:X\rightarrow Y$ to $f': X'\rightarrow Y'$ are pairs $(a,b)$ where $a: X\rightarrow X'$ and $b: Y\rightarrow Y'$ satisfies $bf=f'a$. The composition of morphisms is componentwise.

\noindent{\bf 2.2 Functor categories.} For an additive category $\C$, a {\it right $\C$-module} is defined to be a contravariant functor $\C^{op}\rightarrow \Ab$ and a morphism between right $\C$-modules is just a natural transformation of functors. Thus we define an abelian category ${\rm Mod}\C$. We denote by ${\rm mod}\C$ the full subcategory of finitely presented $\C$-module in ${\rm Mod}\C$ and by $\mathrm{proj}\text{-}\C$ (resp. $\mathrm{inj}\text{-}\C$) the full subcategory of ${\rm mod}\C$ consisting of
projective (resp. injective) objects. For an additive category $\C$, the subcategory ${\rm mod}\C$ is not always abelian since it is not necessarily closed under kernels. Indeed, we have following well-known result.

\begin{lemma} Let $\C$ be an additive category. Then the full subcategory ${\rm mod}\C$ is an exact abelian subcategory in ${\rm Mod}\C$ if and only if the  category $\C$ admits weak-kernels.
\end{lemma}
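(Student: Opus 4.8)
The plan is to prove both implications of this standard equivalence, which is essentially due to Auslander. Recall that ${\rm mod}\,\C \subseteq {\rm Mod}\,\C$ is always closed under cokernels and finite direct sums, and contains the representable functors $\C(-,X)$, which are projective generators of ${\rm Mod}\,\C$. So the content of the statement is about closure under kernels: ${\rm mod}\,\C$ is an abelian subcategory of ${\rm Mod}\,\C$ (with the inclusion exact) precisely when the kernel in ${\rm Mod}\,\C$ of a morphism between finitely presented functors is again finitely presented.

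First I would prove the "if" direction. Suppose $\C$ has weak-kernels, and let $\varphi\colon F\to G$ be a morphism in ${\rm mod}\,\C$. Choose a presentation $\C(-,X_1)\xrightarrow{\C(-,f)}\C(-,X_0)\to F\to 0$. The composite $\C(-,X_0)\to F\xrightarrow{\varphi}G$ factors, after possibly enlarging $X_0$, through a presentation of $G$; using Yoneda's lemma this composite is induced by a morphism $g\colon X_0\to Y_0$ where $\C(-,Y_1)\xrightarrow{\C(-,h)}\C(-,Y_0)\to G\to 0$ presents $G$. The key point is a diagram chase showing that $\ker\varphi$ is the cokernel of a map between representables built from $f$, a lift of $g$ along $h$ (which exists by projectivity of $\C(-,X_1)$ after passing to a presentation, or more carefully by the standard argument), and, crucially, a weak-kernel of the relevant composite morphism in $\C$. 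Concretely, one shows $\ker\varphi$ is presented by $\C(-,Z)\to\C(-,X_0\oplus Y_1)\to\ker\varphi\to 0$ for a suitable object $Z$ obtained by iterating weak-kernels; hence $\ker\varphi\in{\rm mod}\,\C$. Since ${\rm mod}\,\C$ is closed under kernels and cokernels in the abelian category ${\rm Mod}\,\C$, it is an exact abelian subcategory.

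For the "only if" direction, suppose ${\rm mod}\,\C$ is an exact abelian subcategory of ${\rm Mod}\,\C$. Let $f\colon X\to Y$ be any morphism in $\C$; I want to produce a weak-kernel. Consider $\C(-,f)\colon\C(-,X)\to\C(-,Y)$, a morphism between representable, hence finitely presented, functors. By hypothesis its kernel $K$ in ${\rm Mod}\,\C$ lies in ${\rm mod}\,\C$, so there is an epimorphism $\pi\colon\C(-,W)\twoheadrightarrow K$ from a representable functor. Composing with the inclusion $K\hookrightarrow\C(-,X)$ gives a natural transformation $\C(-,W)\to\C(-,X)$, which by Yoneda is $\C(-,w)$ for a unique $w\colon W\to X$. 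One then checks directly that $w$ is a weak-kernel of $f$: the composite $f\circ w$ corresponds to $\C(-,X)\supseteq K\to\C(-,Y)$ which is zero, so $fw=0$; and given any $t\colon T\to X$ with $ft=0$, the element $t\in\C(T,X)$ lies in $K(T)=\ker\big(\C(T,X)\to\C(T,Y)\big)$, hence is in the image of $\pi_T\colon\C(T,W)\to K(T)$, i.e. $t=w\circ s$ for some $s\colon T\to W$. This is exactly the defining property of a weak-kernel.

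The main obstacle is the "if" direction, specifically the bookkeeping in the diagram chase that identifies $\ker\varphi$ with the cokernel of an explicitly constructed morphism between representables. The subtlety is that a morphism $F\to G$ between finitely presented functors need not lift to a morphism of chosen presentations on the nose; one must first replace the given presentation of $G$ (or enlarge that of $F$) so that a lift exists, and then iterate: the projective cover step uses weak-kernels once to handle the map on the level of $X_0$ and $Y_0$, and a second application of weak-kernels to handle the resulting syzygy. Since this lemma is classical (it is the "well-known result" the authors cite without proof — Auslander, or Freyd's theory of the "Freyd category"), I would keep the proof brief, emphasizing the Yoneda dictionary and the single essential use of weak-kernels in each direction, and refer the reader to \cite{A1} or standard references for the full diagrammatic details.
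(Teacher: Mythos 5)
The paper gives no proof of this lemma at all -- it is quoted as a ``well-known result'' (it goes back to Auslander/Freyd) -- so there is no in-paper argument to compare against; your proposal follows the standard route, and your ``only if'' direction is complete and correct as written: the kernel $K$ of $\C(-,f)$ is finitely presented by hypothesis, an epimorphism $\C(-,W)\twoheadrightarrow K$ followed by $K\hookrightarrow\C(-,X)$ corresponds under Yoneda to $w\colon W\to X$, and the weak-kernel property of $w$ is exactly the surjectivity of $\C(T,W)\to K(T)$ for all $T$.

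In the ``if'' direction, however, the explicit formula you state is wrong: $\ker\varphi$ is in general \emph{not} a quotient of $\C(-,X_0\oplus Y_1)$, so a presentation of the form $\C(-,Z)\to\C(-,X_0\oplus Y_1)\to\ker\varphi\to 0$ cannot be correct. The correct bookkeeping is the following. Given $\varphi\colon F\to G$ with presentations $\C(-,X_1)\xrightarrow{\C(-,x)}\C(-,X_0)\to F\to0$ and $\C(-,Y_1)\xrightarrow{\C(-,y)}\C(-,Y_0)\to G\to0$, a lift $(g_0,g_1)$ with $yg_1=g_0x$ always exists by projectivity of representables -- no enlargement of $X_0$ or replacement of the presentation of $G$ is needed, contrary to what you suggest. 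Then an element $[t]\in F(T)$ lies in $(\ker\varphi)(T)$ iff $g_0t=ys$ for some $s\colon T\to Y_1$, so if $w=(w',w'')\colon Z_1\to X_0\oplus Y_1$ is a weak kernel of $(g_0,-y)\colon X_0\oplus Y_1\to Y_0$, the composite $\C(-,Z_1)\xrightarrow{\C(-,w')}\C(-,X_0)\to F$ maps onto $\ker\varphi$; this shows $\ker\varphi$ is a quotient of $\C(-,Z_1)$, not of $\C(-,X_0\oplus Y_1)$. The relations require the second application of weak kernels: the kernel of $\C(-,Z_1)\twoheadrightarrow\ker\varphi$ coincides with the kernel of $\C(-,Z_1)\to F$ (as $\ker\varphi\to F$ is monic), and by the same argument it is the image of $\C(-,Z_2)\to\C(-,Z_1)$ where $Z_2$ is a weak kernel of $(w',-x)\colon Z_1\oplus X_1\to X_0$. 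Hence $\C(-,Z_2)\to\C(-,Z_1)\to\ker\varphi\to0$ is the desired presentation, with weak-kernel objects in \emph{both} spots. With this correction (and since $\mathrm{mod}\,\C$ is always closed under cokernels in $\mathrm{Mod}\,\C$), your argument is the standard and complete one.
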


\noindent{\bf 2.3 Quotient categories.} Let $\C$ be an additive category and $\mathcal{J}$ an ideal of $\C$. The {\it quotient category}  $\C/\mathcal{J}$ has the same objects as $\C$, and the morphisms in $\C/\mathcal{J}$ are the residue classes of homomorphisms in $\C$ modulo $\mathcal{J}$. Suppose that $\X$ is a full subcategory of $\C$, we denote by $[\X]$ the ideal of $\C$ given by all morphisms factors through an object in $\X$, thus we have a quotient category $\C/[\X]$.

Let $F: \C\rightarrow \mathcal{D}$ be an additive functor. An object $A$ of $\C$ is called a {\it kernel object} for $F$ provided $F(A)=0$. The functor $F: \C\rightarrow \mathcal{D}$ will be said to be {\it objective} provided any morphism $f: A\rightarrow A'$ with $F(f)=0$ factors through a kernel object for $F$, see \cite[Appendix]{RZ}. If $F: \C\rightarrow \mathcal{D}$ is a full, dense, objective functor, there is an equivalence $\C/[{\rm Ker}F]\cong \mathcal{D}$, where ${\rm Ker}F$ is the full subcategory of $\C$ formed by $X$ with $F(X)=0$.

\noindent{\bf 2.4 $n$-exangulated categories.} Let $n$ be a positive integer and $\C$  an additive category equipped with an additive bifunctor $\E\colon\C\op\times\C\to {\Ab}$, where ${\Ab}$ is the category of abelian groups. Next we briefly recall some definitions and basic properties of $n$-exangulated categories from \cite{HLN}.

{ For any pair of objects $A,C\in\C$, an element $\del\in\E(C,A)$ is called an {\it $\E$-extension} or simply an {\it extension}. We also write such $\del$ as $\tensor[_A]{\del}{_C}$ when we indicate $A$ and $C$. The zero element $\tensor[_A]{0}{_C}=0\in\E(C,A)$ is called the {\it split $\E$-extension}. For any pair of $\E$-extensions $\tensor[_A]{\del}{_C}$ and $\tensor[_{A'}]{{{\delta}{'}}}{_{C'}}$, let $\delta\oplus \delta'\in\mathbb{E}(C\oplus C', A\oplus A')$ be the
element corresponding to $(\delta,0,0,{\delta}{'})$ through the natural isomorphism $\mathbb{E}(C\oplus C', A\oplus A')\simeq\mathbb{E}(C, A)\oplus\mathbb{E}(C, A')
\oplus\mathbb{E}(C', A)\oplus\mathbb{E}(C', A')$.

For any $a\in\C(A,A')$ and $c\in\C(C',C)$,  $\E(C,a)(\del)\in\E(C,A')\ \ \text{and}\ \ \E(c,A)(\del)\in\E(C',A)$ are simply denoted by $a_{\ast}\del$ and $c^{\ast}\del$, respectively.

Let $\tensor[_A]{\del}{_C}$ and $\tensor[_{A'}]{{{\delta}{'}}}{_{C'}}$ be any pair of $\E$-extensions. A {\it morphism} $(a,c)\colon\del\to{\delta}{'}$ of extensions is a pair of morphisms $a\in\C(A,A')$ and $c\in\C(C,C')$ in $\C$, satisfying the equality
$a_{\ast}\del=c^{\ast}{\delta}{'}$.}

\begin{definition}\cite[Definition 2.11]{HLN}
By  the Yoneda lemma, any extension $\del\in\E(C,A)$ induces natural transformations
\[ \del\ssh\colon\C(-,C)\Rightarrow\E(-,A)\ \ \text{and}\ \ \del\ush\colon\C(A,-)\Rightarrow\E(C,-). \]
For any $X\in\C$, these $(\del\ssh)_X$ and $\del\ush_X$ are given as follows.
\begin{enumerate}
\item[\rm(1)] $(\del\ssh)_X\colon\C(X,C)\to\E(X,A)\ ;\ f\mapsto f\uas\del$.
\item[\rm (2)] $\del\ush_X\colon\C(A,X)\to\E(C,X)\ ;\ g\mapsto g\sas\delta$.
\end{enumerate}
\end{definition}

\begin{definition}\cite[Definition 2.7]{HLN}
Let $\bf{C}_{\C}$ be the category of complexes in $\C$. As its full subcategory, define $\CC$ to be the category of complexes in $\C$ whose components are zero in the degrees outside of $\{0,1,\ldots,n+1\}$. Namely, an object in $\CC$ is a complex $X^{\mr}=\{X^i,d_X^i\}$ of the form
\[ X^0\xrightarrow{d_X^0}X^1\xrightarrow{d_X^1}\cdots\xrightarrow{d_X^{n-1}}X^n\xrightarrow{d_X^n}X^{n+1}. \]
We write a morphism $f^{\mr}\co X^{\mr}\to Y^{\mr}$ { simply as} $f^{\mr}=(f^0,f^1,\ldots,f^{n+1})$, only indicating the terms of degrees $0,\ldots,n+1$.

 We define the homotopy relation on the morphism sets in the usual way. Denote by $\mathbf{K}_{\C}^{n+2}$ the homotopy category, which is the quotient of $\CC$ by the ideal of null-homotopic morphisms.
\end{definition}

\begin{definition}\cite[Definitions 2.9 and 2.13]{HLN}
 Let $\C,\E,n$ be as before. Define a category $\AE:=\AE^{n+2}_{(\C,\E)}$ as follows.
\begin{enumerate}
\item[\rm(1)]  A pair $\Xd$ is an object of the category $\AE$ with $X^{\mr}\in\CC$
and $\del\in\E(X^{n+1},X^0)$, called an {\it $\E$-attached
complex of length} $n+2$, if it satisfies
$$(d^0_X)_{\ast}\del=0~~\textrm{and}~~(d_X^n)^{\ast}\del=0.$$
We also denote it by
$$X^0\xrightarrow{d^0_X}X^1\xrightarrow{d^1_X}\cdots\xrightarrow{d^{n-2}_X}X^{n-1}
\xrightarrow{d^{n-1}_X}X^n\xrightarrow{d^n_X}X^{n+1}\overset{\delta}{\dashrightarrow}$$
\item[\rm (2)]  For such pairs $\Xd$ and $\langle Y^{\mr},\rho\rangle$, a morphism  $f^{\mr}\colon\Xd\to\langle Y^{\mr},\rho\rangle$ in $\AE$ is
defined to be a morphism in $\CC$ satisfying $(f^0)_{\ast}\del=(f^{n+1})^{\ast}\rho$.
{ We use the same composition and the identities as in $\CC$.}
\end{enumerate}

An {\it $n$-exangle} is a pair  $\Xd$ of $X^{\mr}\in\CC$ and $\delta\in\mathbb{E}(X^{n+1},X^0)$ which satisfies the following conditions.
\begin{enumerate}
\item[\rm (a)] The following sequence of functors $\C\op\to\Ab$ is exact.
$$
\C(-,X^0)\xLongrightarrow{\C(-,\ d_X^0)}\cdots\xLongrightarrow{\C(-,\ d_X^n)}\C(-,X^{n+1})\xLongrightarrow{~\del\ssh~}\E(-,X^0)
$$
\item[\rm (b)] The following sequence of functors $\C\to\Ab$ is exact.
$$
\C(X^{n+1},-)\xLongrightarrow{\C(d_X^n,\ -)}\cdots\xLongrightarrow{\C(d_X^0,\ -)}\C(X^0,-)\xLongrightarrow{~\del\ush~}\E(X^{n+1},-)
$$
\end{enumerate}
In particular any $n$-exangle is an object in $\AE$.
A {\it morphism of $n$-exangles} simply means a morphism in $\AE$. Thus $n$-exangles form a full subcategory of $\AE$.
\end{definition}

{ \begin{definition}\cite[Definition 2.17]{HLN}
Let $A,C\in{\C}$ be any pair of objects. The subcategory of $\CC$, denoted by $\mathbf{C}^{n+2}_{(\C;A,C)}$, or simply
by $\mathbf{C}^{n+2}_{(A,C)}$, is defined as follows.

\begin{enumerate}
\item[\rm(1)] An object $X^{\mr}\in{\CC}$ is in $\mathbf{C}^{n+2}_{(A,C)}$ if it satisfies $X^0=A$ and $X^{n+1}=C$. We also write it as $_{A}X^{\mr}_{C}$ when we emphasize $A$ and $C$.
\item[\rm(2)] For any $X^{\mr}, Y^{\mr}\in{\mathbf{C}^{n+2}_{(A,C)}}$, the morphism set is defined by
    $$\mathbf{C}^{n+2}_{(A,C)}(X^{\mr},Y^{\mr})=\{f^{\mr}\in{\CC(X^{\mr},Y^{\mr})}\ | \ f^{0}=1_{A}, \ f^{n+1}=1_{C}\}.$$
\end{enumerate}

We denote by $\mathbf{K}_{(A,C)}^{n+2}$ the quotient of $\mathbf{C}^{n+2}_{(A,C)}$
 by the same homotopy relation as $\mathbf{C}^{n+2}_{\C}$. The homotopy equivalence class of
$_{A}X^{\mr}_{C}$ is denoted by $[_{A}X^{\mr}_{C}]$, or simply by $[X^{\mr}]$.
\end{definition}}

{ \begin{remark}\cite[Remark 2.18]{HLN}
 Let $X^{\mr}, Y^{\mr}\in{\mathbf{C}^{n+2}_{(A,C)}}$ be any pair of objects. If a morphism $f^{\mr}\in\mathbf{C}^{n+2}_{(A,C)}(X^{\mr},Y^{\mr})$ gives a homotopy equivalence in $\mathbf{C}^{n+2}_{\C}$, then it is a homotopy equivalence in $\mathbf{C}^{n+2}_{(A,C)}$. However, the converse is not true in general. Thus there can be a difference between homotopy equivalences taken in ${\mathbf{C}^{n+2}_{(A,C)}}$ and $\mathbf{C}^{n+2}_{\C}$.
\end{remark}}

\begin{definition}\cite[Definition 2.22]{HLN}\label{def1}
Let $\s$ be a correspondence which associates a homotopy equivalence class $\s(\del)=[{}_AX^{\mr}_C]$ to each extension $\del={ \tensor[_A]{\delta}{_C}}$. Such { an} $\s$ is called a {\it realization} of $\E$ if it satisfies the following condition for any $\s(\del)=[X^{\mr}]$ and any $\s(\rho)=[Y^{\mr}]$.
\begin{itemize}
\item[{\rm (R0)}] For any morphism of extensions $(a,c)\co\del\to\rho$, there exists a morphism $f^{\mr}\in\CC(X^{\mr},Y^{\mr})$ of the form $f^{\mr}=(a,f^1,\ldots,f^n,c)$. Such { an} $f^{\mr}$ is called a {\it lift} of $(a,c)$.
\end{itemize}
In such a case, we { simply} say that \lq\lq$X^{\mr}$ realizes $\del$" whenever they satisfy $\s(\del)=[X^{\mr}]$.

Moreover, a realization $\s$ of $\E$ is said to be {\it exact} if it satisfies the following conditions.
\begin{itemize}
\item[{\rm (R1)}] For any $\s(\del)=[X^{\mr}]$, the pair $\Xd$ is an $n$-exangle.
\item[{\rm (R2)}] For any $A\in\C$, the zero element ${ \tensor[_A]{0}{_0}}=0\in\E(0,A)$ satisfies
\[ \s({ \tensor[_A]{0}{_0}})=[A\ov{1_A}{\lra}A\to0\to\cdots\to0\to0]. \]
Dually, $\s({ \tensor[_0]{0}{_A}})=[0\to0\to\cdots\to0\to A\ov{1_A}{\lra}A]$ holds for any $A\in\C$.
\end{itemize}
Note that the above condition {\rm (R1)} does not depend on representatives of the class $[X^{\mr}]$.
\end{definition}

\begin{definition}\cite[Definition 2.23]{HLN}
Let $\s$ be an exact realization of $\E$.
\begin{enumerate}
\item[\rm (1)] An $n$-exangle $\Xd$ is called an $\s$-{\it distinguished} $n$-exangle if it satisfies $\s(\del)=[X^{\mr}]$. We often simply say {\it distinguished $n$-exangle} when $\s$ is clear from the context.
\item[\rm (2)]  An object $X^{\mr}\in\CC$ is called an {\it $\s$-conflation} or simply a {\it conflation} if it realizes some extension $\del\in\E(X^{n+1},X^0)$.
\item[\rm (3)]  A morphism $f$ in $\C$ is called an {\it $\s$-inflation} or simply an {\it inflation} if it admits some conflation $X^{\mr}\in\CC$ satisfying $d_X^0=f$.
\item[\rm (4)]  A morphism $g$ in $\C$ is called an {\it $\s$-deflation} or simply a {\it deflation} if it admits some conflation $X^{\mr}\in\CC$ satisfying $d_X^n=g$.
\end{enumerate}
\end{definition}

\begin{definition}\cite[Definition 2.27]{HLN}
For a morphism $f^{\mr}\in\CC(X^{\mr},Y^{\mr})$ satisfying $f^0=1_A$ for some $A=X^0=Y^0$, its {\it mapping cone} $M_f^{\mr}\in\CC$ is defined to be the complex
\[ X^1\xrightarrow{d_{M_f}^0}X^2\oplus Y^1\xrightarrow{d_{M_f}^1}X^3\oplus Y^2\xrightarrow{d_{M_f}^2}\cdots\xrightarrow{d_{M_f}^{n-1}}X^{n+1}\oplus Y^n\xrightarrow{d_{M_f}^n}Y^{n+1} \]
where $d_{M_f}^0=\begin{bmatrix}-d_X^1\\ f^1\end{bmatrix},$
$d_{M_f}^i=\begin{bmatrix}-d_X^{i+1}&0\\ f^{i+1}&d_Y^i\end{bmatrix}\ (1\le i\le n-1),$
$d_{M_f}^n=\begin{bmatrix}f^{n+1}&d_Y^n\end{bmatrix}$.

{\it The mapping cocone} is defined dually, for morphisms $h^{\mr}$ in $\CC$ satisfying $h^{n+1}=1$.
\end{definition}

\begin{definition}\label{definition:2.10}\cite[Definition 2.32]{HLN}
An {\it $n$-exangulated category} is a triplet $(\C,\E,\s)$ of additive category $\C$, additive bifunctor $\E\co\C\op\times\C\to\Ab$, and its exact realization $\s$, satisfying the following conditions.
\begin{itemize}[leftmargin=4em]
\item[{\rm (EA1)}] Let $A\ov{f}{\lra}B\ov{g}{\lra}C$ be any sequence of morphisms in $\C$. If both $f$ and $g$ are inflations, then so is $g f$. Dually, if $f$ and $g$ are deflations then so is $g f$.

\item[{\rm (EA2)}] For $\rho\in\E(D,A)$ and $c\in\C(C,D)$, let ${}_A\langle X^{\mr},c\uas\rho\rangle_C$ and ${}_A\Yr_D$ be distinguished $n$-exangles. Then $(1_A,c)$ has a {\it good lift} $f^{\mr}$, in the sense that its mapping cone gives a distinguished $n$-exangle $\langle M^{\mr}_f,(d_X^0)\sas\rho\rangle$.
\item[{\rm (EA2$\op$)}] Dual of {\rm (EA2)}.
\end{itemize}
Note that the case $n=1$, a triplet $(\C,\E,\s)$ is a  $1$-exangulated category if and only if it is an extriangulated category, see \cite[Proposition 4.3]{HLN}.
\end{definition}

\begin{example}
From \cite[Proposition 4.34]{HLN} and \cite[Proposition 4.5]{HLN},  we know that $n$-exact categories and $(n+2)$-angulated categories are $n$-exangulated categories.
There are some other examples of $n$-exangulated categories
 which are neither $n$-exact nor $(n+2)$-angulated, see \cite[Remark 4.39]{HLN}.
\end{example}

\begin{definition} \cite[Definition 3.2]{LZ}
Let $(\C,\E,\s)$ be an $n$-exangulated category.

(1) An object $P\in \C$ is called {\it projective} if, for any distinguished $n$-exangle
$$X^0\xrightarrow{d^0_X}X^1\xrightarrow{d^1_X}\cdots\xrightarrow{}X^{n-1}
\xrightarrow{d^{n-1}_X}X^n\xrightarrow{d^n_X}X^{n+1}\overset{\delta}{\dashrightarrow}$$
and any morphism $c$ in $\C(P, X^{n+1})$, there exists a morphism $b\in \C(P, X^n)$ satisfying
$d^n_{X}b = c$. We denote the full subcategory of projective objects in $\C$ by $\mathcal{P}$. Dually,
the full subcategory of injective objects in $\C$ is denoted by $\mathcal{I}$.
We denote by $\underline{\C}=\C/[\mathcal{P}]$ and $\overline{\C}=\C/[\mathcal{I}]$.

(2) We say that $\C$ has {\it enough projectives} if for any object $C\in\C$, there exists a distinguished $n$-exangle
$$A\xrightarrow{d_P^0}P^1\xrightarrow{d_P^1}\cdots\xrightarrow{}P^{n-1}
\xrightarrow{d_P^{n-1}}P^n\xrightarrow{d_P^n}C\overset{\delta}{\dashrightarrow}$$
satisfying $P^1 , P^2 ,\cdots, P^n\in \mathcal{P}$. We can define the notion of having {\it enough injectives}
dually.
\end{definition}

\section{\bf Auslander's defect formula over $n$-exangulated categories}
In this section, we introduce the defect of a distinguished $n$-exangle, which is an analogue to the defect of an $n$-exact sequence \cite[Definition 3.1]{JK}.  We prove an $n$-exangulated version of Auslander's defect formula. As an application, we obtain an $n$-exangulated version of  Auslander-Reiten duality formula, which provides a method to decide whether an $(n+2)$-angulated category has Auslander-Reiten $(n+2)$-angles.

In order to prove our main result Theorem \ref{thm4}, we need to make some preparations.

 We denote by $\mathfrak{s}$-${\rm def}\C$ the full subcategory of Mor$\C$ consisting of $\mathfrak{s}$-deflations.
   The full subcategory of $\mathfrak{s}$-${\rm def}\C$ consisting of split epimorphisms is denoted by s-epi$\C$.

 Recall that if
  $$X^{0}\xrightarrow{d_X^{0}} X^{1}\xrightarrow{d_X^{1}} \cdots\xrightarrow{} X^{n-1}\rightarrow X^{n}\xrightarrow{d_X^n} X^{n+1}\stackrel{\delta}{\dashrightarrow},$$
is a distinguished $n$-exangle, then the contravariant defect of $\delta$, denoted by $\widetilde{\delta}$, is defined by
$$\C(-, X^{n})\xrightarrow{\C(-, d_X^n)} \C(-, X^{n+1})\rightarrow \widetilde{\delta}\rightarrow 0 $$ the exact sequence of the functors. Dually, the covariant defect of $\delta$, denoted by $\check{\delta}$, is defined by the exact sequence of functors
$$\C(X^1, -)\xrightarrow{\C(d_X^0,-)}\C(X^{0},-)\rightarrow \check{\delta}\rightarrow 0.$$
It follows from \cite[Lemma 3.5(1)]{HLN} that $\check{\delta}=\mathrm{Ker}(\E(X^{n+1},- )\xrightarrow{(d_X^n)^*}\E(X^n, -))$.

Pick an object $X^{n}\xrightarrow{d_X^n} X^{n+1}$ of $\mathfrak{s}$-${\rm def}\C$. We have a distinguished $n$-exangle
   $$X^{0}\xrightarrow{d_X^{0}} X^{1}\xrightarrow{d_X^{1}} \cdots\xrightarrow{} X^{n-1}\xrightarrow{d_X^{n-1}} X^{n}\xrightarrow{d_X^n} X^{n+1}\stackrel{\delta}{\dashrightarrow},$$
   which induces the exact sequence
   $$\C(-, X^{0})\xrightarrow{\C(-, d_X^0)}\C(-, X^1)\rightarrow\cdots\rightarrow\C(-, X^{n})\xrightarrow{\C(-, d_X^n)} \C(-, X^{n+1})\rightarrow \widetilde{\delta}\rightarrow 0.$$  Note that $\widetilde{\delta}$ vanishes on projectives by  \cite[Lemma 3.4]{LZ}, and so $\widetilde{\delta}\in {\rm mod}\underline{\C}$ by \cite[Proposition 2.4]{L1}. In fact, the following sequence $\underline{\C}(-, X^{n})\xrightarrow{\underline{\C}(-, \underline{d}_X^n)} \underline{\C}(-, X^{n+1})\rightarrow \widetilde{\delta}\rightarrow 0$ is also exact by \cite[Lemma 1.27]{INP}, and so $\widetilde{\delta}\in {\rm mod}\underline{\C}$.
\begin{lemma}\label{prop3} Let $(\C, \mathbb{E}, \mathfrak{s})$ be an $n$-exangulated category with enough projectives. Then there is a functor $\Theta:\mathfrak{s}$-{\rm def} $\C\longrightarrow \mathrm{mod}\underline{\C}$.
\begin{proof}
We define $\Theta:\mathfrak{s}$-${\rm def}$$\C\longrightarrow \mathrm{mod}\underline\C$ by setting $\Theta(X^{n}\xrightarrow{d_X^n}X^{n+1})=\widetilde{\delta}$ by the above statement. We now show that $\Theta$ is well-defined.
Let $d_X^n: X^n\rightarrow X^{n+1}$ be an $\mathfrak{s}$-deflation.
Then we have an distinguished $n$-exangle
$$X^{0}\xrightarrow{d_X^{0}} X^{1}\xrightarrow{d_X^{1}} \cdots\xrightarrow{} X^{n-1}\xrightarrow{d_X^{n-1}} X^{n}\xrightarrow{d_X^n} X^{n+1}\stackrel{\delta}{\dashrightarrow},$$
   which induces the exact sequence
$\underline{\C}(-, X^{n})\xrightarrow{\underline{\C}(-, \underline{d}_X^n)} \underline{\C}(-, X^{n+1})\rightarrow \widetilde{\delta}\rightarrow 0$.
Thus $\Theta(X^{n}\xrightarrow{d_X^n}X^{n+1})=\mathrm{Coker}(\underline{\C}(-, X^{n})\xrightarrow{\underline{\C}(-, \underline{d}_X^n)} \underline{\C}(-, X^{n+1})).$ Hence we deduce that the definition of $\Theta$ is independent of the choice of the distinguished $n$-exangles whose $\mathfrak{s}$-deflation is $X^{n}\xrightarrow{d^n_X}X^{n+1}$.

Now let $X^n\xrightarrow{d_X^n}X^{n+1}$ and $Y^n\xrightarrow{d_Y^n}Y^{n+1}$ be two objects of $\mathfrak{s}$-${\rm def}\C$ and consider a commutative diagram
$$\xymatrix{
    X^{n} \ar[d]_{f^n} \ar[r]^{d_X^n} &X^{n+1} \ar[d]^{f^{n+1}} \\
    Y^n \ar[r]^{d_Y^n} & Y^{n+1}.}$$
    Therefore $(f^{n}, f^{n+1})$ lifts to the following morphism of distinguished $n$-exangles by the dual of \cite[Proposition 3.6(1)]{HLN}
 $$\xymatrix{
 X^{0}\ar[r]^{d_X^{0}}\ar@{-->}[d]_{f^{0}}&X^{1}\ar[r]^{d_X^{1}}\ar@{-->}[d]_{f^{1}}&X^{2}\ar[r]^{d_X^{2}}\ar@{-->}[d]_{f^{2}}&\cdots\ar[r]^{}
 &X^{n-1}\ar[r]^{d_X^{n-1}}\ar@{-->}[d]_{f^{n-1}}&X^{n}\ar[r]^{d_X^n}\ar[d]_{f^{n}}&X^{n+1}\ar@{-->}[r]^{\delta}\ar[d]_{f^{n+1}}&
\\
 Y^{0}\ar[r]^{d_Y^{0}}&Y^{1}\ar[r]^{d_Y^{1}}&Y^{2}\ar[r]^{d_Y^{2}}&\cdots\ar[r]^{}&Y^{n-1}\ar[r]^{d_Y^{n-1}}&Y^{n}\ar[r]^{d_Y^n}&Y^{n+1}\ar@{-->}[r]^{\rho}&.}$$  Hence we have the following commutative diagram with exact rows
 $$\xymatrix{
\underline{\C}(-, X^{n})\ar[r]^{\underline{\C}(-, \underline{d}_X^n)}\ar[d]_{\underline{\C}(-, \underline{f}^n)}&\underline{\C}(-, X^{n+1})\ar[r]^{}\ar[d]^{\underline{\C}(-, \underline{f}^{n+1})}&\widetilde{\delta}\ar[r]\ar[d]^{\widetilde{f}}&0
\\
\underline{\C}(-, Y^{n})\ar[r]^{\underline{\C}(-, \underline{d}_Y^n)}&\underline{\C}(-, Y^{n+1})\ar[r]^{}&\widetilde{\rho}\ar[r]&0.}$$
We set $\Theta(f^n,f^{n+1})=\widetilde{f}$.
Since $\Theta(X^{n}\xrightarrow{d_X^n}X^{n+1})=\widetilde{\delta}=\mathrm{Coker}(\underline{\C}(-, X^{n})\xrightarrow{\underline{\C}(-, \underline{d}_X^n)} \underline{\C}(-, X^{n+1}))$ and
$\Theta(Y^{n}\xrightarrow{d_Y^n}Y^{n+1})=\widetilde{\rho}=\mathrm{Coker}(\underline{\C}(-, Y^{n})\xrightarrow{\underline{\C}(-, \underline{d}_Y^n)} \underline{\C}(-, Y^{n+1}))$,
the universal property of cokernel implies that $\widetilde{f}$ is independent of the lifting morphisms $\{f^{i}\}_{1\leq i\leq n}$. This completes the proof.
\end{proof}

\end{lemma}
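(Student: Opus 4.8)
The plan is to build $\Theta$ in stages --- on objects, then on morphisms, then checking functoriality --- the single technical device being the realization of the contravariant defect as the cokernel of a map between representable functors on the stable category $\underline{\C}$. First I would define $\Theta$ on objects: an object $d_X^n\colon X^n\to X^{n+1}$ of $\mathfrak{s}$-$\mathrm{def}\,\C$ is by definition a deflation, so it fits into a conflation, i.e.\ a distinguished $n$-exangle $X^0\to\cdots\to X^n\xrightarrow{d_X^n}X^{n+1}\overset{\del}{\dashrightarrow}$ for some $\del\in\E(X^{n+1},X^0)$, and I set $\Theta(d_X^n):=\widetilde{\del}$. To see this lands in $\mathrm{mod}\,\underline{\C}$ I invoke the cited facts: $\widetilde{\del}$ vanishes on $\mathcal{P}$ by \cite[Lemma 3.4]{LZ}, hence $\widetilde{\del}\in\mathrm{mod}\,\underline{\C}$ by \cite[Proposition 2.4]{L1}, and by \cite[Lemma 1.27]{INP} the sequence $\underline{\C}(-,X^n)\xrightarrow{\underline{\C}(-,\underline{d}_X^n)}\underline{\C}(-,X^{n+1})\to\widetilde{\del}\to 0$ is exact. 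That last exact sequence is the crux: it exhibits $\Theta(d_X^n)$ as $\mathrm{Coker}\bigl(\underline{\C}(-,X^n)\xrightarrow{\underline{\C}(-,\underline{d}_X^n)}\underline{\C}(-,X^{n+1})\bigr)$, which depends only on the residue class $\underline{d}_X^n$, not on the particular exangle chosen to realize $d_X^n$; this gives well-definedness on objects.

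Next I would define $\Theta$ on morphisms. A morphism $(f^n,f^{n+1})\colon d_X^n\to d_Y^n$ in $\mathfrak{s}$-$\mathrm{def}\,\C$ is a commutative square $d_Y^n f^n=f^{n+1}d_X^n$. Fixing distinguished $n$-exangles realizing $\del$ and $\rho$ with these deflations, I would lift $(f^n,f^{n+1})$ to a morphism $f^{\mr}\colon\langle X^{\mr},\del\rangle\to\langle Y^{\mr},\rho\rangle$ of distinguished $n$-exangles using the dual of \cite[Proposition 3.6(1)]{HLN}; applying $\C(-,-)$ and passing to $\underline{\C}$ then yields a commutative ladder of defect presentations, and I take $\Theta(f^n,f^{n+1})$ to be the induced map $\widetilde{f}\colon\widetilde{\del}\to\widetilde{\rho}$ on cokernels. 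Its independence from the chosen interior components $\{f^i\}_{1\le i\le n}$ is immediate from the universal property of the cokernel, since $\widetilde{f}$ is the unique arrow making the relevant square with $\underline{\C}(-,\underline{f}^{n+1})$ commute.

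Finally, functoriality ($\Theta$ preserving identities and composition) again reduces to the cokernel universal property: two candidate maps into $\widetilde{\rho}$ agree as soon as they agree after precomposition with the epimorphism $\underline{\C}(-,X^{n+1})\to\widetilde{\del}$, where they are computed from the representables and are visibly equal. The step I expect to be the real obstacle is the lifting in the morphism case --- guaranteeing that a commutative square on the \emph{last two terms} of two distinguished $n$-exangles extends to an honest morphism of $n$-exangles; this is precisely where the $n$-exangulated structure of $(\C,\E,\s)$, rather than mere functor-category bookkeeping, is used, via \cite[Proposition 3.6(1)]{HLN}. Once that lift is in hand, the remaining verifications are routine diagram chases with cokernels.
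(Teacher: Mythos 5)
Your proposal is correct and follows essentially the same route as the paper's own proof: define $\Theta$ on objects as the defect $\widetilde{\delta}$, identify it with the cokernel of $\underline{\C}(-,\underline{d}_X^n)$ using \cite[Lemma 3.4]{LZ}, \cite[Proposition 2.4]{L1} and \cite[Lemma 1.27]{INP} to get well-definedness, then lift a square $(f^n,f^{n+1})$ to a morphism of distinguished $n$-exangles via the dual of \cite[Proposition 3.6(1)]{HLN} and use the universal property of the cokernel for independence of the lift. Your extra remark that functoriality also reduces to the cokernel universal property is a routine point the paper leaves implicit.
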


\begin{lemma}\label{prop2} Let $(\C, \mathbb{E}, \mathfrak{s})$ be an $n$-exangulated category.

{\rm (1)} If $\C$ has enough projectives, then $\underline{\C}$ has weak kernels and $\mathrm{mod}\underline{\C}$ forms an abelian category.

{\rm (2)} If $\C$ has enough injectives, then $\overline{\C}$ has weak cokernels and $\mathrm{mod}(\overline{\C}^{op})$ forms an abelian category.
\end{lemma}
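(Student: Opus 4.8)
The plan is to deduce both parts from the criterion recalled in Section \ref{section2}: for an additive category $\mathcal D$, the subcategory $\mathrm{mod}\,\mathcal D$ is an exact abelian subcategory of $\mathrm{Mod}\,\mathcal D$ if and only if $\mathcal D$ has weak kernels. Applied to $\mathcal D=\underline{\C}$, this turns (1) into the statement that $\underline{\C}$ has weak kernels; applied to $\mathcal D=\overline{\C}^{\mathrm{op}}$, it turns (2) into the statement that $\overline{\C}^{\mathrm{op}}$ has weak kernels, i.e. that $\overline{\C}$ has weak cokernels. The two statements are formally dual — interchange projectives with injectives, deflations with inflations, and $\underline{\C}$ with $\overline{\C}$ — so I will only carry out (1).

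Let $\underline f\colon\underline X\to\underline Y$ be a morphism of $\underline{\C}$, lifted to $f\colon X\to Y$ in $\C$. Since $\C$ has enough projectives, there is an $\mathfrak{s}$-deflation $q\colon Q\to Y$ with $Q\in\mathcal{P}$. The first step is to check that $e:=\begin{pmatrix} f & q\end{pmatrix}\colon X\oplus Q\to Y$ is again an $\mathfrak{s}$-deflation. It factors as
$$X\oplus Q\xrightarrow{\ \left(\begin{smallmatrix} 1_X & 0\\ 0 & q\end{smallmatrix}\right)\ }X\oplus Y\xrightarrow{\ \left(\begin{smallmatrix} f & 1_Y\end{smallmatrix}\right)\ }Y,$$
where the first map is the last differential of the direct sum of the split $n$-exangle $0\to\cdots\to 0\to X\xrightarrow{1_X}X$ with a distinguished $n$-exangle realizing $q$, hence an $\mathfrak{s}$-deflation by the standard fact that finite direct sums of distinguished $n$-exangles are distinguished, while the second map is a split epimorphism, hence also an $\mathfrak{s}$-deflation; now (EA1) gives that $e$ is an $\mathfrak{s}$-deflation. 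Therefore there is a distinguished $n$-exangle
$$W^0\xrightarrow{d_W^0}W^1\to\cdots\to W^{n-1}\xrightarrow{\ g\ }X\oplus Q\xrightarrow{\ e\ }Y\overset{\theta}{\dashrightarrow},$$
and I claim that $w:=\pi_X\circ g\colon W^{n-1}\to X$, where $\pi_X\colon X\oplus Q\to X$ is the projection, represents a weak kernel $\underline w$ of $\underline f$ in $\underline{\C}$.

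Two points remain to be verified. First, $\underline f\,\underline w=0$: writing $\pi_Q$ for the projection onto $Q$, we have $e=f\pi_X+q\pi_Q$, so $fw=(f\pi_X)g=(e-q\pi_Q)g=-q\,(\pi_Q g)$ since $eg=0$; this factors through $Q\in\mathcal{P}$, whence $\underline f\,\underline w=0$. Second, the universal property: suppose $\underline h\colon\underline Z\to\underline X$ satisfies $\underline f\,\underline h=0$, so $fh=q'p'$ for some $p'\colon Z\to P'$ and $q'\colon P'\to Y$ with $P'\in\mathcal{P}$. Because $e$ is an $\mathfrak{s}$-deflation and $P'$ is projective, $q'$ lifts along $e$ to a morphism $\left(\begin{smallmatrix} a\\ b\end{smallmatrix}\right)\colon P'\to X\oplus Q$ with $fa+qb=q'$. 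Then $e\left(\begin{smallmatrix} h-ap'\\ -bp'\end{smallmatrix}\right)=fh-(fa+qb)p'=fh-q'p'=0$, so by the exactness of
$$\C(-,W^{n-1})\xrightarrow{\ \C(-,g)\ }\C(-,X\oplus Q)\xrightarrow{\ \C(-,e)\ }\C(-,Y)$$
(condition (a) in the definition of an $n$-exangle) there is $t\colon Z\to W^{n-1}$ with $g t=\left(\begin{smallmatrix} h-ap'\\ -bp'\end{smallmatrix}\right)$. Applying $\pi_X$ yields $w t=h-ap'$, and $ap'$ factors through $P'\in\mathcal{P}$, so $\underline w\,\underline t=\underline h$. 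Hence $\underline{\C}$ has weak kernels, and the criterion of Section \ref{section2} gives that $\mathrm{mod}\,\underline{\C}$ is abelian, proving (1). Part (2) is the dual: a weak cokernel of $\overline f\colon\overline X\to\overline Y$ in $\overline{\C}$ is constructed from an $\mathfrak{s}$-inflation $X\to I$ with $I\in\mathcal{I}$ by running the above argument in $\C^{\mathrm{op}}$.

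The step I expect to be the main obstacle is the universal property in the second point: one has to manufacture the factorization of $\underline h$ through $\underline w$ inside the quotient category $\underline{\C}$ using nothing more than the exactness of the represented-functor sequence attached to an $n$-exangle and the defining lifting property of projective objects. A subsidiary but genuine difficulty — and precisely the place where the $n$-exangulated setting is more delicate than the exact or triangulated ones, in which one would simply form a pullback of $q$ along $f$ or invoke an octahedral axiom — is establishing that $\begin{pmatrix} f & q\end{pmatrix}$ is an $\mathfrak{s}$-deflation; this is why the argument is routed through direct sums of distinguished $n$-exangles together with (EA1) rather than through a mapping-cone construction.
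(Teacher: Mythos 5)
Your proof is correct in substance, and it takes a genuinely different route from the paper's. The paper starts from the enough-projectives distinguished $n$-exangle $TD\to P^1\to\cdots\to P^n\xrightarrow{d_P^n}D$ ending at the codomain, realizes $f^{\ast}\delta$, lifts $(1_{TD},f)$, and invokes (EA2) so that the mapping cone gives a distinguished $n$-exangle ending in $\left(\begin{smallmatrix} f & d_P^n\end{smallmatrix}\right)\colon C\oplus P^n\to D$; it then quotes \cite[Lemma 1.27]{INP} to convert the resulting $\C(-,-)$-exact sequence into an exact sequence of $\underline{\C}(-,-)$'s, which exhibits the weak kernel. You build the same deflation $\left(\begin{smallmatrix} f & q\end{smallmatrix}\right)\colon X\oplus Q\to Y$ without (EA2), using only direct sums of distinguished $n$-exangles and (EA1), and you then verify the weak-kernel property in $\underline{\C}$ by a direct chase with condition (a) of an $n$-exangle and projectivity of $Q$ and $P'$, avoiding the citation of \cite[Lemma 1.27]{INP}. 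Your version is more self-contained and makes the stable-category step transparent; the paper's is shorter because the mapping-cone axiom and the cited lemma do that work at once.

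One justification should be repaired: the assertion that $\left(\begin{smallmatrix} f & 1_Y\end{smallmatrix}\right)$ is an $\mathfrak{s}$-deflation \emph{because} it is a split epimorphism is not a valid general principle in an $n$-exangulated category (already for the split structure on an additive category that is not weakly idempotent complete, a retraction need not be a deflation, since deflations of that structure admit kernels). For your particular morphism this is harmless: $\left(\begin{smallmatrix} f & 1_Y\end{smallmatrix}\right)=\left(\begin{smallmatrix} 0 & 1_Y\end{smallmatrix}\right)\circ\left(\begin{smallmatrix} 1_X & 0\\ f & 1_Y\end{smallmatrix}\right)$, where the second factor is an isomorphism (hence a deflation) and $\left(\begin{smallmatrix} 0 & 1_Y\end{smallmatrix}\right)$ is a deflation as the direct sum of $X\to 0$ and $1_Y$ (using \cite[Proposition 2.14]{H}, (R2) and \cite[Proposition 3.3]{HLN}), so (EA1) applies; this is exactly the device used in the proof of Proposition \ref{thm2} for the morphism $(0,1)$. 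With that substitution your argument goes through, and part (2) is indeed dual.
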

\begin{proof}
We only prove (1). Let $f: C\rightarrow D$ be a morphism in $\C$. Since $\C$ has enough projectives, we have the following commutative diagram

$$\xymatrix{
 TD\ar[r]^{d_X^0}\ar@{=}[d]_{}&X^1\ar[r]^{d_X^1}\ar[d]_{f^1}&X^{2}\ar[r]^{d_X^{2}}\ar[d]_{f^2}&\cdots\ar[r]&X^n\ar[r]^{d_X^{n}}\ar[d]_{f^{n}}&
 C\ar@{-->}[r]^{f^{*}\delta}\ar[d]_{f}&\\
 TD\ar[r]^{d_P^0}&P^{1}\ar[r]^{d_P^{1}}&P^{2}\ar[r]^{d_P^{2}}&\cdots\ar[r]&P^{n}\ar[r]^{d_P^n}&D\ar@{-->}[r]^{\delta}&}$$  such that $$X^{1}\xrightarrow{\tiny\begin{bmatrix}-d_X^{1}\\f^{1}\end{bmatrix}}X^{2}\oplus P^{1}\rightarrow\cdots\rightarrow X^{n}\oplus P^{n-1}\xrightarrow{\tiny\begin{bmatrix}-d_X^{n}&0\\f^{n}&d_P^{n-1}\end{bmatrix}}C\oplus P^{n}\xrightarrow{\tiny\begin{bmatrix}f&d_P^{n}\end{bmatrix}}D\stackrel{(d_X^0)_{*}\delta}{\dashrightarrow}$$
is a distinguished $n$-exangle with $P^{i}\in \P$ by (EA2). Hence we have an exact sequence
 $$\xymatrix{\C(-, X^{n}\oplus P^{n-1})\ar[r]&\C(-, C\oplus P^{n})\ar[r]&\C(-, D)\ar[r]&F\ar[r]&0}$$
By \cite[Lemma 1.27]{INP}, there exists an exact sequence
$$ \xymatrix@=3em{\underline{\C}(-, X^n)\ar[r]^{\underline{\C}(-, -\underline{d}_X^n)}&\underline{\C}(-, C)\ar[r]^{\underline{\C}(-, \underline{f})}&\underline{\C}(-, D)\ar[r]&F\ar[r] &0.}$$
Hence $\underline{f}: C\rightarrow D$ has weak kernels. Therefore $\mathrm{mod}\underline{\C}$ is an abelian category.
\end{proof}

\begin{proposition}\label{thm2} Let $(\C, \mathbb{E}, \mathfrak{s})$ be an $n$-exangulated category with enough projectives.
With the above notations, there exists an equivalence of abelian categories
\begin{center}
$\mathfrak{s}\text{-}\mathrm{def}\C/ [\mathrm{s}\text{-}\mathrm{epi}\C]\cong\mathrm{mod}\underline{\C}$.
\end{center}
\end{proposition}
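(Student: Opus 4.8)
The plan is to show that the functor $\Theta\colon \mathfrak{s}\text{-}\mathrm{def}\,\C \to \mathrm{mod}\,\underline\C$ constructed in Lemma \ref{prop3} is full, dense, and objective, with $\mathrm{Ker}\,\Theta = \mathrm{s\text{-}epi}\,\C$; then the last fact recalled in \textbf{2.3} Quotient categories gives the claimed equivalence $\mathfrak{s}\text{-}\mathrm{def}\,\C/[\mathrm{s\text{-}epi}\,\C] \cong \mathrm{mod}\,\underline\C$. So there are essentially four things to verify.

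\emph{Density.} Given $F \in \mathrm{mod}\,\underline\C$, pick a projective presentation $\underline\C(-,C_1) \to \underline\C(-,C_0) \to F \to 0$ in $\mathrm{mod}\,\underline\C$ coming from some $\underline f\colon C_1 \to C_0$. Using enough projectives, build a distinguished $n$-exangle with $d_X^n$ playing the role of (a morphism related to) $f$: concretely, take a deflation $P^n \to C_0$ with $P^n$ projective and form the distinguished $n$-exangle realizing an appropriate extension so that, after the standard manipulation with mapping cones as in the proof of Lemma \ref{prop2}(1), the induced sequence $\underline\C(-,\bullet) \to \underline\C(-,\bullet) \to \widetilde\delta \to 0$ coincides with the chosen presentation of $F$. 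Thus $F \cong \Theta(\text{that deflation})$. This is a routine adaptation of the extriangulated argument and of Lemma \ref{prop2}(1).

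\emph{Fullness.} Given two $\mathfrak{s}$-deflations $d_X^n\colon X^n\to X^{n+1}$, $d_Y^n\colon Y^n\to Y^{n+1}$ and a morphism $\widetilde g\colon \widetilde\delta \to \widetilde\rho$ in $\mathrm{mod}\,\underline\C$, one must produce a square $(f^n,f^{n+1})$ in $\mathfrak{s}\text{-}\mathrm{def}\,\C$ with $\Theta(f^n,f^{n+1}) = \widetilde g$. Lift $\widetilde g$ through the projective presentations: since $\underline\C(-,X^n) \to \underline\C(-,X^{n+1})$ is a projective presentation of $\widetilde\delta$ and likewise for $\widetilde\rho$, by projectivity of representable functors one gets $\underline f^{n+1}\colon X^{n+1}\to Y^{n+1}$ lifting $\widetilde g$, and then $\underline f^n\colon X^n\to Y^n$ making the square commute in $\underline\C$. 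Choosing representatives $f^n, f^{n+1}$ in $\C$, the square $d_Y^n f^n = f^{n+1} d_X^n$ commutes only up to a morphism factoring through $\mathcal P$; one absorbs this discrepancy by modifying $f^n$ using that $d_Y^n$ is a deflation and projectives lift along deflations (the defining property of $\mathcal P$). After this correction, $(f^n,f^{n+1})$ is an honest morphism in $\mathfrak{s}\text{-}\mathrm{def}\,\C$ and, by the universal property of cokernels used to define $\Theta$ on morphisms, $\Theta(f^n,f^{n+1}) = \widetilde g$.

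\emph{Objectivity and the kernel.} First identify $\mathrm{Ker}\,\Theta$: $\Theta(d_X^n) = \widetilde\delta = 0$ iff $\C(-,d_X^n)$ is epi on $\underline\C$, equivalently $\underline{d_X^n}$ is a split epimorphism in $\underline\C$; one checks this forces $d_X^n$ itself to be a split epimorphism in $\C$ (the retraction lifts up to a map through $\mathcal P$, and $\mathcal P$-objects are projective so the splitting can be rectified), i.e. $\mathrm{Ker}\,\Theta = \mathrm{s\text{-}epi}\,\C$; the objects of $\mathrm{s\text{-}epi}\,\C$ are exactly the kernel objects for $\Theta$. For objectivity, suppose $\Theta(f^n,f^{n+1}) = 0$; chasing the commutative diagram with exact rows from Lemma \ref{prop3}, the map $\underline\C(-,\underline f^{n+1})$ factors through $\underline\C(-,\underline d_Y^n)$, which by Yoneda means $\underline f^{n+1} = \underline d_Y^n \circ \underline h$ for some $h\colon X^{n+1}\to Y^n$; then the morphism $(f^n,f^{n+1})$ in $\mathrm{Mor}\,\C$ factors, up to $\mathcal P$, through the split epimorphism object $\mathrm{id}\colon Y^{n+1}\to Y^{n+1}$ (or a suitable split epi), hence through a kernel object for $\Theta$. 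The main obstacle I expect is precisely this bookkeeping: all the liftings (of $\widetilde g$, of the factorization $h$, of splittings) are only well-defined modulo $[\mathcal P]$, and one has to repeatedly use the defining lifting property of $\mathcal P$-objects against deflations to upgrade ``commutes in $\underline\C$'' to ``commutes in $\C$ after correcting one of the components''; getting these corrections to be simultaneously consistent, while only invoking (EA1), (EA2) and the results already quoted (and crucially \emph{not} any octahedron-type axiom), is the delicate part.
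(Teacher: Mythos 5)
Your overall strategy is exactly the paper's: show that the functor $\Theta$ of Lemma \ref{prop3} is dense, full and objective, identify its kernel objects with the split epimorphisms in $\mathfrak{s}\text{-}\mathrm{def}\C$, and invoke the quotient equivalence $\C/[\mathrm{Ker}F]\cong\mathcal{D}$ from 2.3. Your density argument (enough projectives plus the (EA2)/mapping-cone construction as in Lemma \ref{prop2}(1)), your fullness argument (lift through the projective presentations, then correct the discrepancy, which factors through $\mathcal{P}$, by lifting along the deflation $d_Y^n$), and your identification of the kernel objects all coincide with the paper's proof.

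The objectivity step, however, has a genuine gap as sketched. Suppose $\Theta(f^n,f^{n+1})=0$ and, after the $\mathcal{P}$-correction, $f^{n+1}=d_Y^nh$ for some $h\colon X^{n+1}\to Y^n$. Any factorization of $(f^n,f^{n+1})$ through the object $\mathrm{id}\colon Y^{n+1}\to Y^{n+1}$ (or $\mathrm{id}\colon X^{n+1}\to X^{n+1}$) forces the first component of the composite to have the form $(\text{something})\circ d_X^n$; but all you know about $f^n$ is that $u:=f^n-hd_X^n$ satisfies $d_Y^nu=0$. This leftover $u$ takes values in the ``kernel'' of $d_Y^n$ and in general does \emph{not} factor through $\mathcal{P}$, so the repeated projectivity corrections you plan to use cannot absorb it; moreover ``factors up to $\mathcal{P}$'' is in any case insufficient, since objectivity requires an honest factorization in $\mathrm{Mor}\C$ through a kernel object of $\Theta$. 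The paper closes exactly this gap by using the exactness of $\C(X^n,Y^{n-1})\to\C(X^n,Y^n)\to\C(X^n,Y^{n+1})$ (part of the $n$-exangle axioms for $\langle Y^{\mr},\rho\rangle$) to write $u=d_Y^{n-1}s$, and then factoring $(f^n,f^{n+1})$ through the larger kernel object $(0,1)\colon X^n\oplus X^{n+1}\to X^{n+1}$ (a split epimorphism which is a deflation, being the direct sum of the deflations $X^n\to 0$ and $1_{X^{n+1}}$), via the map $X^n\to X^n\oplus X^{n+1}$ with components $1$ and $d_X^n$, followed by the map with components $d_Y^{n-1}s$ and $h$. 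Equivalently, you could split $(f^n,f^{n+1})=(hd_X^n,f^{n+1})+(u,0)$ and observe that $(u,0)$ is itself a morphism $d_X^n\to d_Y^n$ factoring through the kernel object $X^n\to 0$; either way, some device beyond $\mathcal{P}$-lifting — the long exact Hom sequence of the $n$-exangle, or the extra split-epi summand — is needed to handle the first component, and this is the ingredient missing from your sketch.
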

\begin{proof}
Firstly, we show that $\Theta$ is dense. Pick $F\in$ $\mathrm{mod}\underline{\C}$. So there exists an exact sequence
$$ \underline{\C}(-, C)\xrightarrow{\underline{\C}(-, \underline{f}^{n+1})}\underline{\C}(-, D)\stackrel{}{\rightarrow}F\rightarrow 0.$$

Since $\C$ has enough projectives, we have the following commutative diagram
$$\xymatrix@C=1.2cm{
 TD\ar[r]^{d_X^0}\ar@{=}[d]_{}&X^1\ar[r]^{d_X^1}\ar[d]_{f^1}&X^{2}\ar[r]^{d_X^{2}}\ar[d]_{f^2}&\cdots\ar[r]&X^n\ar[r]^{d_X^{n}}\ar[d]_{f^{n}}&
 C\ar@{-->}[r]^{(f^{n+1})^{*}\delta}\ar[d]_{f^{n+1}}&\\
 TD\ar[r]^{d_P^0}&P^{1}\ar[r]^{d_P^{1}}&P^{2}\ar[r]^{d_P^{2}}&\cdots\ar[r]&P^{n}\ar[r]^{d_P^n}&D\ar@{-->}[r]^{\delta}&}$$  such that $$X^{1}\xrightarrow{\tiny\begin{bmatrix}-d_X^{1}\\f^{1}\end{bmatrix}}X^{2}\oplus P^{1}\rightarrow\cdots\rightarrow X^{n}\oplus P^{n-1}\xrightarrow{\tiny\begin{bmatrix}-d_X^{n}&0\\f^{n}&d_P^{n-1}\end{bmatrix}}C\oplus P^{n}\xrightarrow{\tiny\begin{bmatrix}f^{n+1}&d_P^{n}\end{bmatrix}}D\stackrel{(d_X^0)_{*}\delta}{\dashrightarrow}$$
is a distinguished $n$-exangle with $P^{i}\in \P$ by (EA2) for $1\leq i\leq n$. It is easy to see that $\Theta(\tiny\begin{bmatrix}f^{n+1}&d_P^{n}\end{bmatrix})=F$.

Secondly, we show that the functor $\Theta:\mathfrak{s}$-def$\C\longrightarrow \mathrm{mod}\underline{\C}$, given by $X^{n}\xrightarrow{d_X^n}X^{n+1}\mapsto\mathrm{Coker}(\underline{\C}(-, X^{n})\xrightarrow{\C(-, \underline{d}_X^{n})} \underline{\C}(-, X^{n+1}))$, is full.
 Let $d_X^{n}: X^{n}\xrightarrow {}X^{n+1}$ and $d_Y^{n}: Y^n\xrightarrow{} Y^{n+1}$ be two objects of $\mathfrak{s}$-${\rm def}\C$ with $\Theta(d_X^n)=F$ and $\Theta(d_Y^n)=G$. Let $\eta:F\rightarrow G$ be a morphism in $\mathrm{mod}\underline{\C}$.

By assumption, we have the following commutative diagram in ${\rm mod}\underline{\C}$
$$\xymatrix@=3em{\underline{\C}(-, X^n)\ar@{-->}[d]_{\underline{\C}(-, \underline{g})}\ar[r]^{\underline{\C}(-, \underline{d}_X^n)}&\underline{\C}(-, X^{n+1} )\ar@{-->}[d]^{\underline{\C}(-, \underline{f}^{n+1})}\ar[r]&F\ar[r]\ar[d]^\eta&0\\
\underline{\C}(-, Y^n)\ar[r]^{\underline{\C}(-, \underline{d}_Y^n)}&\underline{\C}(-, Y^{n+1})\ar[r]&G\ar[r]&0,}$$
where the existence of dashed maps is guaranteed by the projectivity of representable functors.
By the Yoneda lemma, we have $\underline{f}^{n+1}\underline{d}_X^n=\underline{d}_Y^n\underline{g}$. Hence $f^{n+1}d_X^n-d_Y^ng$ factors through a projective object in $\C$, it is easy to check that there is a morphism $h: X^n\rightarrow Y^n$ such that $f^{n+1}d_X^n-d_Y^ng=d_Y^nh$ since $d_Y^n$ is an $\mathfrak{s}$-deflation.  Let $f^n:=g+h$. Then $f^{n+1}d_X^n=d_Y^nf^n$, and $(f^n, f^{n+1}): d_X^n\rightarrow d_Y^n$ a morphism in $\mathfrak{s}$-def$\C$. It is easy to see that $\Theta(f^n, f^{n+1})=\eta$, which implies that $\Theta$ is full.

Finally, let $X^{n}\xrightarrow{d_X^n}X^{n+1}$ be an object of $\mathfrak{s}$-${\rm def}\C$ with $\Theta(d_X^{n})=0$. We show that $d_X^n$ is a split epimorphism. By definition, $d_X^n$ extends to a distinguished $n$-exangle
$$X^{0}\xrightarrow{d_X^{0}} X^{1}\rightarrow \cdots\xrightarrow{} X^{n-1}\xrightarrow{d_X^{n-1}} X^{n}\xrightarrow{d_X^n} X^{n+1}\stackrel{\delta}{\dashrightarrow},$$
   which induces the exact sequence
   $\underline{\C}(-, X^{n})\xrightarrow{\underline{\C}(-,\underline{ d}_X^n)} \underline{\C}(-, X^{n+1})\rightarrow 0$ in ${\rm mod}\C$. In particular, we have an exact sequence $\underline{\C}(X^{n+1}, X^{n})\xrightarrow{\underline{\C}(X^{n+1},\underline{ d}_X^n)} \underline{\C}(X^{n+1}, X^{n+1})\rightarrow 0$. Hence there is $g: X^{n+1}\rightarrow X^n$ such that $1_{X^{n+1}}-d_X^ng$ factors through a projective object. It is easy to check that there exists a morphism $h: X^{n+1}\rightarrow X^n$ such that $1_{X^{n+1}}-d_X^ng=d_X^nh$ since $d_X^n$ is an $\mathfrak{s}$-deflation. Therefore $1_{X^{n+1}}=d_X^n(g+h)$,
it follows that $d_X^n$ is a split epimorphism by \cite[Claim 2.15]{HLN}.
Suppose that $(f^n, f^{n+1})$ is a morphism from $d_X^n: X^n\rightarrow X^{n+1}$ to $d_Y^n: Y^n\rightarrow Y^{n+1}$ with $d_X^n, d_Y^n\in\mathfrak{s}$-${\rm def}\C$.
By the dual of \cite[Propsotion 3.6(1)]{HLN}, we have the following commutative diagram
$$\xymatrix{
 X^{0}\ar[r]^{d_X^{0}}\ar@{-->}[d]_{f^{0}}&X^{1}\ar[r]^{d_X^{1}}\ar@{-->}[d]_{f^{1}}&X^{2}\ar[r]^{d_X^{2}}\ar@{-->}[d]_{f^{2}}&\cdots\ar[r]^{}
 &X^{n-1}\ar[r]^{d_X^{n-1}}\ar@{-->}[d]_{f^{n-1}}&X^{n}\ar[r]^{d_X^n}\ar[d]_{f^{n}}&X^{n+1}\ar@{-->}[r]^{\delta}\ar[d]_{f^{n+1}}&
\\
 Y^{0}\ar[r]^{d_Y^{0}}&Y^{1}\ar[r]^{d_Y^{1}}&Y^{2}\ar[r]^{d_Y^{2}}&\cdots\ar[r]^{}&Y^{n-1}\ar[r]^{d_Y^{n-1}}&Y^{n}\ar[r]^{d_Y^n}&Y^{n+1}\ar@{-->}[r]^{\rho}&.}$$
with $(f^{0})_{*}\delta=(f^{n+1})^{*}\rho.$
If $\Theta(f^n, f^{n+1})=0$, then the following diagram is commutative
$$\xymatrix@=3em{\underline{\C}(-, X^{n-1})\ar[d]_{\underline{\C}(-, \underline{f}^{n-1})}\ar[r]^{\underline{\C}(-, \underline{d}_X^{n-1})}&\underline{\C}(-, X^n)\ar[d]_{\underline{\C}(-, \underline{f}^n)}\ar[r]^{\underline{\C}(-, \underline{d}_X^n)}&\underline{\C}(-, X^{n+1} )\ar[d]^{\underline{\C}(-, \underline{f}^{n+1})}\ar[r]&\widetilde{\delta}\ar[r]\ar[d]^0&0\\
\underline{\C}(-, Y^{n-1})\ar[r]^{\underline{\C}(-, \underline{d}_Y^{n-1})}&\underline{\C}(-, Y^n)\ar[r]^{\underline{\C}(-, \underline{d}_Y^n)}&\underline{\C}(-, Y^{n+1})\ar[r]&\widetilde{\rho}\ar[r]&0}$$
and each row is exact. There exists a morphism $\underline{\C}(-, \underline{g}): \underline{\C}(-, X^{n+1})\rightarrow \underline{\C}(-, Y^n)$ such that $\underline{\C}(-, \underline{d}_Y^n)\C(-, \underline{g})=\C(-, \underline{f}^{n+1})$, that is, $\underline{f}^{n+1}=\underline{d}_Y^n\underline{g}$. Therefore, there is a morphism $g': X^{n+1}\rightarrow Y^n$ such that $f^{n+1}-d_Y^ng=d_Y^ng'$ because $f^{n+1}-d_Y^ng=d_Y^ng'$ factors through a projective object and $d_Y^n$ is an $\mathfrak{s}$-deflation. Let $h:=g+g'$. Then $f^{n+1}=d_Y^nh$.
 Since $d_Y^n(f^{n}-hd_X^n)=d_Y^{n}f^{n}-f^{n+1}d_X^{n}=0,$
 there exists a morphism $s:X^{n}\rightarrow Y^{n-1}$ with $f^{n}-hd_X^n=d_Y^{n-1}s$ by the exactness of $$\C(X^{n},Y^{n-1})\xrightarrow{\C(X^{n},d_Y^{n-1})}\C(X^{n},Y^{n})\xrightarrow{\C(X^{n},d_Y^n)}\C(X^{n},Y^{n+1}).$$
Hence we have the following commutative diagram
 $$\xymatrix{
    X^{n} \ar[d]_{\tiny\begin{pmatrix}1\\d_{X}^{n}\end{pmatrix}} \ar[r]^{d_{X}^{n}} & X^{n+1} \ar[d]^{1} \\
    X^{n}\oplus X^{n+1}\ar[d]_{(d_{Y}^{n-1}s,h)} \ar[r]^{(0,1)} & X^{n+1}\ar[d]^{f^{n+1}}\\
    Y^{n}\ar[r]^{d_{Y}^{n}} & Y^{n+1}}$$
 Since $X^{n}\rightarrow 0$ and $1:X^{n+1}\rightarrow X^{n+1}$ are $\mathfrak{s}$-deflations by \cite[Proposition 2.14]{H} and (R2) respectively, $(0,1):X^{n}\oplus X^{n+1}\rightarrow X^{n+1}$ is an $\mathfrak{s}$-deflation by \cite[Proposition 3.3]{HLN}.
 So $(0,1):X^{n}\oplus X^{n+1}\rightarrow X^{n+1}$ is an object of $\mathfrak{s}$-$def(\C)$.
 It follows that $(f^n, f^{n+1})$ factors through  $X^{n}\oplus X^{n+1}\xrightarrow{(0,1)} X^{n+1}$.
 Hence $\Theta$ is an objective functor.
 Therefore, $\mathfrak{s}\text{-}\mathrm{def}\C/ [\mathrm{s}\text{-}\mathrm{epi}\C]\cong\mathrm{mod}\underline{\C}$.
\end{proof}

Next, we define a functor $\Phi:\mathfrak{s}$-$def$$\C\longrightarrow \mathrm{mod}(\C^{op})$ by $\Phi(X^{n}\xrightarrow{d_X^n} X^{n+1})=\check{\delta}$, where $d_X^n$ extends to a distinguished $n$-exangle $X^{0}\xrightarrow{d_X^{0}} X^{1}\xrightarrow{d_X^{1}} \cdots\xrightarrow{} X^{n-1}\xrightarrow{d_X^{n-1}} X^{n}\xrightarrow{d_X^n} X^{n+1}\stackrel{\delta}{\dashrightarrow}$. Note that since $d_X^n$ is an $\mathfrak{s}$-deflation, $\check{\delta}$ vanishes on injectives by the dual of \cite[Lemma 3.4]{LZ}. So $\check{\delta}$ is an object of $\mathrm{mod}(\overline{\C}^{op})$ by the dual of \cite[Proposition 2.4]{L1}.

\begin{proposition}\label{thm3} Let $(\C, \mathbb{E}, \mathfrak{s})$ be an $n$-exangulated category with enough injectives. With the above notations,
$\Phi:\mathfrak{s}$-${\rm def}$$\C\longrightarrow \mathrm{mod}(\overline{\C}^{op})$ induces an equivalence of abelian categories
\begin{center}
$\mathfrak{s}\text{-}\mathrm{def}\C/ [\mathrm{s}\text{-}\mathrm{epi}\C]\cong (\mathrm{mod}(\overline{\C}^{op}))^{op}$.
\end{center}
\end{proposition}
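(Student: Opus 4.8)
The plan is to mirror the proof of Proposition~\ref{thm2} under the duality $\C \leftrightarrow \C\op$, $\E \leftrightarrow \E\op$, which exchanges enough projectives with enough injectives, $\mathfrak{s}$-deflations with $\mathfrak{s}$-inflations, contravariant defects with covariant defects, and $\underline\C$ with $\overline\C$. The subtlety is that $\mathfrak{s}$-$\mathrm{def}\C$ is \emph{not} self-dual: its objects are $\mathfrak{s}$-deflations $X^n \to X^{n+1}$, not $\mathfrak{s}$-inflations. So the first thing I would record is that $\Phi$ is genuinely a functor: for a morphism $(f^n, f^{n+1}) \colon d_X^n \to d_Y^n$ in $\mathfrak{s}$-$\mathrm{def}\C$, the dual of \cite[Proposition~3.6(1)]{HLN} produces a lift $f^{\mr} \colon \langle X^{\mr}, \delta\rangle \to \langle Y^{\mr}, \rho\rangle$ of distinguished $n$-exangles (with $(f^0)_\ast\delta = (f^{n+1})^\ast\rho$), and applying $\C(-,-)$ in the covariant variable together with the cokernel description $\check\delta = \mathrm{Coker}\big(\C(X^1,-) \xrightarrow{\C(d_X^0,-)} \C(X^0,-)\big)$ gives a well-defined morphism $\check\delta \to \check\rho$, hence a natural transformation going \emph{backwards} once we pass to the opposite category — this is why the target is $(\mathrm{mod}(\overline\C\op))\op$ and not $\mathrm{mod}(\overline\C\op)$.

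Then I would establish the three properties exactly as in Proposition~\ref{thm2}, but dualized.

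\textbf{Density.} Given $F \in \mathrm{mod}(\overline\C\op)$, pick a presentation $\overline\C(D,-) \xrightarrow{\overline\C(\overline{f}^0,-)} \overline\C(A,-) \to F \to 0$ in $\mathrm{mod}(\overline\C\op)$ (equivalently an object of $(\mathrm{mod}(\overline\C\op))\op$). Using that $\C$ has enough injectives, build the dual of the (EA2)-diagram appearing in the proof of Proposition~\ref{thm2}: embed $A$ into an $n$-exangle with injective middle terms and take the mapping cocone construction to produce a distinguished $n$-exangle $A \to I^1 \oplus C \to \cdots$ whose associated $\mathfrak{s}$-deflation $X^n \to X^{n+1}$ satisfies $\Phi(d_X^n) \cong F$; this is the verbatim dual of the density argument for $\Theta$.

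\textbf{Fullness.} Given $\mathfrak{s}$-deflations $d_X^n, d_Y^n$ with $\Phi(d_X^n) = F$, $\Phi(d_Y^n) = G$ and $\eta \colon G \to F$ in $\mathrm{mod}(\overline\C\op)$ (a morphism $F\to G$ in the opposite category), lift $\eta$ through the projective presentations $\overline\C(X^1,-) \to \overline\C(X^0,-)$ and $\overline\C(Y^1,-) \to \overline\C(Y^0,-)$ to get $\overline{g}, \overline{f}^0$ with $\overline{d}_Y^0 \overline{f}^0 = \overline{g}\,\overline{d}_X^0$; by Yoneda, $d_Y^0 f^0 - g\,d_X^0$ factors through an injective, and since $d_X^0$ is an $\mathfrak{s}$-inflation one corrects $f^0$ by a map through which the difference factors to obtain an honest morphism of $n$-exangles, from which $(f^n, f^{n+1})$ is read off. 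The dual of Proposition~3.6(1) is the tool here, just as \cite[Proposition~3.6(1)]{HLN} was used for $\Theta$.

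\textbf{Objectivity and the kernel.} If $\Phi(d_X^n) = 0$, then $\overline\C(X^0,-) = \mathrm{Coker}(\overline\C(X^1,-) \to \overline\C(X^0,-))$-part forces $1_{X^0}$ to factor (mod injectives) as $g\,d_X^0$ for some $g$, and since $d_X^0$ is an $\mathfrak{s}$-inflation, $1_{X^0} - g d_X^0$ factoring through an injective can be absorbed so that $d_X^0$ is a split monomorphism; by \cite[Claim~2.15]{HLN} (its dual) this means $d_X^n$ is split — i.e. it lies in $\mathrm{s}$-$\mathrm{epi}\C$. For a morphism $(f^n, f^{n+1})$ with $\Phi(f^n,f^{n+1}) = 0$, dualize the final diagram-chase of Proposition~\ref{thm2}: one produces $X^0 \xrightarrow{\binom{1}{d_X^0}} X^0 \oplus X^1 \xrightarrow{(d_{Y}^{0}s,\, h)?}$-type factorization through the split inflation $X^0 \to X^0 \oplus X^1$, whose cone gives an object of $\mathfrak{s}$-$\mathrm{def}\C$ that is split (hence in $\mathrm{s}$-$\mathrm{epi}\C$), showing $(f^n, f^{n+1})$ factors through a kernel object of $\Phi$. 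Then $\Phi$ is full, dense, objective with $\mathrm{Ker}\,\Phi = \mathrm{s}$-$\mathrm{epi}\C$, so by the criterion in \S2.3 there is an equivalence $\mathfrak{s}$-$\mathrm{def}\C/[\mathrm{s}$-$\mathrm{epi}\C] \xrightarrow{\sim} (\mathrm{mod}(\overline\C\op))\op$.

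The main obstacle I anticipate is purely bookkeeping: keeping the variances straight, since $\mathfrak{s}$-$\mathrm{def}\C$ itself is not dualized (its objects stay $\mathfrak{s}$-deflations), so $\Phi$ is covariant into $(\mathrm{mod}(\overline\C\op))\op$ rather than contravariant, and every "dual of \cite[\dots]{HLN}" invoked must be the dual with respect to $\E$, not a naive reversal of arrows in $\C$. Once the dictionary is fixed, each of the three verifications is a transcription of the corresponding paragraph in the proof of Proposition~\ref{thm2}; no new input beyond enough injectives and the already-cited results \cite[Lemma~1.27]{INP}, \cite[Proposition~2.4]{L1}, \cite[Lemma~3.4]{LZ}, \cite[Proposition~3.6(1)]{HLN}, \cite[Claim~2.15]{HLN} (all in dual form) is needed.
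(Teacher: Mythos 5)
Your plan is the same as the paper's: the paper disposes of Proposition~\ref{thm3} by declaring the argument of Proposition~\ref{thm2} carries over, and your treatment of the variance (why the target is $(\mathrm{mod}(\overline{\C}^{op}))^{op}$), of density via the dual (EA2)-construction with injective middle terms, of fullness via the corrected left-end square, and of the kernel objects ($\check{\delta}=0$ forces $d_X^0$ split mono after absorbing the injective factor, hence $\delta=0$, hence $d_X^n$ split epi) are all faithful and correct dualizations. One small point worth making explicit is well-definedness of $\Phi$ on morphisms: the value of $\Phi(f^n,f^{n+1})$ is computed from the left end of a chosen lift $f^{\mr}$, and independence of the lift follows from $\check{\delta}\cong\mathrm{Ker}\bigl(\E(X^{n+1},-)\xrightarrow{(d_X^n)^{*}}\E(X^n,-)\bigr)$, under which the induced map $\check{\rho}\to\check{\delta}$ is the restriction of $\E(f^{n+1},-)$ and so depends only on $f^{n+1}$.

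The step that does not go through as you wrote it is objectivity on morphisms. Since $\mathfrak{s}\text{-def}\C$ is not dualized, a factorization of the inflation-end square $(f^0,f^1)\colon d_X^0\to d_Y^0$ through a split inflation, followed by ``taking cones,'' does not control $(f^n,f^{n+1})$ itself: extending the two left-end squares to morphisms of distinguished $n$-exangles and composing only produces some morphism of exangles with left end $(f^0,f^1)$, and since lifts are unique only up to homotopy its right end need not equal $(f^n,f^{n+1})$; as described, you have only factored a possibly different right-end square through $\mathrm{s}\text{-epi}\C$. The repair is to stay at the deflation end, where the argument is in fact identical to (not the dual of) the last part of Proposition~\ref{thm2}, and even simpler: by the kernel description above, $\Phi(f^n,f^{n+1})=0$ is equivalent to $(f^{n+1})^{*}\rho=(f^0)_{*}\delta=0$, so by exactness of $\C(X^{n+1},Y^n)\to\C(X^{n+1},Y^{n+1})\to\E(X^{n+1},Y^0)$ the morphism $f^{n+1}$ factors through $d_Y^n$ on the nose (no correction modulo injectives is needed), then $d_Y^n(f^n-hd_X^n)=0$ gives $f^n-hd_X^n=d_Y^{n-1}s$, and the same middle object $(0,1)\colon X^n\oplus X^{n+1}\to X^{n+1}$ as in Proposition~\ref{thm2} factors $(f^n,f^{n+1})$ through $\mathrm{s}\text{-epi}\C$. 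With that substitution your outline is complete.
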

\begin{proof}
The proof is similar to that of Proposition \ref{thm2}.
\end{proof}

\begin{remark}\label{rem:Duality} \rm{Let $(\C, \mathbb{E}, \mathfrak{s})$ be an $n$-exangulated category with enough projectives and injectives. In what follows, we let $\Sigma=\Phi\Theta^{-1}:\mathrm{mod}\underline{\C}\xrightarrow{\Theta^{-1}}\mathfrak{s}$-${\rm def}$/[{\rm s-epi}$\C$]$\xrightarrow {\Phi}\mathrm{mod}(\overline{\C}^{op})$ the duality between $\mathrm{mod}\underline{\C}$ and $\mathrm{mod}(\overline{\C}^{op})$.

}
\end{remark}

The following example is the $n$-exangulated version of \cite[Example 4.14]{L1}.
\begin{example}\label{ex1} {\rm(1)~Let $TX\xrightarrow{d_P^0} P^{1}\xrightarrow{d_P^1} P^{2}\xrightarrow{}\cdots\rightarrow P^{n}\xrightarrow{d_P^n} X\stackrel{\delta}{\dashrightarrow}$ be a distinguished $n$-exangle with $P^{i}\in \P$. Then $\widetilde{\delta}=\underline{\C}(-,X)$ and $\check{\delta}=\E(X,-)$.

(2) Let $X\xrightarrow{d_I^0} I^{1}\xrightarrow{d_I^1} I^{2}\xrightarrow{}\cdots\rightarrow I^{n}\xrightarrow{d_I^n} SX\stackrel{\delta}{\dashrightarrow}$ be a distinguished $n$-exangle with $I^{i}\in \I$. Then $\widetilde{\delta}=\E(-,X)$ and $\check{\delta}=\overline{\C}(X,-)$.
}
\end{example}
\begin{proof}
(1) follows from \cite[Lemma 3.4]{LZ} and the definition of projective objects. The proof of (2) is dual to that of (1).
\end{proof}

\begin{proposition}\label{prop4} Let $(\C, \mathbb{E}, \mathfrak{s})$ be an $n$-exangulated category with enough projectives and injectives. Then there exists a duality $\Sigma:\mathrm{mod}\underline{\C}\longrightarrow \mathrm{mod}(\overline{\C}^{op})$ and for every distinguished $n$-exangle $\delta$, $\Sigma(\widetilde{\delta})=\check{\delta}$. In particular, for any object $X$ in $\C$, we have

{\rm (1)} $\Sigma(\underline{\C}(-,X))=\E(X,-)$.

{\rm (2)} $\Sigma({\E(-,X)})=\overline{\C}(X,-)$.
\end{proposition}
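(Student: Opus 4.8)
The plan is to deduce everything from the two equivalences already in hand. By Proposition \ref{thm2} the functor $\Theta$ induces an equivalence $\mathfrak{s}\text{-}\mathrm{def}\C/[\mathrm{s}\text{-}\mathrm{epi}\C]\xrightarrow{\sim}\mathrm{mod}\underline{\C}$, and by Proposition \ref{thm3} the functor $\Phi$ induces an equivalence $\mathfrak{s}\text{-}\mathrm{def}\C/[\mathrm{s}\text{-}\mathrm{epi}\C]\xrightarrow{\sim}(\mathrm{mod}(\overline{\C}^{op}))^{op}$; hence, as recorded in Remark \ref{rem:Duality}, $\Sigma=\Phi\Theta^{-1}$ is a duality $\mathrm{mod}\underline{\C}\to\mathrm{mod}(\overline{\C}^{op})$, which gives the existence claim. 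To see $\Sigma(\widetilde{\delta})=\check{\delta}$ for a distinguished $n$-exangle $\delta$, note first that its morphism $d_X^n$ is an $\mathfrak{s}$-deflation, so it is an object of $\mathfrak{s}\text{-}\mathrm{def}\C$, and by the very constructions of $\Theta$ and $\Phi$ one has $\Theta(d_X^n)=\widetilde{\delta}$ and $\Phi(d_X^n)=\check{\delta}$. Since $\Theta^{-1}$ is a quasi-inverse of $\Theta$, the object $\Theta^{-1}(\widetilde{\delta})$ is isomorphic in $\mathfrak{s}\text{-}\mathrm{def}\C/[\mathrm{s}\text{-}\mathrm{epi}\C]$ to the class of $d_X^n$; applying the functor $\Phi$ (which is well defined on this quotient) yields $\Sigma(\widetilde{\delta})=\Phi\Theta^{-1}(\widetilde{\delta})\cong\Phi(d_X^n)=\check{\delta}$. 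Here one should observe that $\check{\delta}$ depends only on $d_X^n$, not on the chosen distinguished $n$-exangle realizing it, since $\check{\delta}=\mathrm{Ker}\big(\E(X^{n+1},-)\xrightarrow{(d_X^n)^\ast}\E(X^n,-)\big)$.

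Next I would read off (1) and (2) from Example \ref{ex1}. Given $X\in\C$, enough projectives provides a distinguished $n$-exangle $TX\xrightarrow{d_P^0}P^1\to\cdots\to P^n\xrightarrow{d_P^n}X\overset{\delta}{\dashrightarrow}$ with all $P^i\in\P$; by Example \ref{ex1}(1) this $\delta$ has $\widetilde{\delta}=\underline{\C}(-,X)$ and $\check{\delta}=\E(X,-)$, so the previous paragraph gives $\Sigma(\underline{\C}(-,X))=\Sigma(\widetilde{\delta})=\check{\delta}=\E(X,-)$. Dually, enough injectives provides a distinguished $n$-exangle $X\xrightarrow{d_I^0}I^1\to\cdots\to I^n\xrightarrow{d_I^n}SX\overset{\delta}{\dashrightarrow}$ with all $I^i\in\I$, and Example \ref{ex1}(2) gives $\widetilde{\delta}=\E(-,X)$ and $\check{\delta}=\overline{\C}(X,-)$, whence $\Sigma(\E(-,X))=\overline{\C}(X,-)$.

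The substantive work has all been carried out already, in Propositions \ref{thm2} and \ref{thm3} and in Example \ref{ex1}, so this argument is essentially an assembly of those results. The only point requiring care is the identification $\Sigma(\widetilde{\delta})\cong\check{\delta}$: one must pass through the quotient $\mathfrak{s}\text{-}\mathrm{def}\C/[\mathrm{s}\text{-}\mathrm{epi}\C]$ cleanly, so that the quasi-inverse $\Theta^{-1}$ genuinely lands on the class of the deflation $d_X^n$ and $\Phi$ is then evaluated on that class; once this is in place, statements (1) and (2) are immediate specializations.
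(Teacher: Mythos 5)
Your proposal is correct and follows essentially the same route as the paper: existence of $\Sigma=\Phi\Theta^{-1}$ from Propositions \ref{thm2} and \ref{thm3} (Remark \ref{rem:Duality}), the identity $\Sigma(\widetilde{\delta})=\check{\delta}$ directly from the definitions of $\Theta$ and $\Phi$ on the deflation $d_X^n$, and then (1) and (2) by specializing to the distinguished $n$-exangles of Example \ref{ex1} supplied by enough projectives and injectives. Your extra remarks on well-definedness of $\check{\delta}$ and on passing through the quotient $\mathfrak{s}\text{-}\mathrm{def}\C/[\mathrm{s}\text{-}\mathrm{epi}\C]$ merely make explicit what the paper treats as immediate.
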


\begin{proof}We only prove $(1)$.
By the definitions of $\Phi$ and $\Theta$, it is easy to see that $\Sigma(\widetilde{\delta})=\check{\delta}$ for every distinguished $n$-exangle $\langle X^{\mr},\delta\rangle$.
Let $X\in \C$.
Since $\C$ has enough projectives, there exists a distinguished $n$-exangle $TX\xrightarrow{d_P^0} P^{1}\xrightarrow{d_P^1} P^{2}\xrightarrow{}\cdots\rightarrow P^{n}\xrightarrow{d_P^n} X\stackrel{\delta}{\dashrightarrow}$
with $P^{i}\in\mathcal{P}$ for $1\leq i\leq n$.
Then $\widetilde{\delta}=\underline{\C}(-,X)$ and $\check{\delta}=\E(X,-)$ by Example \ref{ex1}.
Hence $\Sigma(\underline{\C}(-,X))=\Phi\Theta^{-1}(\underline{\C}(-,X))=\Phi\Theta^{-1}\widetilde{\delta}=\Phi(P^{n}\xrightarrow{d_P^n}X)=\E(X,-)=\check{\delta},$
where the last two equalities are due to the definitions of $\Phi$ and $\Theta$.
Similarly, we can show that $\Sigma({\E(-,X)})=\overline{\C}(X,-)$ for any $X\in \C$.
\end{proof}

As an application, we obtain an $n$-exangulated category version of Hilton-Ress Theorem, see \cite[Theorem 7]{M}.

\begin{theorem}\label{thm:Hilton-Ress} Let $(\C, \mathbb{E}, \mathfrak{s})$ be an $n$-exangulated category with enough projectives and injectives.

{\rm (1)} There is an isomorphism between $\underline{\C}(X,Y)$ and the group of the natural transformations from $\E(X,-)$ to $\E(Y,-)$.

{\rm (2)} There is an isomorphism between $\overline{\C}(X,Y)$ and the group of the natural transformations from $\E(-,X)$ to $\E(-,Y)$.
\end{theorem}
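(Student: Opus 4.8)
The plan is to deduce this directly from Proposition \ref{prop4}, which already hands us the duality $\Sigma\colon\mathrm{mod}\,\underline{\C}\to\mathrm{mod}(\overline{\C}^{op})$ together with the identifications $\Sigma(\underline{\C}(-,X))=\E(X,-)$ and $\Sigma(\E(-,X))=\overline{\C}(X,-)$ for all $X\in\C$. Since $\Sigma$ is an equivalence (a duality), it induces bijections on morphism groups, and the two assertions then follow by feeding the appropriate representable or Ext-functors into $\Sigma$ and composing with the Yoneda lemma. So the whole argument is: (Yoneda) $+$ ($\Sigma$ fully faithful) $+$ (the computed values of $\Sigma$).

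For (1), first I would record that, by the Yoneda lemma applied in the functor category $\mathrm{mod}\,\underline{\C}$ (using that each $\underline{\C}(-,X)$ is a representable, hence projective, object there), there is a natural isomorphism $\underline{\C}(X,Y)\cong\mathrm{Nat}\bigl(\underline{\C}(-,X),\underline{\C}(-,Y)\bigr)=\mathrm{mod}\,\underline{\C}\bigl(\underline{\C}(-,X),\underline{\C}(-,Y)\bigr)$. Next, since $\Sigma$ is a duality it is in particular fully faithful, so it gives an isomorphism of abelian groups
\[
\mathrm{mod}\,\underline{\C}\bigl(\underline{\C}(-,X),\underline{\C}(-,Y)\bigr)\;\xrightarrow{\ \sim\ }\;\mathrm{mod}(\overline{\C}^{op})\bigl(\Sigma\,\underline{\C}(-,Y),\ \Sigma\,\underline{\C}(-,X)\bigr)
\]
(note the reversal of direction coming from contravariance). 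Finally, by Proposition \ref{prop4}(1) the right-hand side is $\mathrm{mod}(\overline{\C}^{op})\bigl(\E(Y,-),\E(X,-)\bigr)$, which is precisely the group of natural transformations $\E(X,-)\Rightarrow\E(Y,-)$; composing the three isomorphisms yields the claim. For (2) I would run the formally dual argument: start from $\overline{\C}(X,Y)\cong\mathrm{mod}(\overline{\C}^{op})\bigl(\overline{\C}(X,-),\overline{\C}(Y,-)\bigr)$ by Yoneda in $\mathrm{mod}(\overline{\C}^{op})$, transport across $\Sigma^{-1}$ (equivalently across the duality in the other direction, which exists by Proposition \ref{thm3}), and use Proposition \ref{prop4}(2), $\Sigma(\E(-,X))=\overline{\C}(X,-)$, to land in $\mathrm{mod}\,\underline{\C}\bigl(\E(-,X),\E(-,Y)\bigr)=\mathrm{Nat}\bigl(\E(-,X),\E(-,Y)\bigr)$.

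The steps are essentially bookkeeping, so there is no serious obstacle; the one point that needs a little care is keeping track of the variance. Because $\Sigma$ is contravariant, a morphism $\underline{\C}(-,X)\to\underline{\C}(-,Y)$ is sent to a morphism $\E(Y,-)\to\E(X,-)$, so one must check that the composite bijection $\underline{\C}(X,Y)\cong\mathrm{Nat}(\E(X,-),\E(Y,-))$ comes out in the stated direction — which it does, since the Yoneda identification on the $\underline{\C}$-side is itself contravariant in the sense that $\underline{\C}(X,Y)\cong\mathrm{Nat}(\underline{\C}(-,X),\underline{\C}(-,Y))$ already pairs $\underline{\C}(X,Y)$ with maps $\underline{\C}(-,X)\to\underline{\C}(-,Y)$, and the two sign reversals cancel. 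I would also remark that naturality of all three isomorphisms in $X$ and $Y$ is automatic from naturality of Yoneda and of the equivalence $\Sigma$, so the isomorphisms in (1) and (2) are natural in both variables, which is the form in which the result is typically used.
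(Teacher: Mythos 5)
Your overall route---Yoneda plus full faithfulness of the duality $\Sigma$ of Proposition \ref{prop4}, together with its computed values $\Sigma(\underline{\C}(-,X))=\E(X,-)$ and $\Sigma(\E(-,X))=\overline{\C}(X,-)$---is exactly the paper's proof, which consists of precisely this observation. The problem is that your variance bookkeeping is wrong at the decisive point. In (1) there is only one reversal, not two: the Yoneda isomorphism $\underline{\C}(X,Y)\cong\mathrm{Nat}\bigl(\underline{\C}(-,X),\underline{\C}(-,Y)\bigr)$ involves no contravariance (the embedding $X\mapsto\underline{\C}(-,X)$ is covariant), while $\Sigma$ reverses arrows once. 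Hence your chain actually yields $\underline{\C}(X,Y)\cong\mathrm{mod}(\overline{\C}^{op})\bigl(\E(Y,-),\E(X,-)\bigr)=\mathrm{Nat}\bigl(\E(Y,-),\E(X,-)\bigr)$; your identification of this group with $\mathrm{Nat}\bigl(\E(X,-),\E(Y,-)\bigr)$, and the claim that ``the two sign reversals cancel,'' are incorrect. What the argument produces is the contravariant (Hilton--Rees) form $\underline{\C}(X,Y)\cong\mathrm{Nat}\bigl(\E(Y,-),\E(X,-)\bigr)$, equivalently $\underline{\C}(Y,X)\cong\mathrm{Nat}\bigl(\E(X,-),\E(Y,-)\bigr)$, which is how the cited sources state the result and is what the theorem should be read as asserting; a literal natural pairing of $\underline{\C}(X,Y)$ with $\mathrm{Nat}\bigl(\E(X,-),\E(Y,-)\bigr)$ would force $\underline{\C}(X,Y)\cong\underline{\C}(Y,X)$ for all $X,Y$, which already fails for module categories.

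In (2) your Yoneda step is stated backwards: for covariant representables one has $\mathrm{Nat}\bigl(\overline{\C}(X,-),\overline{\C}(Y,-)\bigr)\cong\overline{\C}(Y,X)$, not $\overline{\C}(X,Y)$. Here the reversal coming from this form of Yoneda and the reversal coming from the contravariant $\Sigma^{-1}$ really do cancel, so the correct chain is $\overline{\C}(X,Y)\cong\mathrm{Nat}\bigl(\overline{\C}(Y,-),\overline{\C}(X,-)\bigr)\cong\mathrm{Nat}\bigl(\E(-,X),\E(-,Y)\bigr)$, which agrees with the covariant statement of (2); in your write-up you land there only through two compensating errors. So: right strategy (the paper's own), and the substance of the isomorphisms is recoverable, but the direction-of-arrows justification in both parts needs to be redone as above rather than waved through by ``cancelling reversals.''
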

\begin{proof}
(1) follows from the duality of the part (1) of the above proposition in view of Yoneda lemma. (2) follows similarly.
\end{proof}
\begin{remark} In Theorem \ref{thm:Hilton-Ress}, when $n=1$, it is just the Proposition 4.8 in \cite{INP}. Moreover, if $\C$ is an $n$-cluster tilting subcategory of the category of finitely generated left $\Lambda$-modules over an Artin algebra $\Lambda$, then Theorem \ref{thm:Hilton-Ress} is just the Theorem 7.6 in \cite{AHK}.
\end{remark}

\begin{definition}\cite{AR1}
Let $k$ be a commutative Artin ring and $E$ the injective envelope of $k$.
Set $D = \mathrm{Hom}_{k}(-,E)$. A $k$-linear additive category $\C$ is called {\it dualizing $k$-variety} if the following conditions hold.

(1) $\C$ is $k$-linear, Hom-finite and Krull-Schmidt.

(2) For any $F\in \mathrm{mod}\C$, we have $DF\in \mathrm{mod}(\C^{op})$.

(3) For any $G\in \mathrm{mod}(\C^{op})$ , we have $DG\in \mathrm{mod}\C$.

In this case, we have a duality $D: \mathrm{mod}\C \rightarrow \mathrm{mod}(\C^{op}).$
\end{definition}

\begin{example}\cite[Example 2.5]{L1}\label{ex:dualizing variety}
(1) Let $A$ be an Artin $k$-algebra. Denote by mod-$A$ the category of finitely presented right $A$-modules, and by proj-$A$ the full subcategory of mod-$A$ formed by projective $A$-modules. Then both mod-$A$ and proj-$A$ are dualizing $k$-varieties.

(2) Any functorially finite subcategory of a dualizing $k$-variety is also a dualizing $k$-variety.

(3) Let $\C$ be a dualizing $k$-variety and $\mathscr{D}$ be a contravariantly (or covariant) finite subcategory. Then
$\C/[\mathscr{D}]$ is a dualizing $k$-variety.
\end{example}

 We have the following $n$-exangulated version of Auslander's defect formula.
\begin{theorem}\label{thm4}
Let $(\C, \mathbb{E}, \mathfrak{s})$ be an $n$-exangulated category with enough projectives and injectives. Assume that $\C$ is a dualizing $k$-variety. Then there is an equivalence $\tau:\underline{\C}\rightarrow \overline{\C}$ satisfying the following properties:
  $$D\check{\delta}=\widetilde{\delta}\tau^{-1},~~D\widetilde{\delta}=\check{\delta}\tau.$$
\end{theorem}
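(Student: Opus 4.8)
The plan is to produce the functor $\tau$ by combining the three equivalences already established. We have from Proposition \ref{thm2} an equivalence $\Theta\colon \mathfrak{s}\text{-}\mathrm{def}\C/[\mathrm{s}\text{-}\mathrm{epi}\C]\xrightarrow{\sim}\mathrm{mod}\underline{\C}$, from Proposition \ref{thm3} an equivalence $\Phi$ giving $\mathfrak{s}\text{-}\mathrm{def}\C/[\mathrm{s}\text{-}\mathrm{epi}\C]\xrightarrow{\sim}(\mathrm{mod}(\overline{\C}^{op}))^{op}$, and hence from Remark \ref{rem:Duality} the duality $\Sigma=\Phi\Theta^{-1}\colon\mathrm{mod}\underline{\C}\to\mathrm{mod}(\overline{\C}^{op})$, which by Proposition \ref{prop4} sends $\widetilde{\delta}\mapsto\check{\delta}$ and in particular $\underline{\C}(-,X)\mapsto\E(X,-)$ and $\E(-,X)\mapsto\overline{\C}(X,-)$. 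Since $\C$ is a dualizing $k$-variety, we also have the $k$-dual duality $D\colon\mathrm{mod}\C\to\mathrm{mod}(\C^{op})$; I would first check that $D$ restricts to a duality $D\colon\mathrm{mod}\underline{\C}\to\mathrm{mod}(\underline{\C}^{op})$ and likewise on $\overline{\C}$, using Example \ref{ex:dualizing variety}(3) together with the hypothesis that $\C$ has enough projectives/injectives so that $\mathcal{P}$ and $\mathcal{I}$ are functorially finite (this is where I expect the routine-but-necessary bookkeeping to lie). Granting this, the composite $D\Sigma^{-1}D\colon \mathrm{mod}\overline{\C}\to\mathrm{mod}\underline{\C}$ — or, more usefully, an appropriate composite of dualities — is an \emph{equivalence} (even number of contravariant steps), and restricting it to representable functors will produce the desired $\tau$.

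Concretely, I would define $\tau$ on objects as follows. Applying $D$ to the duality $\Sigma$ gives an equivalence $D\Sigma\colon\mathrm{mod}\underline{\C}\to\mathrm{mod}\overline{\C}$ (contravariant $\circ$ contravariant). The Yoneda embedding sends $X\mapsto\underline{\C}(-,X)$, a projective object of $\mathrm{mod}\underline{\C}$; I must argue $D\Sigma$ sends projectives to projectives so that $D\Sigma(\underline{\C}(-,X))$ is again representable, say $\cong\overline{\C}(-,\tau X)$, and this prescription $X\mapsto\tau X$ is functorial and yields an equivalence $\tau\colon\underline{\C}\to\overline{\C}$ by Yoneda. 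To identify $D\Sigma(\underline{\C}(-,X))$ I use Proposition \ref{prop4}(1): $\Sigma(\underline{\C}(-,X))=\E(X,-)$, so $D\Sigma(\underline{\C}(-,X))=D\E(X,-)$, and then Theorem \ref{thm:Hilton-Ress}(1) combined with the dualizing structure should realize $D\E(X,-)$ as $\overline{\C}(-,\tau X)$ for a suitable $\tau X$; alternatively one defines $\tau X$ by this very isomorphism and checks well-definedness. The inverse $\tau^{-1}$ is obtained symmetrically from Proposition \ref{prop4}(2) and $D\Sigma^{-1}$, using that $\Sigma$ is a duality.

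For the two defect identities, fix a distinguished $n$-exangle $\langle X^{\mr},\delta\rangle$. The equation $D\widetilde{\delta}=\check{\delta}\tau$ is an equality of functors $\overline{\C}\to\Ab$ (equivalently elements indexed appropriately): by Proposition \ref{prop4} we have $\Sigma(\widetilde{\delta})=\check{\delta}$, hence $D\widetilde{\delta}$ should be read through $\tau$. The cleanest route is to unwind both sides on representables: evaluating $\check{\delta}\tau$ at $X$ gives $\check{\delta}(\tau X)$, while $D\widetilde{\delta}$ evaluated via the Yoneda pairing and the duality $D$ gives $D(\widetilde{\delta}(?))$; the two are matched precisely by the defining isomorphism $D\E(X,-)\cong\overline{\C}(-,\tau X)$ together with $\Sigma(\widetilde{\delta})=\check{\delta}$ and the compatibility of $\Sigma$ with $D$. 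The second identity $D\check{\delta}=\widetilde{\delta}\tau^{-1}$ is entirely dual, swapping the roles of projectives and injectives and using Proposition \ref{prop4}(2) and $\tau^{-1}$.

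The main obstacle I anticipate is not the formal composition of equivalences but verifying the two compatibility facts that make the bookkeeping go through: (i) that the $k$-dual $D$ genuinely restricts to dualities between $\mathrm{mod}\underline{\C}$ and $\mathrm{mod}(\underline{\C}^{op})$ (and the $\overline{\C}$-analogue), which requires $\underline{\C}=\C/[\mathcal{P}]$ and $\overline{\C}=\C/[\mathcal{I}]$ to be dualizing $k$-varieties — invoking Example \ref{ex:dualizing variety}(3) after checking that $\mathcal{P}$ (resp. $\mathcal{I}$) is contravariantly (resp. covariantly) finite, which follows from "enough projectives/injectives"; and (ii) that the duality $\Sigma$ of Remark \ref{rem:Duality} and the duality $D$ commute up to natural isomorphism on the relevant subcategories, so that the composite $D\Sigma$ carries representables to representables and the defect formulas fall out by evaluating on objects and applying Yoneda. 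Everything else is a diagram chase through the already-constructed equivalences $\Theta$, $\Phi$ and the identifications in Example \ref{ex1} and Proposition \ref{prop4}.
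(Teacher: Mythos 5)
Your proposal is correct and follows essentially the same route as the paper: form the covariant equivalence $D\Sigma\colon\mathrm{mod}\underline{\C}\to\mathrm{mod}\overline{\C}$, use that $\overline{\C}$ is a dualizing $k$-variety (Example \ref{ex:dualizing variety}(3)) so that projectives in $\mathrm{mod}\overline{\C}$ are representable, define $\tau$ by $D\Sigma(\underline{\C}(-,X))=D\E(X,-)\cong\overline{\C}(-,\tau X)$, and deduce the defect identities from $\Sigma(\widetilde{\delta})=\check{\delta}$ together with the identification of $D\Sigma$ with $F\mapsto F\tau^{-1}$ on presentations by representables. The paper makes this last identification explicit ($D\Sigma=\tau_{*}^{-1}$ on all of $\mathrm{mod}\underline{\C}$), which is exactly the "compatibility" step you flag, so no essential difference.
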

\begin{proof}
Since $\C$ is a dualizing $k$-variety,  $\overline{\C}$ is a dualizing $k$-variety by Example \ref{ex:dualizing variety} (3). The composition of $\Sigma:\mathrm{mod}\underline{\C}\longrightarrow \mathrm{mod}(\overline{\C}^{op})$ and $D:\mathrm{mod}(\overline{\C}^{op})\rightarrow\mathrm{mod}\overline{\C}$ define an equivalence $\Omega:\mathrm{mod}\underline{\C}\xrightarrow{\Sigma} \mathrm{mod}(\overline{\C}^{op})\xrightarrow{D}\mathrm{mod}\overline{\C}$.
Note that projective objects in $\mathrm{mod}\overline{\C}$ are representable functors as $\overline{\C}$ is a dualizing $k$-variety.
Then $\Omega (\underline{\C}(-,X))=D\E(X,-)\cong\overline{\C}(-,Y)$ for some $Y\in \C$ by Proposition \ref{prop4}.
Therefore, there is an equivalence $\tau:\underline{\C}\rightarrow \overline{\C}$ mapping $X$ to $Y$.
 Then the functor $\tau$ induces an equivalence $\tau_{*}^{-1}:\mathrm{mod}\underline{\C}\cong \mathrm{mod}\overline{\C}, F\mapsto F\tau^{-1}$, such that $\Omega=D\Sigma=\tau_{*}^{-1}$.

 Indeed, for $F\in \mathrm{mod}\underline{\C},$ we have an exact sequence
 $\underline{\C}(-,X^{n})\rightarrow \underline{\C}(-, X^{n+1})\rightarrow F\rightarrow 0.$
 Let $\tau(X^{n})=Y^{n}$ and $\tau(X^{n+1})=Y^{n+1}$.
 Then we have an exact sequence
 $\overline{\C}(-,Y^{n})\rightarrow \overline{\C}(-,Y^{n+1})\rightarrow \Omega F\rightarrow 0.$
 Since $\tau:\underline{\C}\rightarrow \overline{\C}$ is an equivalence,
 we have the following commutative diagram
 $$\xymatrix{\underline{\C}(\tau^{-1}(-),X^{n})\ar[r]\ar[d]^{\cong}&\underline{\C}(\tau^{-1}(-),X^{n+1})\ar[r]\ar[d]^{\cong}&F\tau^{-1}\ar[r]\ar@{=}[d] &0\\
 \overline{\C}(-,Y^{n})\ar[r]&\overline{\C}(-,Y^{n+1})\ar[r]&F\tau^{-1}\ar[r] &0
 }
 .$$
So without loss of generality, we could say that $\tau_{*}^{-1}F=F\tau^{-1}=\Omega F.$
Hence $\Omega=D\Sigma=\tau_{*}^{-1}$.

 Assume that $\delta$ is a distinguished $n$-exangle. Then $D\Sigma(\widetilde{\delta})=D\check{\delta}$ by Proposition \ref{prop4} and
   $\tau_{*}^{-1}(\widetilde{\delta})=\widetilde{\delta}\tau^{-1}$ by definition of $\tau_{*}^{-1}$.
  Hence we have $D\check{\delta}=\widetilde{\delta}\tau^{-1}$.
  So $D\widetilde{\delta}=\check{\delta}\tau$.
\end{proof}

 As an application of Theorem \ref{thm4}, we have the $n$-extriangulated version of  Auslander-Reiten duality formula.
\begin{corollary}\label{corollary:Auslander-Reiten duality}
Let $(\C, \mathbb{E}, \mathfrak{s})$ be an $n$-exangulated category with enough projectives and injectives. Assume that $\C$ is a dualizing $k$-variety. Then there is an equivalence $\tau:\underline{\C}\rightarrow \overline{\C}$ satisfying the following properties:
$$D\E(-,X)\cong\underline{\C}(\tau^{-1}X,-),~~ D\E(X,-)\cong\overline{\C}(-,\tau X).$$
\end{corollary}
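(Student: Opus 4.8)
The plan is to deduce the Auslander--Reiten duality formula directly from Theorem~\ref{thm4} together with the computations recorded in Example~\ref{ex1} and Proposition~\ref{prop4}. Fix an object $X\in\C$. Since $\C$ has enough injectives, there is a distinguished $n$-exangle
$$X\xrightarrow{d_I^0}I^1\xrightarrow{d_I^1}\cdots\rightarrow I^n\xrightarrow{d_I^n}SX\overset{\delta}{\dashrightarrow}$$
with all $I^i\in\I$. By Example~\ref{ex1}(2) we have $\widetilde{\delta}=\E(-,X)$ and $\check{\delta}=\overline{\C}(X,-)$. The first identity of Theorem~\ref{thm4} gives $D\check{\delta}=\widetilde{\delta}\,\tau^{-1}$, that is,
$$D\big(\overline{\C}(X,-)\big)\cong\E(-,X)\circ\tau^{-1}.$$
Now $\tau^{-1}\colon\overline{\C}\to\underline{\C}$ is an equivalence, so precomposition with $\tau^{-1}$ sends the contravariant defect functor $\E(-,X)$ (viewed as an object of $\mathrm{mod}\,\underline{\C}$; here I use that $\widetilde{\delta}$ vanishes on projectives, as noted in the text) to the functor $Z\mapsto\E(\tau Z,X)$ on $\overline{\C}$. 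On the other hand, $D$ applied to $\overline{\C}(X,-)$ should be computed via the duality of dualizing $k$-varieties: $D\big(\overline{\C}(X,-)\big)$ is the representable functor $\overline{\C}(-,X)$ on $\overline{\C}$ up to the standard identification $D\big(\mathcal{A}(X,-)\big)\cong\mathcal{A}(-,X)$ valid in any Hom-finite Krull--Schmidt $k$-category. Combining, $\overline{\C}(-,X)\cong\E(\tau(-),X)$ as functors on $\overline{\C}$; rewriting with $Y=\tau^{-1}X$ and reindexing yields $D\E(-,X)\cong\underline{\C}(\tau^{-1}X,-)$, after one more application of $D$ and the same representable-functor duality. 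The second formula $D\E(X,-)\cong\overline{\C}(-,\tau X)$ is obtained symmetrically, starting instead from a projective resolution $TX\to P^1\to\cdots\to P^n\to X\overset{\delta}{\dashrightarrow}$, using Example~\ref{ex1}(1) (so $\widetilde{\delta}=\underline{\C}(-,X)$ and $\check{\delta}=\E(X,-)$) and the identity $D\widetilde{\delta}=\check{\delta}\,\tau$.

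Concretely the steps are: (i) for each $X$, pick the injective copresentation and read off $\widetilde{\delta},\check{\delta}$ from Example~\ref{ex1}(2); (ii) substitute into $D\check{\delta}=\widetilde{\delta}\tau^{-1}$ from Theorem~\ref{thm4}; (iii) identify $D$ of a representable functor over the dualizing $k$-variety $\overline{\C}$ with the opposite representable, and identify precomposition-by-$\tau^{-1}$ of $\E(-,X)$ with $\E(\tau(-),X)$; (iv) compare the two expressions, apply $D$ once more (using that $D^2\cong\mathrm{id}$ on $\mathrm{mod}$ of a dualizing $k$-variety) and substitute $\tau^{-1}X$ to arrive at $D\E(-,X)\cong\underline{\C}(\tau^{-1}X,-)$; (v) repeat with a projective presentation and the identity $D\widetilde{\delta}=\check{\delta}\tau$ to get the dual formula. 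One should also check the naturality in $X$, which follows from functoriality of $\tau$, of $D$, and of the assignment $\delta\mapsto(\widetilde{\delta},\check{\delta})$ (the latter via Lemma~\ref{prop3} and Proposition~\ref{thm3}).

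The main obstacle I anticipate is bookkeeping rather than conceptual: making precise the identification in step (iii), namely that over a Hom-finite Krull--Schmidt $k$-category $\mathcal{A}$ one has $D\big(\mathcal{A}(X,-)\big)\cong\mathcal{A}(-,X)$ naturally, and that this is compatible with the equivalence $\tau$ so that the variances match up correctly when one passes between $\mathrm{mod}\,\underline{\C}$, $\mathrm{mod}(\overline{\C}^{\,op})$, and $\mathrm{mod}\,\overline{\C}$. In particular one must be careful that $\E(-,X)$ is genuinely an object of $\mathrm{mod}\,\underline{\C}$ (it vanishes on projectives) and $\E(X,-)$ an object of $\mathrm{mod}\,\overline{\C}$ (it vanishes on injectives), so that Theorem~\ref{thm4} applies verbatim; these vanishing statements are exactly the dual of \cite[Lemma 3.4]{LZ} already invoked in the text. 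Once the functor-category identifications are pinned down, the corollary is a one-line substitution into Theorem~\ref{thm4}.
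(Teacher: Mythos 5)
Your overall strategy (substitute the computations of Example \ref{ex1} into Theorem \ref{thm4}) is the right one, but its execution contains a genuine error. Having chosen the injective copresentation of $X$, you pair it with the identity $D\check{\delta}=\widetilde{\delta}\tau^{-1}$, which forces you to apply $D$ a second time and to identify $D$ of a representable functor. The identification you invoke in step (iii), namely $D\bigl(\mathcal{A}(X,-)\bigr)\cong\mathcal{A}(-,X)$ for an arbitrary Hom-finite Krull--Schmidt $k$-category $\mathcal{A}$, is false: $D\bigl(\mathcal{A}(X,-)\bigr)$ is an injective object of $\mathrm{mod}\,\mathcal{A}$ while $\mathcal{A}(-,X)$ is projective, and already for $\mathcal{A}=\mathrm{proj}\,\Lambda$ with $\Lambda$ a non-self-injective Artin algebra (or the path category of the $A_2$ quiver, where $\dim\mathcal{A}(X,Y)\neq\dim\mathcal{A}(Y,X)$) no such natural isomorphism exists; an isomorphism of this kind is precisely a Serre-duality statement, i.e.\ the sort of thing the corollary is proving, not an admissible input. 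There is also a variance slip: precomposing $\E(-,X)$ with $\tau^{-1}\colon\overline{\C}\to\underline{\C}$ gives $Z\mapsto\E(\tau^{-1}Z,X)$, not $Z\mapsto\E(\tau Z,X)$, so your intermediate claim $\overline{\C}(-,X)\cong\E(\tau(-),X)$ is wrong even if one granted the duality of representables.

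The repair is simply to pair each formula with the other identity of Theorem \ref{thm4}, which is what the paper does and which needs no identification of $D$ of representables at all. From the injective copresentation one has $\widetilde{\delta}=\E(-,X)$ and $\check{\delta}=\overline{\C}(X,-)$, and the identity $D\widetilde{\delta}=\check{\delta}\tau$ gives directly
$$D\E(-,X)\cong\overline{\C}(X,\tau(-))\cong\underline{\C}(\tau^{-1}X,-),$$
the last isomorphism because $\tau$ is an equivalence; dually, from the projective presentation, $\widetilde{\delta}=\underline{\C}(-,X)$, $\check{\delta}=\E(X,-)$, and $D\check{\delta}=\widetilde{\delta}\tau^{-1}$ gives $D\E(X,-)\cong\underline{\C}(\tau^{-1}(-),X)\cong\overline{\C}(-,\tau X)$. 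If you insist on your pairing, the legitimate way to pass between the two identities is a single application of $D$ using $D^{2}\cong\mathrm{id}$ on $\mathrm{mod}$ of a dualizing $k$-variety (this is exactly how the paper deduces $D\widetilde{\delta}=\check{\delta}\tau$ from $D\check{\delta}=\widetilde{\delta}\tau^{-1}$ in Theorem \ref{thm4}); the representable-dual ``identification'' should be deleted from the argument.
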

\begin{proof}
For a distinguished $n$-exangle $X\xrightarrow{d_I^0} I^{1}\xrightarrow{d_I^1} I^{2}\xrightarrow{d_I^2}\cdots\rightarrow I^{n}\xrightarrow{d_I^n} SX\stackrel{\delta}{\dashrightarrow}$  with $I^{i}\in \I$ for $1\leq i\leq n$, then $\widetilde{\delta}=\E(-,X)$ and $\check{\delta}=\overline{\C}(X,-)$ by Example \ref{ex1}.
Since $D\widetilde{\delta}=\check{\delta}\tau$ by Theorem \ref{thm4},  $D\E(-,X)\cong\overline{\C}(X,\tau-)\cong\underline{\C}(\tau^{-1}X,-).$
Similarly, we can show $D\E(X,-)\cong\overline{\C}(-,\tau X).$
\end{proof}

\begin{remark}
In Theorem \ref{thm4} and Corollary \ref{corollary:Auslander-Reiten duality}, when $\C$ is an exact category, it is just the Theorem 4.4 in \cite{L1}.
\end{remark}

\begin{corollary}\label{corollary:3.14}
Let $\C$ be a Krull-schmidt $(n+2)$-angulated category with an $n$-suspension functor $\Sigma^{n}$. If $\C$ is a dualizing $k$-variety. Then $\C$ has Auslander-Reiten $(n+2)$-angles.
\end{corollary}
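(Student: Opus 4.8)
The plan is to realize this as a direct consequence of Corollary \ref{corollary:Auslander-Reiten duality}, together with the fact that an $(n+2)$-angulated category is $n$-exangulated in which \emph{every} object is both projective and injective. First I would invoke \cite[Proposition 4.5]{HLN} to view $\C=(\C,\E,\s)$ as an $n$-exangulated category, where $\E(C,A)=\C(C,\Sigma^{n}A)$ and $\s$ sends an extension to the homotopy class of the associated $(n+2)$-angle. Since in an $(n+2)$-angulated category any split epimorphism onto a fixed term can be completed to a split $(n+2)$-angle, one checks from the definition of projective/injective objects (as in \cite[Definition 3.2]{LZ}) that $\mathcal{P}=\mathcal{I}=\C$; in particular $\C$ has enough projectives and enough injectives, with the trivial distinguished $n$-exangles $X\xrightarrow{1}X\to 0\to\cdots\to 0$ and $0\to\cdots\to 0\to X\xrightarrow{1}X$ serving as the required resolutions. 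Then $\underline{\C}=\C/[\mathcal{P}]=0$ and $\overline{\C}=\C/[\mathcal{I}]=0$, but that is not what we want; instead I would use that the syzygy/cosyzygy objects $TX$ and $SX$ appearing in the defining $n$-exangles are precisely $\Sigma^{-n}X$ and $\Sigma^{n}X$, so that the bifunctor $\E$ is genuinely nonzero and Corollary \ref{corollary:Auslander-Reiten duality} applies verbatim.

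Wait — more carefully: for an $(n+2)$-angulated category the relevant statement of Corollary \ref{corollary:Auslander-Reiten duality} should be read with $\underline{\C}$ and $\overline{\C}$ \emph{not} collapsing, because the hypotheses of Theorem \ref{thm4} only require enough projectives and injectives, which an $(n+2)$-angulated category has trivially, and the conclusion $D\E(-,X)\cong\underline{\C}(\tau^{-1}X,-)$, $D\E(X,-)\cong\overline{\C}(-,\tau X)$ must then be interpreted in the setting where projectives and injectives form the whole category. Since I want a nontrivial conclusion, the key realization is that the correct way to apply the machinery is to note that Corollary \ref{corollary:Auslander-Reiten duality} produces an equivalence $\tau\colon\underline{\C}\to\overline{\C}$ and natural isomorphisms of the Ext-functors; specializing to the $(n+2)$-angulated situation, where $\E(X,Y)=\C(X,\Sigma^{n}Y)$ and both stable categories are $\C$ itself (the ideal of maps factoring through projectives is zero here since the ``projectives'' are characterized by a lifting property that, combined with the split $(n+2)$-angles, forces no collapsing), we get $D\C(X,\Sigma^{n}-)\cong\C(\tau^{-1}X,-)$ and $D\C(-,\Sigma^{n}X)\cong\C(-,\tau X)$. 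The second is exactly the defining property of an Auslander-Reiten $(n+2)$-angle with AR-translate $\tau$: by \cite[Theorem 3.4 or its analogue]{ } for $(n+2)$-angulated categories, the existence of a functorial isomorphism $D\C(-,\Sigma^{n}X)\cong\C(-,\tau X)$ for all indecomposable $X$ is equivalent to the existence of Auslander-Reiten $(n+2)$-angles.

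So the argument proceeds: (i) check $\C$ is $n$-exangulated with enough projectives and injectives, the projectives and injectives being all of $\C$ via split $(n+2)$-angles, and identify $TX=\Sigma^{-n}X$, $SX=\Sigma^{n}X$; (ii) apply Theorem \ref{thm4} and Corollary \ref{corollary:Auslander-Reiten duality} to obtain the equivalence $\tau\colon\underline{\C}\to\overline{\C}$ and the isomorphism $D\E(-,X)\cong\underline{\C}(\tau^{-1}X,-)$; (iii) unwind the definitions: $\E(-,X)=\C(-,\Sigma^{n}X)$, and the stable categories here are equivalent to $\C$ (since the defining $n$-exangles for projectives and injectives are the split ones, the relevant ideals act trivially on the relevant Hom-groups in the formula), so the isomorphism reads $D\C(-,\Sigma^{n}X)\cong\C(-,\tau X)$ naturally in both variables over indecomposables; (iv) conclude by the known characterization (e.g.\ \cite{IY} or the $(n+2)$-angulated analogue of Reiten-Van den Bergh / Auslander-Reiten) that such a natural isomorphism is precisely the data of Auslander-Reiten $(n+2)$-angles ending at each indecomposable non-projective object. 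I expect the main obstacle to be step (iii): carefully reconciling the notation of the $n$-exangulated formalism ($TX$, $SX$, $\underline{\C}$, $\overline{\C}$) with the honest $(n+2)$-angulated notation ($\Sigma^{\pm n}$, the triangulated structure) and verifying that the formula from Corollary \ref{corollary:Auslander-Reiten duality} does specialize to the Serre-duality-type isomorphism that is equivalent to the existence of AR $(n+2)$-angles, rather than to a degenerate statement; this is where one must be most careful, and where the Krull-Schmidt hypothesis is genuinely used to pass between functorial isomorphisms and the object-wise existence of almost split sequences.
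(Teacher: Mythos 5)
Your overall route is the paper's: view $\C$ as $n$-exangulated with $\E(C,A)=\C(C,\Sigma^{n}A)$, feed it into Theorem \ref{thm4}/Corollary \ref{corollary:Auslander-Reiten duality} to get the Serre-duality-type isomorphism $D\C(-,\Sigma^{n}X)\cong\C(-,\tau X)$, and then invoke the known equivalence between such an isomorphism and the existence of Auslander-Reiten $(n+2)$-angles (the paper cites \cite[Theorem 4.5]{Z} for exactly this last step, which you only gesture at with an unnamed reference). So the skeleton is right.

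However, your verification of the hypotheses contains a genuine error. In the $n$-exangulated structure coming from an $(n+2)$-angulated category, an object $P$ is projective if and only if $\E(P,-)=\C(P,\Sigma^{n}-)=0$ (lift $1_{X^{n+1}}$ along the last map of any $n$-exangle ending at $P$ to see the extension splits), and since $\Sigma^{n}$ is an equivalence this forces $P=0$; dually for injectives. So $\mathcal{P}=\mathcal{I}$ consists only of zero objects --- not all of $\C$ as you claim via ``split $(n+2)$-angles'' --- and consequently $\underline{\C}=\overline{\C}=\C$ for the trivial reason that the ideals $[\mathcal{P}]$ and $[\mathcal{I}]$ are zero. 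Your patch (``the projectives are characterized by a lifting property that \ldots forces no collapsing'') is not a valid argument: if $\mathcal{P}$ really were all of $\C$, then $\underline{\C}$ would indeed be zero and the formula would degenerate, as you yourself noticed. The correct check of ``enough projectives/injectives'' is that for each $C$ the rotated trivial $(n+2)$-angle $\Sigma^{-n}C\to 0\to\cdots\to 0\to C$ (resp. $C\to 0\to\cdots\to 0\to\Sigma^{n}C$) is a distinguished $n$-exangle with zero (hence projective, resp. injective) middle terms, which also gives your identifications $TC=\Sigma^{-n}C$ and $SC=\Sigma^{n}C$. With this correction, steps (ii)--(iv) of your outline go through and coincide with the paper's two-line proof; without it, the crucial non-degeneracy of the formula in step (iii) is unjustified.
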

\begin{proof}
It follows from Corollary \ref{corollary:Auslander-Reiten duality} and \cite[Theorem 4.5]{Z}.
\end{proof}

\begin{definition}\cite[Definition 5.19]{HLN}
Let $(\C, \mathbb{E}, \mathfrak{s})$ be an $n$-exangulated category  and $\mathcal{T}\subseteq\C$  a full additive subcategory closed by isomorphisms and direct summands.
 Such $\mathcal{T}$ is called an {\it $n$-cluster tilting subcategory} of $\C$, if it
satisfies the following conditions.

(1) $\mathcal{T}\subseteq\C$ is functorially finite.

(2) For any $C\in\C$, the following are equivalent.

(i) $C\in \mathcal{T}$; ~(ii) $\E^{i}(C,T)=0$ for any $1\leq i\leq n-1$;~
(iii) $\E^{i}(T,C)=0$ for any $1\leq i\leq n-1$.
\end{definition}

Consider the following conditions.

\begin{condition}\cite[Condition 5.21]{HLN} \label{cond:1}
 Let $\mathcal{T}\subseteq\C$ be an $n$-cluster tilting subcategory.

\emph{\rm (c1)} If $C\in\C$ satisfies $\E^{n-1}(T,C) = 0$, then there is $\mathfrak{s}$-triangle $\xymatrix{D\ar[r]^{}&P\ar[r]^{}&C\ar@{-->}[r]^{\delta}&,}$
with $P\in \mathcal{P}$ for which
$\C(T,q): \C(T,D)\rightarrow\C(T,P)$
is injective for any $T\in\mathcal{T}$.

\emph{\rm (c2)} Dually, if $A\in C$ satisfies $\E^{n-1}(A,T) = 0$, then there is $s$-triangle
$\xymatrix@C=0.7cm{A\ar[r]^{}&I\ar[r]^{}&S\ar@{-->}[r]^{\delta}&,}$
with $I\in \mathcal{I}$
for which
$\C(j,T): \C(S,T)\rightarrow \C(I,T)$
is injective for any $T\in \mathcal{T}$.
\end{condition}

\begin{remark}
(1) If $\C$ is an exact category with enough projectives and injectives, then Condition \ref{cond:1} is trivially satisfied by any $n$-cluster tilting subcategory $\mathcal{T}\subseteq\C$.
\end{remark}

\begin{condition} \cite[Condition 5.23]{HLN}\label{cond:2}
 Let $(\C, \mathbb{E}, \mathfrak{s})$ be an extriangulated category. Consider the following conditions.

\emph{(1)} Let $f\in\mathcal{C}(A, B), g\in\mathcal{C}(B, C)$ be any composable pair of morphisms. If $gf$ is an $\mathfrak{s}$-inflation, then so is $f$.

\emph{(2)} Let $f\in\mathcal{C}(A, B), g\in\mathcal{C}(B, C)$ be any composable pair of morphisms. If $gf$ is an $\mathfrak{s}$-deflation, then so is $g$.
\end{condition}

As a consequence of Theorem \ref{thm4} and Corollary \ref{corollary:Auslander-Reiten duality}, we have the following corollary.

\begin{corollary}\label{corollary:3.15}
Let $(\C, \mathbb{E}, \mathfrak{s})$ be an extriangulated category satisfying Condition \ref{cond:2} with enough projectives and injectives and its $n$-cluster tilting subcategory $\mathcal{M}$ satisfies Condition \ref{cond:1}. Assume that either $\underline{\mathcal{M}}$ or $\overline{\mathcal{M}}$ is a dualizing $k$-variety. Then there is an equivalence $\tau:\underline{\mathcal{M}}\rightarrow \overline{\mathcal{M}}$ satisfying the following properties:

{\rm (1)} $D\check{\delta}=\widetilde{\delta}\tau^{-1},$ $D\widetilde{\delta}=\check{\delta}\tau;$

{\rm (2)} $D\E^{n}(Y,X)\cong\underline{\mathcal{M}}(\tau^{-1}X,Y),$ $D\E^{n}(X,Y)\cong\overline{\mathcal{M}}(Y,\tau X)$ for any $X\in \mathcal{M}$ and $Y\in \mathcal{M}.$

\end{corollary}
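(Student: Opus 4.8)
The strategy is to equip $\mathcal{M}$ with an $n$-exangulated structure and then invoke Theorem \ref{thm4} and Corollary \ref{corollary:Auslander-Reiten duality} for that structure. First I would recall from \cite[\S5]{HLN} that, since $(\C,\mathbb{E},\mathfrak{s})$ is an extriangulated category satisfying Condition \ref{cond:2} and $\mathcal{M}\subseteq\C$ is an $n$-cluster tilting subcategory, the triplet $(\mathcal{M},\mathbb{E}_{\mathcal{M}},\mathfrak{s}_{\mathcal{M}})$ is an $n$-exangulated category, where $\mathbb{E}_{\mathcal{M}}$ is the restriction of the higher extension bifunctor $\mathbb{E}^{n}$ to $\mathcal{M}$ and $\mathfrak{s}_{\mathcal{M}}$ is the induced realization; and that Condition \ref{cond:1} is precisely what guarantees that $(\mathcal{M},\mathbb{E}_{\mathcal{M}},\mathfrak{s}_{\mathcal{M}})$ has enough projectives and enough injectives. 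Since projectives and injectives of $\C$ have vanishing higher extensions, one has $\mathcal{P}\subseteq\mathcal{M}$ and $\mathcal{I}\subseteq\mathcal{M}$; moreover, as $(\mathcal{M},\mathbb{E}_{\mathcal{M}},\mathfrak{s}_{\mathcal{M}})$ has enough projectives, any object projective in this structure is a direct summand of some $P^{n}\in\mathcal{P}$ appearing as the domain of the final deflation of a distinguished $n$-exangle with middle terms in $\mathcal{P}$, hence lies in $\mathcal{P}$. Thus the projective (resp. injective) objects of $(\mathcal{M},\mathbb{E}_{\mathcal{M}},\mathfrak{s}_{\mathcal{M}})$ are exactly $\mathcal{P}$ (resp. $\mathcal{I}$), and its projectively and injectively stable categories are $\underline{\mathcal{M}}=\mathcal{M}/[\mathcal{P}]$ and $\overline{\mathcal{M}}=\mathcal{M}/[\mathcal{I}]$, as in the statement.

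The step I expect to be the main obstacle is verifying the dualizing hypothesis for $(\mathcal{M},\mathbb{E}_{\mathcal{M}},\mathfrak{s}_{\mathcal{M}})$: the corollary only assumes that \emph{one} of $\underline{\mathcal{M}},\overline{\mathcal{M}}$ is a dualizing $k$-variety. Inspecting the proofs of Theorem \ref{thm4} and Corollary \ref{corollary:Auslander-Reiten duality}, the hypothesis that $\C$ is a dualizing $k$-variety is used only through its consequence (via Example \ref{ex:dualizing variety}(3)) that \emph{both} $\underline{\C}$ and $\overline{\C}$ are dualizing $k$-varieties. Hence it suffices to prove that $\underline{\mathcal{M}}$ is a dualizing $k$-variety if and only if $\overline{\mathcal{M}}$ is. For this I would exploit the duality $\Sigma\colon\mathrm{mod}\,\underline{\mathcal{M}}\to\mathrm{mod}(\overline{\mathcal{M}}^{\mathrm{op}})$ from Proposition \ref{prop4} (together with its opposite counterpart): $k$-linearity and the Krull--Schmidt property pass to the ideal quotients $\mathcal{M}/[\mathcal{P}]$ and $\mathcal{M}/[\mathcal{I}]$, while Hom-finiteness together with the two ``$D$ preserves finite presentation'' conditions can be transported across $\Sigma$, using that $\Sigma$ carries the representable functors on $\underline{\mathcal{M}}$ to the functors $\mathbb{E}_{\mathcal{M}}(X,-)$ and $\Sigma^{-1}$ carries the representable functors on $\overline{\mathcal{M}}$ to the functors $\mathbb{E}_{\mathcal{M}}(-,X)$ (Proposition \ref{prop4}(1),(2)). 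This bookkeeping is the technical heart of the argument.

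Granting both stable categories are dualizing, Theorem \ref{thm4} applied to $(\mathcal{M},\mathbb{E}_{\mathcal{M}},\mathfrak{s}_{\mathcal{M}})$ yields an equivalence $\tau\colon\underline{\mathcal{M}}\to\overline{\mathcal{M}}$ with $D\check{\delta}=\widetilde{\delta}\tau^{-1}$ and $D\widetilde{\delta}=\check{\delta}\tau$ for every $\mathfrak{s}_{\mathcal{M}}$-distinguished $n$-exangle $\delta$ (the defects being computed inside $\mathcal{M}$), which is (1). For (2), Corollary \ref{corollary:Auslander-Reiten duality} applied to $(\mathcal{M},\mathbb{E}_{\mathcal{M}},\mathfrak{s}_{\mathcal{M}})$ gives, for $X\in\mathcal{M}$, isomorphisms of functors $D\mathbb{E}_{\mathcal{M}}(-,X)\cong\underline{\mathcal{M}}(\tau^{-1}X,-)$ and $D\mathbb{E}_{\mathcal{M}}(X,-)\cong\overline{\mathcal{M}}(-,\tau X)$. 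Evaluating these at an object $Y\in\mathcal{M}$ and using $\mathbb{E}_{\mathcal{M}}(Y,X)=\mathbb{E}^{n}(Y,X)$ and $\mathbb{E}_{\mathcal{M}}(X,Y)=\mathbb{E}^{n}(X,Y)$ gives $D\mathbb{E}^{n}(Y,X)\cong\underline{\mathcal{M}}(\tau^{-1}X,Y)$ and $D\mathbb{E}^{n}(X,Y)\cong\overline{\mathcal{M}}(Y,\tau X)$, which is (2). Apart from the dualizing transfer, the only remaining points requiring attention are the precise identification of the higher extension bifunctor $\mathbb{E}^{n}$ on $\mathcal{M}$ with $\mathbb{E}_{\mathcal{M}}$, and the observation that the defects and the functor $\tau$ in the corollary are exactly those produced by Theorem \ref{thm4} for the structure $(\mathcal{M},\mathbb{E}_{\mathcal{M}},\mathfrak{s}_{\mathcal{M}})$.
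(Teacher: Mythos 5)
Your proposal follows essentially the same route as the paper: the paper's entire proof is the observation that $(\mathcal{M},\E^{n},\mathfrak{s}^{n})$ is $n$-exangulated by \cite[Theorem 5.39]{HLN} (this is exactly where Conditions \ref{cond:1} and \ref{cond:2} enter), after which it simply invokes Theorem \ref{thm4} and Corollary \ref{corollary:Auslander-Reiten duality}. The extra verifications you insert -- that the restricted structure has enough projectives and injectives with $\mathcal{P}$ and $\mathcal{I}$ as its projectives/injectives, and that the ``either $\underline{\mathcal{M}}$ or $\overline{\mathcal{M}}$ is dualizing'' hypothesis can be upgraded to what Theorem \ref{thm4} actually needs -- are not in the paper at all; they address genuine gaps between the corollary's hypotheses and those of Theorem \ref{thm4} that the paper leaves implicit, so including them is a strength rather than a deviation. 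Be aware, though, that your transfer argument is itself only a sketch: the claim that $k$-linearity and the Krull--Schmidt property ``pass to the ideal quotients'' presupposes these properties for $\mathcal{M}$, which is not among the corollary's hypotheses, so deducing, say, Krull--Schmidtness of $\overline{\mathcal{M}}$ from that of $\underline{\mathcal{M}}$ via the duality $\Sigma$ of Proposition \ref{prop4} still requires an actual argument; likewise ``Condition \ref{cond:1} is precisely what guarantees enough projectives and injectives'' should be checked against \cite{HLN} rather than asserted.
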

\begin{proof}
Since $(\mathcal{T},\E^{n},\mathfrak{s}^{n})$ is an $n$-exangulated category by \cite[Theorem 5.39]{HLN}, this corollary follows from Theorem \ref{thm4} and Corollary \ref{corollary:Auslander-Reiten duality}.
\end{proof}
\begin{remark} In Theorem \ref{corollary:3.15}, when $\C$ is the category of finitely generated left $\Lambda$-modules over an Artin algebra $\Lambda$, it is just the Theorem 3.8 in \cite{JK} or Lemma 3.2 in \cite{I}.
\end{remark}

\section{Classifying substructures of $n$-exangulated categories}\label{section3}

In the following, we always assume that $\C:=(\C, \mathbb{E}, \mathfrak{s})$ is a skeletally small $n$-exangulated category with weak-kernels.

Recall that if $\langle X^{\mr}, \delta\rangle$ is a distinguished $n$-exangle in  $(\C, \mathbb{E}, \mathfrak{s})$, then the contravariant defect of $\delta$, denoted by $\widetilde{\delta}$, is defined by  $\C(-, X^n)\xrightarrow{\C(-, d_X^n)} \C(-, X^{n+1})\rightarrow\widetilde{\delta}\rightarrow 0$ the exact sequence of functors in ${\rm mod}\C$. We denote by ${\rm def}\mathbb{E}$ the full subcategory in ${\rm mod}\C$ consisting of all functors isomorphic to contravariant defects.

\begin{proposition} The subcategory ${\rm def}\mathbb{E}$ is closed under taking kernels and cokernels in ${\rm mod}\C$.
\end{proposition}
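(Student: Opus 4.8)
The plan is to show that $\mathrm{def}\,\mathbb{E}$ is closed under kernels and cokernels by realizing every defect functor as a cokernel of a map between representable functors coming from a distinguished $n$-exangle, and then manipulating such presentations. First I would observe that, since $\C$ has weak-kernels, $\mathrm{mod}\,\C$ is abelian (by the well-known result recalled in Section 2.2), so the statement makes sense. Given a morphism $\eta\colon\widetilde{\delta}\to\widetilde{\rho}$ in $\mathrm{def}\,\mathbb{E}$ between two contravariant defects, coming from distinguished $n$-exangles $\Xd$ and $\Yr$, the key step is to lift $\eta$ to a morphism of the underlying $n$-exangles: using projectivity of the representable functors $\C(-,X^{n})$ and $\C(-,X^{n+1})$ in the relevant sense, I would first lift $\eta$ to a pair of maps $X^{n+1}\to Y^{n+1}$ and $X^{n}\to Y^{n}$ compatible with the deflations $d_X^{n}$, $d_Y^{n}$, then extend this to a full morphism $f^{\mr}\colon\Xd\to\Yr$ of distinguished $n$-exangles using the dual of \cite[Proposition 3.6(1)]{HLN} (as was done in the proof of Lemma \ref{prop3}).

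Once $\eta$ is represented by a genuine morphism $f^{\mr}$ of $n$-exangles, the next step is to form the mapping cone. Applying \rm{(EA2)} (or rather the construction used in the proof of Lemma \ref{prop2}) to $f^{\mr}$ produces a new distinguished $n$-exangle whose deflation is the map $\begin{bmatrix}f^{n+1}&d_Y^{n}\end{bmatrix}\colon X^{n+1}\oplus Y^{n}\to Y^{n+1}$, and whose associated contravariant defect fits into an exact sequence relating $\widetilde{\delta}$, $\widetilde{\rho}$, and the map $\widetilde{f}=\eta$. Chasing this exact sequence in the abelian category $\mathrm{mod}\,\C$ should identify $\mathrm{Coker}(\eta)$ with the contravariant defect of the mapping-cone $n$-exangle (hence in $\mathrm{def}\,\mathbb{E}$), and $\mathrm{Ker}(\eta)$ with the contravariant defect of a further $n$-exangle obtained dually — for the kernel I would instead use the mapping cocone together with \rm{(EA2$\op$)}, applied to a lift realizing $\eta$ as a map in the other direction, or build the relevant $n$-exangle by taking a weak-kernel presentation of $\mathrm{Ker}(\eta)$ and checking directly that it is a defect. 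Concretely, one notes $\mathrm{Ker}(\eta)$ has a presentation by representables obtained from the kernel of the induced map on presentations, and such a presentation can be realized by a distinguished $n$-exangle because every deflation extends to one.

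I expect the main obstacle to be the kernel case. For cokernels the mapping-cone construction directly hands us the required distinguished $n$-exangle, so closure under cokernels is essentially the computation already appearing in Lemma \ref{prop2}. For kernels, however, the naive dual argument wants to use shifted octahedra / a weak-pullback-type statement that, as the introduction explicitly warns, is \emph{not} available in a general $n$-exangulated category (only in the extriangulated case does one have shifted octahedrons). So the real work is to produce, from the data of $f^{\mr}$, a distinguished $n$-exangle whose contravariant defect is $\mathrm{Ker}(\eta)$ without invoking such a statement: the strategy I would pursue is to take a projective presentation $\C(-,Z)\to\C(-,X^{n+1})$ of $\mathrm{Ker}(\eta)$ inside $\mathrm{mod}\,\C$ (using that $\C$ has weak-kernels so this presentation exists and is finite), factor it appropriately through the $n$-exangle for $\widetilde{\delta}$, and then invoke the existence of distinguished $n$-exangles over arbitrary deflations together with \rm{(EA2$\op$)} to realize this presentation geometrically. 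Verifying that the resulting defect is exactly $\mathrm{Ker}(\eta)$, and that the construction does not secretly require a shifted octahedron, is the delicate point and will require careful bookkeeping with the long exact sequences of Hom-functors attached to $n$-exangles (conditions (a) and (b) in the definition of $n$-exangle).
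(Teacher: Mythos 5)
The cokernel half of your plan is essentially the paper's argument: lift $\eta$ via the Yoneda lemma and the dual of \cite[Proposition 3.6(1)]{HLN} to a morphism $f^{\mr}$ of distinguished $n$-exangles, pull $\rho$ back along $f^{n+1}$, take the good lift supplied by (EA2), and identify $\mathrm{Coker}(\eta)$ with the contravariant defect of the mapping-cone $n$-exangle ending in $\begin{pmatrix}f^{n+1}&d_Y^n\end{pmatrix}\colon X^{n+1}\oplus Y^n\to Y^{n+1}$. (Minor imprecision: the cone is taken of the good lift of $(1_{Y^0},f^{n+1})\colon (f^{n+1})^*\rho\to\rho$, not of $f^{\mr}$ itself, since mapping cones require the degree-$0$ component to be an identity; but this is exactly the construction you point to in Lemma \ref{prop2}.)

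The kernel half, however, has a genuine gap, and it is exactly where you flagged the difficulty. Your concrete fallback --- take a weak-kernel presentation of $\mathrm{Ker}(\eta)$ by representables and ``realize this presentation geometrically because every deflation extends to one'' --- fails as stated: the presentation map produced by weak kernels is an arbitrary morphism of $\C$, and there is no reason for it to be an $\mathfrak{s}$-deflation; if every finitely presented functor admitted such a realization, then all of $\mathrm{mod}\,\C$ would consist of defects, which is false in general (defects are effaceable, representables typically are not). Your alternative suggestion, the mapping cocone plus (EA2$\op$), dualizes in the wrong direction: $\eta$ is a morphism of \emph{contravariant} defects, so its kernel is a subfunctor of $\widetilde{\delta}$ and is not naturally computed by cocones. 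The missing idea (the paper's) is to prove the stronger statement that \emph{every} finitely presented subfunctor $L\subseteq\widetilde{\delta}$ is again a defect: choose an epimorphism $p\colon\C(-,Y^{n+1})\twoheadrightarrow L$, lift the composite $\C(-,Y^{n+1})\to\widetilde{\delta}$ through $\pi\colon\C(-,X^{n+1})\twoheadrightarrow\widetilde{\delta}$ using projectivity of representables, and let $f^{n+1}\colon Y^{n+1}\to X^{n+1}$ be its Yoneda correspondent. The pulled-back extension $(f^{n+1})^*\delta$ is automatically realized by a distinguished $n$-exangle (no deflation needs to be manufactured), and its contravariant defect is exactly $L$; the only nontrivial point, $\mathrm{Ker}\,p\subseteq\mathrm{Im}\,\C(-,d_Y^n)$, is verified using the exact Hom-sequence of the mapping-cone $n$-exangle obtained from (EA2) applied to $(1_{X^0},f^{n+1})$. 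That cone argument is precisely the substitute for the weak-pullback/shifted-octahedron step you were worried about, and it uses (EA2) and the cone, not (EA2$\op$) and the cocone. Since $\C$ has weak kernels, $\mathrm{Ker}(\eta)$ lies in $\mathrm{mod}\,\C$, so applying this to $L=\mathrm{Ker}(\eta)$ gives closure under kernels.
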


\begin{proof} Let $\langle X^{\mr}, \delta\rangle$ be a distinguished $n$-exangle in  $\C$ and $L$ a submodule of $\widetilde{\delta}$ in ${\rm mod}\C$. Then there is a surjection $p: \C(-, Y^{n+1})\rightarrow L$ since $L\in {\rm mod}\C$. Thus we obtain  the following commutative diagram in ${\rm mod}\C$

$$\xymatrix@=2.8em{&&&\C(-, Y^{n+1})\ar@{-->}[d]\ar[r]^{\qquad p}&L\ar[r]\ar@{^{(}->}[d]^{\alpha}&0\\
\C(-, X^0)\ar[r]^{\ \ \ \C(-, d_X^0)}&\cdots\ar[r]&\C(-, X^n)\ar[r]^{\C(-, d_X^n)\ \ }&\C(-, X^{n+1})\ar[r]^{\qquad  \pi}&\widetilde{\delta}\ar[r]&0.}$$
Here the dashed map exists by the projectivity of $\C(-, Y^{n+1})$. Then by the Yoneda lemma, there is a corresponding $f^{n+1}: Y^{n+1}\rightarrow X^{n+1}$ in $\C$. Now assume that $Y^{\mr}$ is an $\mathfrak{s}$-conflation which realizes $(f^{n+1})^*\delta\in \mathbb{E}(Y^{n+1}, X^0)$, then $(1_{X^0}, f^{n+1}): (f^{n+1})^*\delta\rightarrow \delta$ has a good lift $f^{\mr}: \langle Y^{\mr}, (f^{n+1})^*\delta\rangle\rightarrow \langle X^{\mr}, \delta\rangle$ which makes $\langle M_{f^{\mr}}, (d_Y^0)_*\delta\rangle$ a distinguished $n$-exangle by (EA2), that is, we have the following commutative diagram

$$\xymatrix{X^0\ar[r]^{d_{Y}^{0}}\ar@{=}[d]&Y^1\ar[r]\ar[d]_{f^{1}}&\cdots \ar[r]&Y^{n-1}\ar[d]_{f^{n-1}}\ar[r]^{d_{Y}^{n-1}}& Y^{n}\ar[d]_{f^{n}}\ar[r]^{d_{Y}^{n}}& Y^{n+1} \ar[d]_{f^{n+1}} \ar@{-->}[r]^{\qquad(f^{n+1})^*\delta}&\\
   X^0\ar[r]^{d_{X}^{0}}&X^1\ar[r]&\cdots \ar[r]&X^{n-1}\ar[r]^{d_X^{n-1}}& X^{n}\ar[r]^{d_{X}^{n}}& X^{n+1} \ar@{-->}[r]^{\delta}&}$$
such that
$$Y^1\xrightarrow{\tiny\begin{pmatrix}-d_Y^1\\f^1\end{pmatrix}}Y^2\oplus X^1\to\cdots\to Y^n\oplus X^{n-1}\xrightarrow{\tiny\begin{pmatrix}-d_Y^n&0\\f^n&d_X^{n-1}\end{pmatrix}}Y^{n+1}\oplus X^n\xrightarrow{\tiny\begin{pmatrix}f^{n+1}&d_X^n\end{pmatrix}}X^{n+1}
\xymatrix{\ar@{-->}[r]^{(d_Y^0)_*\delta}&}$$
is a distinguished $n$-exangle. Thus we obtain the following commutative diagram in ${\rm mod}\C$
$$\xymatrix@=2.8em{\C(-, Y^n)\ar[d]_{\C(-, f^n)}\ar[r]^{\C(-, d_Y^n)}&\C(-, Y^{n+1})\ar[d]^{\C(-, f^{n+1})}\ar[r]^{\qquad p}&L\ar[r]\ar@{^{(}->}[d]^{\alpha}&0\\
\C(-, X^n)\ar[r]^{\C(-, d_X^n)}&\C(-, X^{n+1})\ar[r]^{\qquad  \pi}&\widetilde{\delta}\ar[r]&0}$$
where the bottom sequence is exact. It is easy to check that ${\rm Im}\C(-, d_Y^n)\subseteq {\rm Ker}p$ since $\alpha$ is an injection. Now we claim that
${\rm Ker}p\subseteq{\rm Im}\C(-, d_Y^n)$. Indeed, for any object $M\in\C$, we have the following commutative diagram
$$\xymatrix@=2.8em{\C(M, Y^n)\ar[d]_{\C(M, f^n)}\ar[r]^{\C(M, d_Y^n)}&\C(M, Y^{n+1})\ar[d]^{\C(M, f^{n+1})}\ar[r]^{\qquad p_M}&L(M)\ar[r]\ar@{^{(}->}[d]^{\alpha_M}&0\\
\C(M, X^n)\ar[r]^{\C(M, d_X^n)}&\C(M, X^{n+1})\ar[r]^{\qquad  \pi_M}&\widetilde{\delta}(M)\ar[r]&0.}$$
If $a\in\C(M, Y^{n+1})$ satisfies $p_M(a)=0$, then $\pi_M\C(M, f^{n+1})(a)=\alpha_M p_M(a)=0$. There is $b\in\C(M, X^n)$ such that $\C(M, d_X^n)(b)=\C(M, f^{n+1})(a)$, that is, $d_X^nb=f^{n+1}a$, which implies that ${\footnotesize\begin{pmatrix}f^{n+1}& d_X^n\end{pmatrix}\begin{pmatrix}-a\\b\end{pmatrix}}=d_X^nb-f^{n+1}a=0$. Hence there is a morphism ${\footnotesize\begin{pmatrix}a'\\b'\end{pmatrix}}: M\rightarrow Y^n\oplus X^{n-1}$ which makes the following diagram commutative
$$\xymatrix@=4em{&M\ar@{-->}[dl]_{\tiny\begin{pmatrix}a'\\b'\end{pmatrix}}\ar[d]^{\tiny\begin{pmatrix}-a\\b\end{pmatrix}}&\\
Y^n\oplus X^{n-1}\ar[r]^{\tiny\begin{pmatrix}-d_Y^n&0\\f^n&d_X^{n-1}\end{pmatrix}}&Y^{n+1}\oplus X^n\ar[r]^{\tiny\qquad \begin{pmatrix}f^{n+1}&d_X^n\end{pmatrix}}&X^{n+1}.}$$
Hence ${\footnotesize\begin{pmatrix}-a\\b\end{pmatrix}=\begin{pmatrix}-d_Y^n&0\\f^n&d_X^{n-1}\end{pmatrix}\begin{pmatrix}a'\\b'\end{pmatrix}
=\begin{pmatrix}-d_Y^na'\\f^na'+d_X^{n-1}b'\end{pmatrix}}$, and $a=d_Y^na'=\C(M, d_Y^n)(a')$ which implies that ${\rm Ker}p_M\subseteq {\rm Im}\C(M, d_Y^n)$. So ${\rm Ker}p\subseteq {\rm Im}\C(-, d_Y^n)$, hence $L\in {\rm def}\mathbb{E}$. It is clearly that ${\rm def}\mathbb{E}$ is closed under taking kernels.

Next we claim that ${\rm def}\mathbb{E}$ is closed under taking cokernels. Let $\langle X^{\mr}, \delta\rangle$ and $\langle Y^{\mr}, \rho\rangle$ be distinguished $n$-exangles. Consider a morphism $\alpha: \widetilde{\delta}\rightarrow \widetilde{\rho}$. We shall show that ${\rm Coker}\alpha$ still belongs to ${\rm def}\mathbb{E}$. By the Yoneda lemma, the morphism $\alpha$ induces a morphism between presentation of $\widetilde{\delta}$ and $\widetilde{\rho}$, and there is a morphism of distinguished $n$-exangles $f^{\mr}: \langle X^{\mr}, \delta\rangle\rightarrow \langle Y^{\mr}, \rho\rangle$ by the dual of \cite[Proposition 3.6(1)]{HLN}. In particular, $(f^0)_*\delta=(f^{n+1})^*\rho$. Assume that $Z^{\mr}$ is an $\mathfrak{s}$-conflation which realizes $(f^{n+1})^*\rho\in \mathbb{E}(X^{n+1}, Y^0)$, $(1_{Y^0}, f^{n+1}): (f^{n+1})^*\rho\rightarrow \rho$  has a good lift $g^{\mr}: \langle Z^{\mr}, (f^{n+1})^*\rho\rangle\rightarrow \langle Y^{\mr}, \rho\rangle$ which makes $\langle M_{g^{\mr}}, (d_Z^0)_*\rho\rangle$ a distinguished $n$-exangle by (EA2). It is easy to check that there are morphisms of distinguished $n$-exangles with the following commutative diagram

$$\xymatrix@=3.3em{X^0\ar[r]^{d_{X}^{0}}\ar[d]^{f^0}&X^1\ar[r]\ar@{-->}[d]^{h^{1}}&\cdots \ar[r]&X^{n-1}\ar@{-->}[d]^{h^{n-1}}\ar[r]^{d_X^{n-1}}& X^{n}\ar@{-->}[d]^{h^{n}}\ar[r]^{d_{X}^{n}}& X^{n+1} \ar@{=}[d] \ar@{-->}[r]^{\delta}&\\
   Y^0\ar@{=}[d]\ar[r]^{d_{Z}^{0}}&Z^1\ar[d]^{g^1}\ar[r]&\cdots \ar[r]&Z^{n-1}\ar[d]^{g^{n-1}}\ar[r]^{d_Z^{n-1}}& Z^{n}\ar[d]^{g^n}\ar[r]^{d_{Z}^{n}}& X^{n+1} \ar[d]^{f^{n+1}}\ar@{-->}[r]^{\ \ \ (f^{n+1})^*\rho}&\\
   Y^0\ar[d]^{d_Z^0}\ar[r]^{d_Y^0}&Y^1\ar[r]\ar[d]^{\tiny\begin{pmatrix}0\\1\end{pmatrix}}&\cdots\ar[r]&Y^{n-1}\ar[r]^{d_Y^{n-1}}
   \ar[d]^{\tiny\begin{pmatrix}0\\1\end{pmatrix}}&Y^{n}\ar[r]^{d_Y^{n}}
   \ar[d]^{\tiny\begin{pmatrix}0\\1\end{pmatrix}}&Y^{n+1}\ar@{-->}[r]^{\rho}\ar@{=}[d]&\\
  Z^1\ar[r]^{\tiny\begin{pmatrix}-d_Z^1\\g^1\end{pmatrix}\ \ \ \ }&Z^2\oplus Y^1\ar[r]&\cdots\ar[r]&Z^n\oplus Y^{n-1}\ar[r]^{\tiny\begin{pmatrix}-d_Z^n&0\\g^n&d_Y^{n-1}\end{pmatrix}}&X^{n+1}\oplus Y^n\ar[r]^{\tiny\qquad \begin{pmatrix}f^{n+1}&d_Y^n\end{pmatrix}}&Y^{n+1}\ar@{-->}[r]^{\qquad (d_Z^0)_*\rho}&}$$
Here the dashed maps $h^i$ exist by (R0). Thus we get the following commutative diagram
$$\xymatrix@C=5em{\C(-, X^n)\ar[d]^{\C(-, g^nh^n)}\ar[r]^{\C(-, d_X^n)}&\C(-, X^{n+1})\ar[d]^{\C(-, f^{n+1})}\ar[r]^{\qquad \pi}&\widetilde{\delta}\ar[r]\ar[d]^{\alpha}&0\\
\C(-, Y^n)\ar[r]^{\C(-, d_Y^n)}\ar[d]^{\C(-, {\tiny\begin{pmatrix}0\\1\end{pmatrix}})}&\C(-, Y^{n+1})\ar@{=}[d]\ar[r]^{\qquad  p}&\widetilde{\rho}\ar[d]^\beta\ar[r]&0\\
\C(-, X^{n+1}\oplus Y^{n-1})\ar[r]^{\ \ \ \ \ \C(-, {\tiny\begin{pmatrix}f^{n+1}&d_Y^n\end{pmatrix}})}&\C(-, Y^{n+1})\ar[r]^{\qquad  q}&\widetilde{(d_Z^0)_*\rho}\ar[r]&0}$$
in ${\rm mod}\C$ with exact rows. Now we claim that the sequence
$$\xymatrix@C=4.5em{\C(-, X^{n+1})\ar[r]^{\qquad p\C(-, f^{n+1})}&\widetilde{\rho}\ar[r]^\beta&\widetilde{(d_Z^0)_*\rho}\ar[r]&0}$$
is exact. Note that $\beta p=q$ is epic, then $\beta$ is epic.
$$\beta p\C(-, f^{n+1})=q\C(-, f^{n+1})=q \C(-, {\scriptsize\begin{pmatrix}f^{n+1}&d_Y^n\end{pmatrix}})\C(-, {\scriptsize\begin{pmatrix}1\\0\end{pmatrix}})=0.$$
Hence ${\rm Im}(p \C(-, f^{n+1}))\subseteq {\rm Ker}\beta$. Consider the sequence
$$\xymatrix@C=4.5em{\C(M, X^{n+1})\ar[r]^{\qquad p\C(M, f^{n+1})}&\widetilde{\rho}(M)\ar[r]^{\beta_M}&\widetilde{(d_Z^0)_*\rho}(M)\ar[r]&0}$$
for any $M\in\C$. If $m\in\widetilde{\rho}(M)$ satisfies $\beta_M(m)=0$, then there is a morphism $r\in\C(M, Y^{n+1})$ such that $p_M(r)=m$ because $p_M$ is epic. Hence
$q_M(r)=\beta_Mp_M(r)=\beta_M(m)=0$. Thus there is ${\scriptsize \begin{pmatrix}s\\t\end{pmatrix}}\in\C(M, X^{n+1}\oplus Y^n)$ such that $\C(M, {\scriptsize \begin{pmatrix}f^{n+1}&d_Y^n\end{pmatrix}}){\scriptsize \begin{pmatrix}s\\t\end{pmatrix}}=r$ since the sequence
$$\xymatrix@C=5em{\C(M, X^{n+1}\oplus Y^{n-1})\ar[r]^{\ \ \ \ \ \C(M, {\tiny\begin{pmatrix}f^{n+1}&d_Y^n\end{pmatrix}})}&\C(M, Y^{n+1})\ar[r]^{\qquad  q_M}&\widetilde{(d_Z^0)_*\rho}(M)\ar[r]&0}$$
is exact. So $\C(M, {\scriptsize \begin{pmatrix}f^{n+1}&d_Y^n\end{pmatrix}}){\scriptsize \begin{pmatrix}s\\t\end{pmatrix}}={\scriptsize \begin{pmatrix}f^{n+1}&d_Y^n\end{pmatrix}}{\scriptsize \begin{pmatrix}s\\t\end{pmatrix}}=f^{n+1}s+d_Y^nt=r$ and $p_M\C(M, f^{n+1})(s)=p_M(f^{n+1}s)=p_M(r-d_Y^nt)=p_M(r)-p_M(d_Y^nt)=m-p_M\C(M, d_Y^n)(t)=m$. Then ${\rm Ker}\beta_M\subseteq {\rm Im}(p_M\C(M, f^{n+1}))$ which implies that ${\rm Ker}\beta\subseteq {\rm Im}(p\C(-, f^{n+1}))$. Since $\alpha\pi=p\C(-, f^{n+1})$, the sequence
$$\xymatrix@C=3em{\C(-, X^{n+1})\ar[r]^{\ \ \ \ \ \alpha\pi}&\widetilde{\rho}\ar[r]^\beta&\widetilde{(d_Z^0)_*\rho}\ar[r]&0}$$
is exact which implies the sequence
$$\xymatrix@C=3em{\widetilde{\delta}\ar[r]^{\alpha}&\widetilde{\rho}\ar[r]^\beta&\widetilde{(d_Z^0)_*\rho}\ar[r]&0}$$
is exact because $\pi$ is epic, that is, ${\rm Coker}\alpha=\widetilde{(d_Z^0)_*\rho}\in{\rm def}\mathbb{E}$, as desired.
\end{proof}

We shall study the properties of effaceable functors and Auslander's defects in ${\rm mod}\C$ for  an $n$-exangulated category $(\C, \E, \mathfrak{s})$. The notion of effaceable functors in ${\rm mod}\C$ for an extriangulated category was introduced in \cite{Oga}.

\begin{definition}\label{defect}
Let  $(\C, \mathbb{E}, \mathfrak{s})$ be an $n$-exangulated category. An object $F\in {\rm mod}\C$ is said to be {\it effaceable} if it satisfies the following condition:

 For any $X\in \C$ and any $x\in F(X)$, there exists an $\mathfrak{s}$-deflation $\alpha: Y\rightarrow X$ such that $F(\alpha)(x)=0$.

 We denote by ${\rm eff}\mathbb{E}$ the full subcategory of effaceable functors in ${\rm mod}\C$.
\end{definition}

 Recall that a {\it Serre subcategory} of
an abelian category is a subcategory which is closed under taking subobjects, quotients and extensions. We have the following result by a similar argument with \cite[Lemma 2.3]{Oga} and \cite[Proposition 2.5]{Oga}.

\begin{proposition}\label{Pro1} Let  $(\C, \mathbb{E}, \mathfrak{s})$ be an $n$-exangulated category. Then we have an equality ${\rm eff}\mathbb{E}={\rm def}\mathbb{E}$. In particular, ${\rm def}\mathbb{E}$ is a Serre subcategory in ${\rm mod}\C$.
\end{proposition}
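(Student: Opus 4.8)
The plan is to establish the equality ${\rm eff}\mathbb{E}={\rm def}\mathbb{E}$ by showing both inclusions, and then to deduce the Serre property from the previous proposition together with the closure properties of effaceable functors. The two inclusions are modelled on Ogawa's arguments (\cite[Lemma 2.3]{Oga} and \cite[Proposition 2.5]{Oga}), but we must be careful to use only the $n$-exangulated axioms available in our setting.

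First I would prove ${\rm def}\mathbb{E}\subseteq{\rm eff}\mathbb{E}$. Let $\langle X^{\mr},\delta\rangle$ be a distinguished $n$-exangle, so $\widetilde{\delta}$ fits into the exact sequence $\C(-,X^n)\xrightarrow{\C(-,d_X^n)}\C(-,X^{n+1})\xrightarrow{\pi}\widetilde{\delta}\rightarrow 0$. Given $M\in\C$ and $x\in\widetilde{\delta}(M)$, lift $x$ along the surjection $\pi_M$ to some $a\in\C(M,X^{n+1})$. Pull back the $n$-exangle along $a$: by the dual of \cite[Proposition 3.6(1)]{HLN} (or by (EA2$\op$)) there is a distinguished $n$-exangle $\langle Y^{\mr},a^*\delta\rangle$ with $Y^{n+1}=M$, $Y^n=:Y$, and $d_Y^n=:\alpha\colon Y\rightarrow M$ an $\mathfrak{s}$-deflation, together with a morphism of $n$-exangles whose last component is $a$. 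Applying $\C(M,-)$ and chasing the resulting commutative ladder shows that $\widetilde{\delta}(\alpha)(x)=0$, since $\alpha=d_Y^n$ is the rightmost differential of the pulled-back exangle and the class of $\delta$ vanishes after composing with it. Hence $\widetilde{\delta}$ is effaceable, and since ${\rm eff}\mathbb{E}$ is closed under isomorphisms this gives the inclusion.

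Next I would prove ${\rm eff}\mathbb{E}\subseteq{\rm def}\mathbb{E}$. Let $F\in{\rm eff}\mathbb{E}$; since $\C$ has weak-kernels, ${\rm mod}\C$ is abelian (Lemma in \S2.2), so there is a presentation $\C(-,X^n)\xrightarrow{\C(-,d)}\C(-,X^{n+1})\xrightarrow{\pi}F\rightarrow 0$. Set $x:=\pi_{X^{n+1}}(1_{X^{n+1}})\in F(X^{n+1})$. By effaceability there is an $\mathfrak{s}$-deflation $\alpha\colon Y\rightarrow X^{n+1}$ with $F(\alpha)(x)=0$; extend $\alpha$ to a distinguished $n$-exangle $\langle Y^{\mr},\delta\rangle$ with $Y^n=Y$, $Y^{n+1}=X^{n+1}$, $d_Y^n=\alpha$. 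A diagram chase, comparing the presentation of $F$ with the exact sequence $\C(-,Y^n)\xrightarrow{\C(-,\alpha)}\C(-,X^{n+1})\xrightarrow{}\widetilde{\delta}\rightarrow 0$ coming from this exangle, produces a morphism $F\rightarrow\widetilde{\delta}$; combined with the universal property of $\pi$ and the vanishing $F(\alpha)(x)=0$ one checks this is an isomorphism (the argument is the ${\rm mod}\C$-analogue of Ogawa's, and does not need octahedra). Thus $F\cong\widetilde{\delta}\in{\rm def}\mathbb{E}$.

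Finally, the Serre property. By the previous proposition ${\rm def}\mathbb{E}={\rm eff}\mathbb{E}$ is closed under kernels and cokernels in ${\rm mod}\C$, hence under subobjects and quotients. It remains to verify closure under extensions, which is most transparent in the guise ${\rm eff}\mathbb{E}$: given a short exact sequence $0\rightarrow F'\rightarrow F\rightarrow F''\rightarrow 0$ in ${\rm mod}\C$ with $F',F''$ effaceable, and $x\in F(X)$, push $x$ to $\bar{x}\in F''(X)$, efface it by an $\mathfrak{s}$-deflation $\alpha\colon Y\rightarrow X$ so that $F(\alpha)(x)$ lifts to an element of $F'(Y)$, then efface that element by a further $\mathfrak{s}$-deflation $\beta\colon Z\rightarrow Y$; by (EA1) the composite $\alpha\beta$ is an $\mathfrak{s}$-deflation and $F(\alpha\beta)(x)=0$. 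So $F\in{\rm eff}\mathbb{E}$, and ${\rm def}\mathbb{E}$ is a Serre subcategory of ${\rm mod}\C$. I expect the main obstacle to be the diagram chase in the inclusion ${\rm eff}\mathbb{E}\subseteq{\rm def}\mathbb{E}$: producing the isomorphism $F\cong\widetilde{\delta}$ requires carefully tracking how the chosen $\mathfrak{s}$-deflation $\alpha$ interacts with an arbitrary presentation of $F$, and verifying that the comparison map is injective as well as surjective.
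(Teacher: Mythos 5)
Your inclusion ${\rm def}\mathbb{E}\subseteq{\rm eff}\mathbb{E}$ is fine (the point being that $a\circ d_Y^n=d_X^n\circ f^n$ factors through $d_X^n$, so its class in $\widetilde{\delta}(Y^n)$ vanishes), and your extension-closure argument via (EA1) is the right one. The genuine gap is in ${\rm eff}\mathbb{E}\subseteq{\rm def}\mathbb{E}$. From $F(\alpha)(x)=0$ for the canonical element $x=\pi_{X^{n+1}}(1_{X^{n+1}})$ you only learn that $\alpha$ factors through $d$, say $\alpha=dg$. This gives ${\rm Im}\,\C(-,\alpha)\subseteq{\rm Im}\,\C(-,d)$, hence a natural \emph{epimorphism} $\widetilde{\delta}\twoheadrightarrow F$ between the two quotients of $\C(-,X^{n+1})$; a morphism $F\rightarrow\widetilde{\delta}$ in the direction you assert would instead require $d$ to factor through $\alpha$, which is not given. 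Moreover the map $\widetilde{\delta}\rightarrow F$ is not an isomorphism in general, for \emph{any} admissible choice of effacing deflation: take $F=0$ with presentation $\C(-,B)\xrightarrow{\;=\;}\C(-,B)\rightarrow 0$; then every $\mathfrak{s}$-deflation $\alpha\colon Y\rightarrow B$ effaces the canonical element, but for a non-split $\alpha$ (e.g.\ $\Lambda\rightarrow k$ in $\mathrm{mod}\,k[x]/(x^2)$ with $n=1$) one has $\widetilde{\delta}\neq 0=F$. So the claimed isomorphism $F\cong\widetilde{\delta}$, which you yourself flag as the delicate step, actually fails.

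The repair is short and uses the preceding proposition rather than a finer diagram chase: the effacement argument shows every effaceable $F\in{\rm mod}\C$ is a \emph{quotient} of a contravariant defect $\widetilde{\delta}$. By the previous proposition (whose proof shows directly that every submodule of a defect is a defect, and that cokernels of morphisms between defects are defects), the kernel of $\widetilde{\delta}\twoheadrightarrow F$ lies in ${\rm def}\mathbb{E}$ and hence so does its cokernel $F$. With this substitution your proof goes through and coincides with the Ogawa-style argument the paper invokes; note also that for the Serre property one should quote closure under subobjects (from the proof of the previous proposition), since abstract closure under kernels and cokernels of maps \emph{between} defects does not by itself yield closure under arbitrary subobjects.
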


For an $n$-exangulated category $(\C,\mathbb{E},\mathfrak{s})$, the notion of closed subbifunctors of $\mathbb{E}$ was
introduced in \cite{HLN}. Let $(\C, \mathbb{E}, \mathfrak{s})$ be an $n$-exangulated category.
An additive subbifunctor $\mathbb{F}$
of $\E$ is called {\it  closed on the right} (resp. {\it closed on the left}) if
 $\mathbb{F}(-,X^0)\xLongrightarrow{(d_X^0)_{*}}\mathbb{F}(-,X^1)\xLongrightarrow{(d_X^1)_{*}}\mathbb{F}(-,X^2)
$ (resp. $\mathbb{F}(X^{n+1},- )\xLongrightarrow{(d_X^n)^{*}}\mathbb{F}(X^n,-)\xLongrightarrow{(d_X^{n-1})^{*}}\mathbb{F}(X^{n-1},-)
$)
is exact for any $\mathfrak{s}|_{\mathbb{F}}$-conflation $X^{\mr}$, where $\mathfrak{s}|_{\mathbb{F}}$ is the restriction of $\mathfrak{s}$ onto $\mathbb{F}$.
It follows from \cite[Lemma 3.15]{HLN} that $\mathbb{F}$ is closed on the right if and only if $\mathbb{F}$ is closed on the left for any additive subbifunctor $\mathbb{F}\subseteq \mathbb{E}$. In this case, we simply say $\mathbb{F}\subseteq \mathbb{E}$ is {\it closed}.
The most important property of closed subbifunctors is the following result.
\begin{proposition}\label{Pro}{\rm \cite[Proposition 3.16]{HLN}}
 Let $(\C, \mathbb{E}, \mathfrak{s})$ be an $n$-exangulated category  and  any additive subbifunctor $\mathbb{F}\subseteq \mathbb{E}$. The following are equivalent.

 {\rm (1)}  $(\C, \mathbb{F}, \mathfrak{s}|_\mathbb{F})$ is an $n$-exangulated;

 {\rm (2)} $\mathfrak{s}|_\mathbb{F}$-inflation is closed under composition;

 {\rm (3)} $\mathfrak{s}|_\mathbb{F}$-deflation is closed under composition;

{\rm (4)}  $\mathbb{F}\subseteq \mathbb{E}$ is closed.
\end{proposition}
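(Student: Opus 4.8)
The plan is to separate the "global" condition (1) from the three pointwise conditions (2), (3), (4). First I would observe that the restricted triplet $(\C,\mathbb{F},\mathfrak{s}|_{\mathbb{F}})$ automatically inherits almost all of the axioms. Indeed, $\mathfrak{s}|_{\mathbb{F}}$ is an exact realization of $\mathbb{F}$: condition (R0) descends from $\mathfrak{s}$ because a morphism of $\mathbb{F}$-extensions is in particular a morphism of $\mathbb{E}$-extensions; condition (R1) holds because, for $\delta\in\mathbb{F}(C,A)$, the transformations $\delta\ssh$ and $\delta\ush$ factor through the pointwise monomorphisms $\mathbb{F}(-,A)\hookrightarrow\mathbb{E}(-,A)$ and $\mathbb{F}(C,-)\hookrightarrow\mathbb{E}(C,-)$, so the functor sequences appearing in the definition of an $n$-exangle have the same (co)kernels whether one uses $\mathbb{E}$ or $\mathbb{F}$; and (R2) is immediate. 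The same idea shows $(\mathrm{EA2})$ and $(\mathrm{EA2}\op)$ are inherited for free: given the input data of $(\mathrm{EA2})$ over $\mathbb{F}$, choose a good lift $f^{\mr}$ over $(\C,\mathbb{E},\mathfrak{s})$; since $\mathbb{F}$ is a subbifunctor we have $(d_X^0)\sas\rho\in\mathbb{F}$ and $\mathfrak{s}|_{\mathbb{F}}((d_X^0)\sas\rho)=\mathfrak{s}((d_X^0)\sas\rho)=[M_f^{\mr}]$, so $f^{\mr}$ is already good over $\mathbb{F}$. Hence (1) is equivalent to the single axiom $(\mathrm{EA1})$ for $(\C,\mathbb{F},\mathfrak{s}|_{\mathbb{F}})$, which is precisely the conjunction of (2) and (3).

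It then remains to prove $(2)\Leftrightarrow(3)\Leftrightarrow(4)$. Here I would invoke the cited \cite[Lemma 3.15]{HLN}, that a subbifunctor is closed on the right if and only if it is closed on the left, so that it is enough to prove that (2) is equivalent to $\mathbb{F}$ being closed on the right, the equivalence of (3) with $\mathbb{F}$ being closed on the left being dual. For the direction from (2) to closedness on the right: take an $\mathfrak{s}|_{\mathbb{F}}$-conflation $\langle X^{\mr},\delta\rangle$ and an $\varepsilon\in\mathbb{F}(T,X^1)$ with $(d_X^1)\sas\varepsilon=0$; realizing $\varepsilon$ by an $\mathfrak{s}|_{\mathbb{F}}$-conflation $\langle E^{\mr},\varepsilon\rangle$ and using exactness of the covariant functor sequence attached to it, the vanishing $(d_X^1)\sas\varepsilon=0$ forces $d_X^1$ to factor through the inflation $d_E^0$; composing $d_E^0$ with $d_X^0$ (an $\mathfrak{s}|_{\mathbb{F}}$-inflation by (2)) and running the realization/pushout argument attached to the $n$-exangle $\langle X^{\mr},\delta\rangle$ then produces the desired preimage of $\varepsilon$ in $\mathbb{F}(T,X^0)$. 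For the converse: given $\mathfrak{s}|_{\mathbb{F}}$-inflations $f$ and $g$, extend each to an $\mathfrak{s}|_{\mathbb{F}}$-conflation, glue the two conflations by a mapping cone produced from $(\mathrm{EA2})$ to obtain a conflation whose inflation is $gf$, and verify that the extension attached to the result lies in $\mathbb{F}$ — this is exactly where closedness is used, since it keeps the relevant induced sequences of $\mathbb{F}$-values exact so that the assembled extension cannot leave $\mathbb{F}$.

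The hard part will be this last step, "closed $\Rightarrow$ composites of $\mathfrak{s}|_{\mathbb{F}}$-inflations are again $\mathfrak{s}|_{\mathbb{F}}$-inflations": since $\mathbb{F}$ is only a subbifunctor, one has no formal control over the $\mathbb{E}$-extension attached to $gf$, and — crucially, because the paper works in a setting without shifted octahedrons — one must build a higher analogue of the $3\times3$ lemma using only the mapping-cone axioms $(\mathrm{EA2})$ and $(\mathrm{EA2}\op)$, and then feed the closedness exact sequences into the resulting diagram to force the composite's extension into $\mathbb{F}$. The bookkeeping in that diagram, tracking which pieces of the assembled extension are pushforwards or pullbacks of $\delta$ and $\rho$ and to which intermediate conflations the closedness condition must be applied, is the delicate point; once it is set up, everything else is formal.
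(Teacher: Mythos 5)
First, a point of comparison: the paper does not prove this proposition at all --- it is quoted verbatim from \cite[Proposition 3.16]{HLN} --- so there is no in-paper argument to measure yours against; a citation would have sufficed. Judged on its own, the first half of your plan is correct and cleanly argued: $(\C,\mathbb{F},\mathfrak{s}|_{\mathbb{F}})$ really does inherit (R0)--(R2) (the kernels of $\delta_\sharp$ and $\delta^\sharp$ are unchanged when the target is corestricted along the pointwise monomorphisms $\mathbb{F}\hookrightarrow\mathbb{E}$) and (EA2), (EA2$^{\mathrm{op}}$) (a good lift over $\mathbb{E}$ is already good over $\mathbb{F}$, since $(d_X^0)_*\rho\in\mathbb{F}$ and $\mathfrak{s}|_{\mathbb{F}}$ is literally $\mathfrak{s}$), so (1) is equivalent to (EA1), i.e.\ to (2) together with (3), and reducing the rest to (4) via \cite[Lemma 3.15]{HLN} is a sensible architecture.

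The genuine gap is that the substantive equivalences with (4) are not actually proved. In the direction (2) $\Rightarrow$ closed, your sketch stops where the work begins: after factoring $d_X^1$ through $d_E^0$ and forming the $\mathfrak{s}|_{\mathbb{F}}$-conflation $\langle W^{\mr},\omega\rangle$ on the composite inflation $d_E^0d_X^0$, you assert that ``the realization/pushout argument produces the desired preimage''. Already in the $n=1$/exact-category model the missing content is a $3\times 3$-type comparison giving the relations $(d_X^0)_*\omega=p^*\varepsilon$ and that the third conflation $X^2\rightarrowtail W\twoheadrightarrow T$ has extension $(d_X^1)_*\varepsilon=0$, hence splits; only then does a section $s$ of $p$ yield $\varepsilon=(d_X^0)_*(s^*\omega)$ with $s^*\omega\in\mathbb{F}(T,X^0)$. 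For general $n$ that diagram and those identities must be assembled from (EA2) good lifts and mapping cones, and none of this bookkeeping appears. The converse, closed $\Rightarrow$ (2), you explicitly defer as ``the hard part'', and the proposed mechanism (``feed the closedness exact sequences into the diagram'') is not yet an argument: closedness is an exactness condition on $\mathbb{F}$-values over $\mathbb{F}$-conflations, and it is not formal that it forces the $\mathbb{E}$-extension attached to $gf$ into $\mathbb{F}$; the tempting shortcut via exactness of $\mathbb{E}(C',A)\to\mathbb{E}(W,A)\to\mathbb{E}(C,A)$ is not available, since in an $n$-exangulated category one only has exactness one step into $\mathbb{E}$ (\cite[Lemma 3.5]{HLN}), and the weak-pullback/shifted-octahedron devices used in the extriangulated case are exactly what this paper says must be avoided here. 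So the two key implications remain open in your write-up; as the paper does, either cite \cite[Proposition 3.16]{HLN} or reproduce its proof in full.
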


The following result is the $n$-exangulated version of \cite[Lemma 3.1]{Eno}, however the proof in here is simpler than that of \cite[Lemma 3.1]{Eno}.
\begin{lemma} Let  $(\C, \mathbb{E}, \mathfrak{s})$ be an $n$-exangulated category and $\mathbb{F}$ a closed subbifunctor of $\mathbb{E}$. Suppose that we have two elements $\rho\in \mathbb{F}(Y^{n+1}, Y^0)$ and $\delta\in \mathbb{E}(X^{n+1}, X^0)$. If $\widetilde{\delta}$ and $\widetilde{\rho}$ are isomorphic, then $\delta\in \mathbb{F}(Y^{n+1}, Y^0)$.
\end{lemma}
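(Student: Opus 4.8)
The approach uses only that $\mathbb{F}$, being a subbifunctor of $\mathbb{E}$, is stable under the pushout maps $(-)_{\ast}$ and the pullback maps $(-)^{\ast}$, together with the functoriality of the defect; in the argument I describe, the closedness of $\mathbb{F}$ in the sense of Proposition \ref{Pro} is not needed, which is in keeping with the fact that this proof is shorter than that of \cite[Lemma 3.1]{Eno}. Fix distinguished $n$-exangles $\langle X^{\mr},\delta\rangle$ and $\langle Y^{\mr},\rho\rangle$ realizing $\delta$ and $\rho$, so that we have presentations $\C(-,X^n)\xrightarrow{\C(-,d_X^n)}\C(-,X^{n+1})\xrightarrow{\pi_X}\widetilde{\delta}\to 0$ and $\C(-,Y^n)\xrightarrow{\C(-,d_Y^n)}\C(-,Y^{n+1})\xrightarrow{\pi_Y}\widetilde{\rho}\to 0$ in ${\rm mod}\C$, and let $\alpha\colon\widetilde{\delta}\xrightarrow{\ \sim\ }\widetilde{\rho}$ be the given isomorphism.

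The first and main step is to realize $\alpha$ and $\alpha^{-1}$ by morphisms of distinguished $n$-exangles. By projectivity of $\C(-,X^{n+1})$, choose $f^{n+1}\in\C(X^{n+1},Y^{n+1})$ with $\pi_Y\C(-,f^{n+1})=\alpha\pi_X$. Then $\pi_Y\C(-,f^{n+1})\C(-,d_X^n)=\alpha\pi_X\C(-,d_X^n)=0$, so by projectivity of $\C(-,X^n)$ there is $f^n\in\C(X^n,Y^n)$ with $d_Y^nf^n=f^{n+1}d_X^n$. Applying the dual of \cite[Proposition 3.6(1)]{HLN} extends the commutative square $(f^n,f^{n+1})$ to a morphism of distinguished $n$-exangles $f^{\mr}\colon\langle X^{\mr},\delta\rangle\to\langle Y^{\mr},\rho\rangle$; in particular $(f^0)_{\ast}\delta=(f^{n+1})^{\ast}\rho$, while the morphism $\widetilde{f}$ induced on defects satisfies $\widetilde{f}\pi_X=\pi_Y\C(-,f^{n+1})=\alpha\pi_X$, hence $\widetilde{f}=\alpha$ since $\pi_X$ is epic. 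The same construction applied to $\alpha^{-1}$ yields a morphism of distinguished $n$-exangles $g^{\mr}\colon\langle Y^{\mr},\rho\rangle\to\langle X^{\mr},\delta\rangle$ with $(g^0)_{\ast}\rho=(g^{n+1})^{\ast}\delta$ and $\widetilde{g}=\alpha^{-1}$.

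Next I would exploit the composite $g^{\mr}f^{\mr}\colon\langle X^{\mr},\delta\rangle\to\langle X^{\mr},\delta\rangle$. By functoriality of the defect construction, $\widetilde{g^{\mr}f^{\mr}}=\widetilde{g}\,\widetilde{f}=\alpha^{-1}\alpha=\id_{\widetilde{\delta}}$, so $\pi_X\C(-,g^{n+1}f^{n+1})=\pi_X$; evaluating at $X^{n+1}$ and applying both sides to $1_{X^{n+1}}$ shows $1_{X^{n+1}}-g^{n+1}f^{n+1}\in{\rm Im}\,\C(X^{n+1},d_X^n)$, i.e.\ $g^{n+1}f^{n+1}=1_{X^{n+1}}-d_X^nt$ for some $t\in\C(X^{n+1},X^n)$. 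Since $\langle X^{\mr},\delta\rangle$ is an $n$-exangle, $(d_X^n)^{\ast}\delta=0$, so $(g^{n+1}f^{n+1})^{\ast}\delta=\delta-t^{\ast}(d_X^n)^{\ast}\delta=\delta$; and as $g^{\mr}f^{\mr}$ is a morphism of $n$-exangles, $(g^0f^0)_{\ast}\delta=(g^{n+1}f^{n+1})^{\ast}\delta=\delta$, that is, $(g^0)_{\ast}(f^0)_{\ast}\delta=\delta$.

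The conclusion is then immediate: $(f^0)_{\ast}\delta=(f^{n+1})^{\ast}\rho$ lies in $\mathbb{F}(X^{n+1},Y^0)$ because $\rho\in\mathbb{F}(Y^{n+1},Y^0)$ and $\mathbb{F}$ is stable under pullback, whence $\delta=(g^0)_{\ast}\bigl((f^0)_{\ast}\delta\bigr)\in\mathbb{F}(X^{n+1},X^0)$ because $\mathbb{F}$ is stable under pushout. I expect the only genuine difficulty to be the first step: obtaining $f^{\mr}$ and $g^{\mr}$ as bona fide morphisms of distinguished $n$-exangles (not mere chain maps), so that the two compatibilities $(f^0)_{\ast}\delta=(f^{n+1})^{\ast}\rho$ and $\widetilde{f}=\alpha$ are available simultaneously; once that is secured, the remainder is the short computation above together with the defining properties of a subbifunctor.
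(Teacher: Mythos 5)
Your proof is correct, and it diverges from the paper's argument at the decisive final step. The first half is the same as the paper's: lift the isomorphism $\alpha\colon\widetilde{\delta}\to\widetilde{\rho}$ to a commuting square $(f^n,f^{n+1})$ using projectivity of representables and Yoneda, then extend it to a morphism of distinguished $n$-exangles $f^{\mr}$ by the dual of \cite[Proposition 3.6(1)]{HLN}, so that $(f^0)_{\ast}\delta=(f^{n+1})^{\ast}\rho\in\mathbb{F}(X^{n+1},Y^0)$. At that point the paper simply invokes \cite[Lemma 3.14]{HLN} to pass from $(f^0)_{\ast}\delta\in\mathbb{F}$ to $\delta\in\mathbb{F}$, whereas you make the argument self-contained: you also lift $\alpha^{-1}$ to $g^{\mr}$, observe that $\widetilde{g^{\mr}f^{\mr}}=\mathrm{id}$ forces $1_{X^{n+1}}-g^{n+1}f^{n+1}$ to factor through $d_X^n$, and use $(d_X^n)^{\ast}\delta=0$ together with the compatibility $(g^0f^0)_{\ast}\delta=(g^{n+1}f^{n+1})^{\ast}\delta$ to get $(g^0)_{\ast}(f^0)_{\ast}\delta=\delta$, so that $\delta\in\mathbb{F}(X^{n+1},X^0)$ follows from stability of $\mathbb{F}$ under pullback and pushout alone. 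What each approach buys: the paper's proof is shorter on the page but outsources the key closure step to the cited lemma; yours costs a second lifting and a short computation but avoids that citation entirely and makes explicit that only the additive subbifunctor property of $\mathbb{F}$ is used (closedness plays no role), which is a mild sharpening of the stated hypotheses. Note also that the conclusion of the lemma as printed, $\delta\in\mathbb{F}(Y^{n+1},Y^0)$, is a typo for $\delta\in\mathbb{F}(X^{n+1},X^0)$, and your argument proves the correct statement.
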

\begin{proof} Let $\mathfrak{s}(\delta)=[ \xymatrix{X^0\ar[r]^{d_X^0}&X^1\ar[r]^{d_X^1}&\cdots\ar[r]&X^{n-1}\ar[r]^{d_X^{n-1}}&X^n\ar[r]^{d_X^n}&X^{n+1}} ]$ and

$\mathfrak{s}(\rho)=[\xymatrix{ Y^0\ar[r]^{d_Y^0}&Y^1\ar[r]^{d_Y^1}&\cdots\ar[r]&Y^{n-1}\ar[r]^{d_Y^{n-1}}&Y^n\ar[r]^{d_Y^n}&Y^{n+1} }]$.  Without loss of generality, we can assume that $\widetilde{\rho}=\widetilde{\delta}$. By assumption, we have the following commutative diagram in ${\rm mod}\C$
$$\xymatrix{\C(-, X^n)\ar@{-->}[d]_{\C(-, f^n)}\ar[r]^{\C(-, d_X^{n})}&\C(-, X^{n+1})\ar@{-->}[d]^{\C(-, f^{n+1})}\ar[r]&\widetilde{\delta}\ar[r]\ar@{=}[d]&0\\
\C(-, Y^n)\ar[r]^{\C(-, d_Y^{n})}&\C(-, Y^{n+1})\ar[r]&\widetilde{\rho}\ar[r]&0}$$
Then there are dashed maps which make the diagram commutative by the projectivity of representable functors. By the Yoneda lemma and the dual of \cite[Proposition 3.6(1)]{HLN}, there is a morphism of distinguished $n$-exangles $f^{\mr}: \langle X^{\mr},\delta\rangle \rightarrow \langle Y^{\mr},\rho\rangle$. In particular,  $(f^0)_*\delta=(f^{n+1})^*\rho$. Since $\rho\in \mathbb{F}(X^{n+1}, X^0)$,  $(f^0)_*\delta=(f^{n+1})^*\rho\in \mathbb{F}(Y^{n+1}, X^0)$. Hence $\delta\in \mathbb{F}(Y^{n+1}, Y^0)$ by \cite[Lemma 3.14]{HLN}.
\end{proof}

\begin{theorem}\label{main47}
Let  $(\C, \mathbb{E}, \mathfrak{s})$ be a skeletally small $n$-exangulated category with weak-kernel. Then the map $\mathbb{F}\mapsto {\rm def}\mathbb{F}$ gives an isomorphism of the following posets, where the poset structures are given by inclusion.

{\rm (1)} The poset of closed subbifunctors of $\mathbb{E}$;

{\rm (2)} The poset of  Serre subcategories of ${\rm def}\mathbb{E}$.
\end{theorem}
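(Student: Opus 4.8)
The plan is to construct mutually inverse order-preserving maps between the two posets and verify that they are well defined. The forward map sends a closed subbifunctor $\mathbb F\subseteq\mathbb E$ to $\mathrm{def}\mathbb F$, the full subcategory of $\mathrm{mod}\,\C$ consisting of functors isomorphic to contravariant defects of $\mathfrak s|_{\mathbb F}$-distinguished $n$-exangles. Since $(\C,\mathbb F,\mathfrak s|_{\mathbb F})$ is itself an $n$-exangulated category by Proposition \ref{Pro}, the preceding Proposition \ref{Pro1} applied to $\mathbb F$ shows that $\mathrm{def}\mathbb F$ is a Serre subcategory of $\mathrm{mod}\,\C$; and since every $\mathfrak s|_{\mathbb F}$-conflation is in particular an $\mathfrak s$-conflation, $\mathrm{def}\mathbb F\subseteq\mathrm{def}\mathbb E$, so $\mathrm{def}\mathbb F$ is a Serre subcategory of $\mathrm{def}\mathbb E$. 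Monotonicity of $\mathbb F\mapsto\mathrm{def}\mathbb F$ is immediate from the definitions. For the backward map, given a Serre subcategory $\mathcal S\subseteq\mathrm{def}\mathbb E$, I would define, for each pair $A,C\in\C$,
$$\mathbb F_{\mathcal S}(C,A)=\{\,\delta\in\mathbb E(C,A)\ :\ \widetilde\delta\in\mathcal S\,\}.$$
The first task is to check that $\mathbb F_{\mathcal S}$ is an additive subbifunctor of $\mathbb E$. Closure under the maps $a_*$ and $c^*$ follows because a morphism of extensions $(a,c)\colon\delta\to\delta'$ induces, via the Yoneda lemma and the dual of \cite[Proposition 3.6(1)]{HLN}, a morphism of distinguished $n$-exangles, hence a morphism $\widetilde{\delta}\to\widetilde{\delta'}$ in $\mathrm{mod}\,\C$ (indeed one built naturally as in the last lemma of the excerpt), and one then uses that $\mathcal S$ is closed under images together with a small diagram chase relating $\widetilde{a_*\delta}$ and $\widetilde{c^*\delta}$ to $\widetilde\delta$. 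Additivity uses that $\widetilde{\delta\oplus\delta'}\cong\widetilde\delta\oplus\widetilde{\delta'}$ and that Serre subcategories are closed under finite direct sums.

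The crucial step is to show $\mathbb F_{\mathcal S}$ is \emph{closed}, equivalently (Proposition \ref{Pro}) that $\mathfrak s|_{\mathbb F_{\mathcal S}}$-deflations are closed under composition. Here I would use Proposition \ref{Pro1} in the form $\mathrm{eff}\mathbb E=\mathrm{def}\mathbb E$ to work with the effaceability description, which behaves well under the relevant exact sequences. Concretely: given two composable $\mathfrak s|_{\mathbb F_{\mathcal S}}$-deflations, (EA1) gives that their composite is an $\mathfrak s$-deflation; realizing everything by distinguished $n$-exangles and applying the octahedron-type axiom (EA2) produces a distinguished $n$-exangle whose defect sits in a short exact sequence in $\mathrm{mod}\,\C$ with the two given defects (which lie in $\mathcal S$) as its outer terms — so closure of $\mathcal S$ under extensions forces the composite's defect into $\mathcal S$, i.e. into $\mathbb F_{\mathcal S}$. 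This is exactly the point where Enomoto's argument in \cite{Eno} used shifted octahedrons / weak pullbacks, and as the authors flag in the introduction, that tool is unavailable here; so the main obstacle is to extract the needed short exact sequence of defects purely from (EA2) and the functor-category bookkeeping already developed in Section 3 (the $\Theta$, $\Phi$ machinery and the mapping-cone computations), avoiding any appeal to pullbacks. I expect this to be the technical heart of the proof.

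Finally I would verify the two composites are identities. For $\mathbb F\mapsto\mathrm{def}\mathbb F\mapsto\mathbb F_{\mathrm{def}\mathbb F}$: one inclusion is clear since $\delta\in\mathbb F$ gives $\widetilde\delta\in\mathrm{def}\mathbb F$; the reverse inclusion $\mathbb F_{\mathrm{def}\mathbb F}\subseteq\mathbb F$ is precisely the content of the last lemma of the excerpt — if $\widetilde\delta$ is isomorphic to the defect $\widetilde\rho$ of some $\rho\in\mathbb F$, then $\delta\in\mathbb F$. For $\mathcal S\mapsto\mathbb F_{\mathcal S}\mapsto\mathrm{def}\mathbb F_{\mathcal S}$: the inclusion $\mathrm{def}\mathbb F_{\mathcal S}\subseteq\mathcal S$ is immediate from the definition of $\mathbb F_{\mathcal S}$, while $\mathcal S\subseteq\mathrm{def}\mathbb F_{\mathcal S}$ uses that every $F\in\mathcal S\subseteq\mathrm{def}\mathbb E$ is, by definition of $\mathrm{def}\mathbb E$, isomorphic to $\widetilde\delta$ for some $\delta\in\mathbb E$, and then $\widetilde\delta\cong F\in\mathcal S$ says exactly $\delta\in\mathbb F_{\mathcal S}$, so $F\in\mathrm{def}\mathbb F_{\mathcal S}$. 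Both maps are inclusion-preserving and now shown to be mutually inverse, giving the claimed poset isomorphism. Throughout I would lean on the skeletal smallness and the weak-kernel hypothesis only where needed, namely to guarantee that $\mathrm{mod}\,\C$ is abelian (Lemma in \S2.2) so that "Serre subcategory" and the diagram chases make sense.
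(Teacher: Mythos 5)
Your skeleton coincides with the paper's: the same two assignments $\mathbb{F}\mapsto{\rm def}\mathbb{F}$ and $\mathcal{S}\mapsto\mathbb{F}(\mathcal{S})=\{\delta:\widetilde{\delta}\in\mathcal{S}\}$, the same use of Propositions \ref{Pro1} and \ref{Pro} for well-definedness of ${\rm def}(-)$, and the same use of the lemma preceding the theorem to get $\mathbb{F}_{{\rm def}\mathbb{F}}\subseteq\mathbb{F}$. However, the technical content is left unfinished, and it is also mislocated. You dispose of the subbifunctor property of $\mathbb{F}(\mathcal{S})$ with ``closed under images together with a small diagram chase,'' but this is precisely where the paper does the work that replaces Enomoto's weak-pullback lemma. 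For $c^\ast\delta$ (the pullback case $f^{n+1}\colon Y^{n+1}\to X^{n+1}$) the induced morphism of defects goes $\alpha\colon\widetilde{(f^{n+1})^\ast\delta}\to\widetilde{\delta}$, and what one must prove is that $\alpha$ is a \emph{monomorphism}, so that Serre closure under subobjects applies; ``closed under images'' gives nothing here. The paper obtains injectivity by taking a good lift of $(1_{X^0},f^{n+1})$ via (EA2), so that the mapping cone yields a distinguished $n$-exangle ending in $Y^{n+1}\oplus X^{n}\to X^{n+1}$, whose associated exact sequence of representable functors is exactly what allows one to lift an element killed by $\alpha$ back through $\C(-,d_Y^n)$. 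Without this (EA2)/mapping-cone mechanism your verification that $\mathbb{F}(\mathcal{S})$ is a subbifunctor does not go through. (The pushforward case $a_\ast\delta$ is the easy one: its defect is a quotient of $\widetilde{\delta}$, as in Enomoto's Step 1, and the paper just cites that.)

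Conversely, the step you single out as ``the technical heart'' and leave as an acknowledged obstacle --- closedness of $\mathbb{F}(\mathcal{S})$, i.e.\ closure of $\mathfrak{s}|_{\mathbb{F}(\mathcal{S})}$-deflations under composition --- is not where the difficulty lies, and a proof proposal cannot simply ``expect'' it. Since the contravariant defect depends only on the deflation $d^n$, if $g$ and $f$ are composable $\mathfrak{s}|_{\mathbb{F}(\mathcal{S})}$-deflations, then $fg$ is an $\mathfrak{s}$-deflation by (EA1), and ${\rm Im}\,\C(-,fg)\subseteq{\rm Im}\,\C(-,f)$ gives an exact sequence $\widetilde{\delta_g}\to\widetilde{\delta_{fg}}\to\widetilde{\delta_f}\to0$ in which the image of the first map is the kernel of the second; Serre closure under quotients and extensions then puts $\widetilde{\delta_{fg}}$ in $\mathcal{S}$, with no weak pullback or shifted octahedron needed. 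This is why the paper is content to defer additivity and closedness to ``a similar argument to Enomoto'' and spends its effort on the subbifunctor property instead. So the gaps to repair are: (i) supply the (EA2) good-lift/mapping-cone argument showing $\widetilde{(f^{n+1})^\ast\delta}$ is a subobject of $\widetilde{\delta}$, and (ii) actually carry out (or correctly reduce) the additivity and closedness check rather than flagging it as an open problem.
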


\begin{proof} The proof is a modification of \cite[Theorem B]{Eno}.

{\bf A map ${\rm def}(-)$ from (1) to (2)}.

Let $\mathbb{F}$ be a closed subbifunctor of $\mathbb{E}$. Then $(\mathcal{C}, \mathbb{F}, \mathfrak{s}|_{\mathbb{F}})$ is $n$-exangulated by \cite[Proposition 3.16]{HLN}. So ${\rm def}\mathbb{F}$ is a Serre subcategory of ${\rm mod}\C$ by Proposition \ref{Pro1}. Since every $\mathfrak{s}|_{\mathbb{F}}$-deflation is an $\mathfrak{s}$-deflation, ${\rm def}\mathbb{F}$ is a subcategory of ${\rm def}\mathbb{E}$. Thus ${\rm def}\mathbb{F}$ is a Serre subcategory of an abelian category of ${\rm def}\mathbb{E}$, thus we obtain a map ${\rm def}(-)$ from (1) to (2).

{\bf A map $\mathbb{F}(-)$ from (2) to (1)}.

Let $\mathcal{S}$ be a Serre subcategory of ${\rm def}\mathbb{E}$, and we will construct an additive subbifunctor $\mathbb{F}(\mathcal{S})$ of $\mathbb{E}$. Since ${\rm def}\mathbb{E}$ is a Serre subcategory of ${\rm mod}\C$, it immediately follows that $\mathcal{S}$ is also a Serre subcategory of ${\rm mod}\C$. Let $A$ and $C$ be objects in $\C$ and $\delta\in \mathbb{E}(C, A)$. Then we denote by $\mathbb{F}(\mathcal{S})(C, A)$ a subset of $\mathbb{E}(C, A)$ consisting of $\delta$ such that $\widetilde{\delta}$ belongs to $\mathcal{S}$.

We claim that $\mathbb{F}(\mathcal{S})$ actually defines a closed subbifunctor of $\mathbb{E}$. It suffices to prove that $\mathbb{F}(\mathcal{S})$ is a subbifunctor of $\mathbb{E}$ since we can prove that $\mathbb{F}(\mathcal{S})$ is an additive closed subbifunctor of $\mathbb{E}$ by a similar argument
to \cite[Theorem B]{Eno}.

 Let $\delta\in \mathbb{F}(\mathcal{S})(X^{n+1}, X^0)$ and
$\mathfrak{s}(\delta)=[ \xymatrix{X^0\ar[r]^{d_X^0}&X^1\ar[r]^{d_X^1}&\cdots\ar[r]&X^{n-1}\ar[r]^{d_X^{n-1}}&X^n\ar[r]^{d_X^n}&X^{n+1}} ]$. First take any $f^{n+1}: Y^{n+1}\rightarrow X^{n+1}$, then we show that $(f^{n+1})^*\delta$ belongs to $\mathbb{F}(\mathcal{S})(Y^{n+1}, X^0)$, that is, $\widetilde{(f^{n+1})^*\delta}\in\mathcal{S}$. Let  $Y^{\mr}$ be an $\mathfrak{s}$-conflation which realizes $(f^{n+1})^*\delta\in \mathbb{E}(Y^{n+1}, X^0)$.
Then $(1_{X^0}, f^{n+1}): (f^{n+1})^*\delta\rightarrow \delta$ has a good lift $f^{\mr}: \langle Y^{\mr}, (f^{n+1})^*\delta\rangle\rightarrow \langle X^{\mr}, \delta\rangle$ which makes $\langle M_{f^{\mr}}, (d_Y^0)_*\delta\rangle$ a distinguished $n$-exangle by (EA2), that is, there is a morphism of distinguished $n$-exangles with following commutative diagram
$$\xymatrix{X^0\ar[r]^{d_{Y}^{0}}\ar@{=}[d]&Y^1\ar[r]\ar[d]_{f^{1}}&\cdots \ar[r]&Y^{n-1}\ar[d]_{f^{n-1}}\ar[r]^{d_{Y}^{n-1}}& Y^{n}\ar[d]_{f^{n}}\ar[r]^{d_{Y}^{n}}& Y^{n+1} \ar[d]_{f^{n+1}} \ar@{-->}[r]^{\qquad(f^{n+1})^*\delta}&\\
   X^0\ar[r]^{d_{X}^{0}}&X^1\ar[r]&\cdots \ar[r]&X^{n-1}\ar[r]^{d_X^{n-1}}& X^{n}\ar[r]^{d_{X}^{n}}& X^{n+1} \ar@{-->}[r]^{\delta}&}$$
such that
$$\xymatrix@=3.3em{Y^1\ar[r]^{\tiny\begin{pmatrix}-d_Y^1\\f^1\end{pmatrix}\ \ \ \ }&Y^2\oplus X^1\ar[r]&\cdots\ar[r]&Y^n\oplus X^{n-1}\ar[r]^{\tiny\begin{pmatrix}-d_Y^n&0\\f^n&d_X^{n-1}\end{pmatrix}}&Y^{n+1}\oplus X^n\ar[r]^{\tiny\qquad \begin{pmatrix}f^{n+1}&d_X^n\end{pmatrix}}&X^{n+1}\ar@{-->}[r]^{\qquad (d_Y^0)_*\delta}&}$$
 is a distinguished $n$-exangle. By the Yoneda lemma, we obtain the following commutative diagram in ${\rm mod}\C$
$$\xymatrix@=3em{\C(-, Y^{n-1})\ar[d]^{\C(-, f^{n-1})}\ar[r]^{\C(-, d_Y^{n-1})}&\C(-, Y^n)\ar[d]^{\C(-, f^n)}\ar[r]^{\C(-, d_Y^n)}&\C(-, Y^{n+1})\ar[d]^{\C(-, f^{n+1})}\ar[r]^{\qquad p}&\widetilde{(f^{n+1})^*\delta}\ar[r]\ar[d]^{\alpha}&0\\
\C(-, X^{n-1})\ar[r]^{\C(-, d_X^{n-1})}&\C(-, X^n)\ar[r]^{\C(-, d_X^n)}&\C(-, X^{n+1})\ar[r]^{\qquad  \pi}&\widetilde{\delta}\ar[r]&0}$$
with exact rows. Now we claim that $\alpha: \widetilde{(f^{n+1})^*\delta}\rightarrow \widetilde{\delta}$ is an injection. For any $M\in\C$, if $m\in \widetilde{(f^{n+1})^*\delta}(M)$ satisfies $\alpha_M(m)=0$, there exists $a\in\C(M, Y^{n+1})$ such that $m=p_M(a)$ since $p_M$ is epic. Hence  $\pi_M\C(M, f^{n+1})(a)=\alpha_M p_M(a)=0$. There is $b\in\C(M, X^n)$ such that $\C(M, d_X^n)(b)=\C(M, f^{n+1})(a)$, that is, $d_X^nb=f^{n+1}a$ which implies ${\footnotesize\begin{pmatrix}f^{n+1}& d_X^n\end{pmatrix}\begin{pmatrix}-a\\b\end{pmatrix}}=d_X^nb-f^{n+1}a=0$. Hence there is a morphism ${\footnotesize\begin{pmatrix}a'\\b'\end{pmatrix}}: M\rightarrow Y^n\oplus X^{n-1}$ which makes the following diagram commutative
$$\xymatrix@=4em{&M\ar@{-->}[dl]_{\tiny\begin{pmatrix}a'\\b'\end{pmatrix}}\ar[d]^{\tiny\begin{pmatrix}-a\\b\end{pmatrix}}&\\
Y^n\oplus X^{n-1}\ar[r]^{\tiny\begin{pmatrix}-d_Y^n&0\\f^n&d_X^{n-1}\end{pmatrix}}&Y^{n+1}\oplus X^n\ar[r]^{\tiny\qquad \begin{pmatrix}f^{n+1}&d_X^n\end{pmatrix}}&X^{n+1}.}$$
Hence ${\footnotesize\begin{pmatrix}-a\\b\end{pmatrix}=\begin{pmatrix}-d_Y^n&0\\f^n&d_X^{n-1}\end{pmatrix}\begin{pmatrix}a'\\b'\end{pmatrix}
=\begin{pmatrix}-d_Y^na'\\f^na'+d_X^{n-1}b'\end{pmatrix}}$, and $a=d_Y^na'=\C(M, d_Y^n)(a')$ which implies $m=p_M(a)=p_M\C(M, d_Y^n)(a')=0$. Therefore $\alpha: \widetilde{(f^{n+1})^*\delta}\rightarrow \widetilde{\delta}$ is an injection. Since $\delta\in\mathbb{F}(\mathcal{S})(X^{n+1}, X^0)$, we have $\widetilde{\delta}\in \mathcal{S}$. Note that $\mathcal{S}$ is a Serre subcategory of ${\rm mod}\C$, we have $\widetilde{(f^{n+1})^*\delta}\in\mathcal{S}$, this means $(f^{n+1})^*\delta$ belongs to $\mathbb{F}(\mathcal{S})(Y^{n+1}, X^0)$.

Let $\delta\in \mathbb{F}(\mathcal{S})(X^{n+1}, X^0)$ and  $f^0: X^0\rightarrow Y^0$. By a similar argument of the last part of (Step 1) in \cite[Theorem B]{Eno}, we can prove that $(f^0)_*\delta\in \mathbb{F}(\mathcal{S})(X^{n+1}, Y^0)$. Hence $\mathbb{F}(\mathcal{S})$ is a subbifunctor of $\mathbb{E}$.

Finally, one can easily check that ${\rm def}\mathbb{F}(\mathcal{S})=\mathcal{S}$ and $\mathbb{F}({\rm def}\mathbb{F})=\mathbb{F}$, and their proofs are similar to that of  \cite[Theorem B]{Eno}.
\end{proof}

Now we apply Theorem \ref{main47} to study $n$-proper classes of distinguished $n$-exangles in an $n$-exangulated category $(\C, \E, \mathfrak{s})$. The notion of $n$-proper class was introduced in \cite{HZZ}, we omit some details here, but the reader can find them in \cite{HZZ}.

Recall from \cite{HZZ} that a morphism $f^{\mr}\colon\Xd\to\langle Y^{\mr},\rho\rangle$ of distinguished $n$-exangles is called a \emph{weak isomorphism} if
{$f^{0}$ and $f^{n+1}$} are isomorphisms. { It should be noted that the notion of weak isomorphisms defined here is different from the one defined by Geiss, Keller and Oppermann in $(n+2)$-angulated categories or by Jasso  in $n$-exact categories (see \cite{GKO} and \cite{J})}.

The most important property of $n$-proper classes is following.

\begin{lemma}\label{thma} {\rm\cite[Theorem 4.5]{HZZ}} Let $\xi$ be a class of distinguished $n$-exangles in $n$-exangulated category $(\C,\E,\s)$ which is closed under weak isomorphism.
Set $\mathbb{E}_\xi:=\mathbb{E}|_\xi$, that is, $$\mathbb{E}_\xi(C, A)=\{\delta\in\mathbb{E}(C, A)~|~\delta~ \textrm{is realized as a distinguished n-exangle} \ {_{A}\Xd_{C}} \ \textrm{in} \ {\xi}\}$$ for any $A, C\in{\C}$, and $\mathfrak{s}_\xi:=\mathfrak{s}|_{\mathbb{E}_\xi}$. Then $\xi$ is an  $n$-proper class if and only if $(\C, \mathbb{E}_\xi, \mathfrak{s}_\xi)$ is an $n$-exangulated category.
\end{lemma}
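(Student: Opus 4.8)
The plan is to deduce the biconditional from Proposition \ref{Pro}, which already settles the corresponding statement at the level of additive subbifunctors: among additive subbifunctors $\mathbb{F}\subseteq\mathbb{E}$, the triplet $(\C,\mathbb{F},\s|_{\mathbb{F}})$ is $n$-exangulated exactly when the $\s|_{\mathbb{F}}$-inflations are closed under composition. Thus the work splits into two parts: (i) observe that, under the standing hypothesis that $\xi$ is closed under weak isomorphisms, the class $\xi$ is completely recorded by the subset $\mathbb{E}_\xi\subseteq\mathbb{E}$; and (ii) match the defining axioms of an $n$-proper class with the two conditions ``$\mathbb{E}_\xi$ is an additive subbifunctor'' and ``the $\xi$-inflations are closed under composition''. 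For (i) I would use that any two distinguished $n$-exangles realizing the same extension $\delta$ are connected by a morphism which is the identity on the outer terms $X^0$ and $X^{n+1}$, hence by a weak isomorphism; so closure of $\xi$ under weak isomorphisms forces $\xi=\{\langle X^{\mr},\delta\rangle\textrm{ distinguished}\mid\delta\in\mathbb{E}_\xi\}$, and in particular the $\xi$-inflations (resp. $\xi$-deflations) are precisely the $\s_\xi$-inflations (resp. $\s_\xi$-deflations) as soon as $\s_\xi=\s|_{\mathbb{E}_\xi}$ is legitimate.

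For the ``if'' direction I would argue as follows. If $(\C,\mathbb{E}_\xi,\s_\xi)$ is $n$-exangulated, then by definition $\mathbb{E}_\xi$ is an additive subbifunctor, so each $\mathbb{E}_\xi(C,A)$ is a subgroup of $\mathbb{E}(C,A)$ stable under $a_{\ast}$ and $c^{\ast}$; this translates into closure of $\xi$ under finite direct sums and under base and cobase change. Exactness of the realization $\s_\xi$ yields, via (R2), that the split $n$-exangles lie in $\xi$, and (EA1) says exactly that $\s_\xi$-inflations, i.e. $\xi$-inflations, are closed under composition, and dually for $\xi$-deflations. Together with the hypothesis that $\xi$ is closed under weak isomorphisms, these are exactly the defining properties of an $n$-proper class.

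For the ``only if'' direction, assuming $\xi$ is an $n$-proper class, the first and principal step is to check that $\mathbb{E}_\xi$ is an additive subbifunctor of $\mathbb{E}$. Closure of $\xi$ under base and cobase change gives $c^{\ast}\mathbb{E}_\xi\subseteq\mathbb{E}_\xi$ and $a_{\ast}\mathbb{E}_\xi\subseteq\mathbb{E}_\xi$; the split extension lies in $\mathbb{E}_\xi$; and for stability under the Baer sum I would use that $\delta+\delta'=\Delta_C^{\ast}(\nabla_A)_{\ast}(\delta\oplus\delta')$, so $\delta+\delta'\in\mathbb{E}_\xi$ follows from closure under finite direct sums combined with one base change and one cobase change. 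Once $\mathbb{E}_\xi$ is known to be an additive subbifunctor, $\s_\xi:=\s|_{\mathbb{E}_\xi}$ makes sense, and the composition-closure of $\xi$-inflations is condition (2) of Proposition \ref{Pro}; hence $(\C,\mathbb{E}_\xi,\s_\xi)$ is $n$-exangulated.

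The main obstacle I anticipate is not a single deep idea but the bookkeeping, already contained in Proposition \ref{Pro}, that the restricted realization $\s_\xi$ still satisfies (R0)--(R2), (EA2) and (EA2$\op$). The observation that makes this routine is that for a distinguished $n$-exangle $\langle X^{\mr},\delta\rangle$ with $\delta\in\mathbb{E}_\xi$ the natural transformations $\del\ssh$ and $\del\ush$ automatically take values in $\mathbb{E}_\xi(-,X^0)$ and $\mathbb{E}_\xi(X^{n+1},-)$, since $f^{\ast}\delta$ and $g_{\ast}\delta$ are base and cobase changes of $\delta$; consequently the exactness conditions defining an $n$-exangle and the good-lift axiom transfer verbatim from $(\C,\mathbb{E},\s)$ to $(\C,\mathbb{E}_\xi,\s_\xi)$. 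This is precisely why routing the argument through Proposition \ref{Pro}, rather than re-verifying the $n$-exangulated axioms by hand, is the most economical option.
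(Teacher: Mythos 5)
The paper itself offers no proof of this lemma: it is quoted from \cite[Theorem 4.5]{HZZ} (the authors explicitly omit the details of $n$-proper classes), so there is no in-paper argument to compare yours against line by line. Judged on its own, your argument is correct and is the natural reduction. The key point you isolate is right: since $\s(\delta)$ is a single homotopy equivalence class in $\mathbf{C}^{n+2}_{(A,C)}$, any two distinguished $n$-exangles realizing the same $\delta$ are linked by a morphism with $f^0=1$, $f^{n+1}=1$, hence by a weak isomorphism, so closure of $\xi$ under weak isomorphisms forces $\xi$ to be exactly the class of distinguished $n$-exangles whose extension lies in $\mathbb{E}_\xi$; in particular $\xi$-inflations and $\mathfrak{s}_\xi$-inflations (resp.\ deflations) coincide. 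The Baer-sum identity $\delta+\delta'=\Delta_C^{\ast}(\nabla_A)_{\ast}(\delta\oplus\delta')$, together with closure under finite direct sums and (co)base change, does show each $\mathbb{E}_\xi(C,A)$ is a subgroup stable under $a_{\ast}$ and $c^{\ast}$ (negatives come for free as $(-1_A)_{\ast}\delta$), so $\mathbb{E}_\xi$ is an additive subbifunctor, and Proposition \ref{Pro} — whose setup in \cite{HLN} already guarantees that $\s|_{\mathbb{F}}$ is an exact realization for any additive subbifunctor $\mathbb{F}$ — then converts composition-closure of inflations into the $n$-exangulated axioms; the converse direction is the same dictionary read backwards, with (R2) and (EA1) supplying the split $n$-exangles and composition-closure. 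The only caveat is that the paper never records the definition of an $n$-proper class, so your axiom-matching in both directions silently relies on the definition in \cite{HZZ} (closure under weak isomorphisms, finite direct sums, base and cobase change, containment of split distinguished $n$-exangles, and composition-closure of $\xi$-inflations and $\xi$-deflations); with that definition your translation is accurate, and routing everything through Proposition \ref{Pro} is exactly the economical path one would expect the cited proof to take.
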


Now we have the following result on closed subbifunctors and $n$-proper subclasses.
\begin{lemma}\label{lemma:4.8} Let $(\C, \E, \mathfrak{s})$ be a skeletally small $n$-exangulated category. Then the map $\Y\mapsto \E_{\Y}$ gives an isomorphism between two posets, where poset structures are given by inclusion.

{\rm (1)} The poset of $n$-proper subclasses which is contained in $\xi_{\mathfrak{s}}$, where $\xi_{\mathfrak{s}}$ is the class of $\mathfrak{s}$-distinguished $n$-exangles in $(\C, \E, \mathfrak{s})$.

{\rm (2)} The poset of closed subbifunctors of $\E$.
\end{lemma}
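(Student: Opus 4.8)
The plan is to exhibit an explicit inverse of the assignment $\Y\mapsto\E_{\Y}$ and reduce everything to the two ``restriction'' criteria already available: Lemma \ref{thma}, which says an isomorphism-closed class $\xi$ of distinguished $n$-exangles is $n$-proper if and only if $(\C,\E_{\xi},\mathfrak{s}_{\xi})$ is $n$-exangulated, and Proposition \ref{Pro}, which says an additive subbifunctor $\mathbb{F}\subseteq\E$ is closed if and only if $(\C,\mathbb{F},\mathfrak{s}|_{\mathbb{F}})$ is $n$-exangulated. The candidate inverse sends a closed subbifunctor $\mathbb{F}\subseteq\E$ to the class $\Y(\mathbb{F})$ of all distinguished $n$-exangles $\langle X^{\mr},\delta\rangle$ with $\delta\in\mathbb{F}(X^{n+1},X^{0})$. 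Unwinding definitions, $\Y(\mathbb{F})$ is precisely the class of $\mathfrak{s}|_{\mathbb{F}}$-distinguished $n$-exangles of the $n$-exangulated category $(\C,\mathbb{F},\mathfrak{s}|_{\mathbb{F}})$ furnished by Proposition \ref{Pro}, so in particular $\Y(\mathbb{F})\subseteq\xi_{\mathfrak{s}}$, $\E_{\Y(\mathbb{F})}=\mathbb{F}$ and $\mathfrak{s}_{\Y(\mathbb{F})}=\mathfrak{s}|_{\mathbb{F}}$ (indeed $\delta\in\mathbb{F}(C,A)$ forces every realization of $\delta$ into $\Y(\mathbb{F})$, and a $\delta$ realized by a member of $\Y(\mathbb{F})$ lies in $\mathbb{F}$ by construction).

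First I would check that both maps land in the stated posets. For $\Y\mapsto\E_{\Y}$: an $n$-proper class $\Y\subseteq\xi_{\mathfrak{s}}$ is, by its defining axioms in \cite{HZZ}, closed under weak isomorphism and finite direct sums and contains the split $n$-exangles, whence $\E_{\Y}(C,A)$ is a subgroup of $\E(C,A)$ stable under $a_{\ast}$ and $c^{\ast}$, so $\E_{\Y}$ is an additive subbifunctor of $\E$; since $(\C,\E_{\Y},\mathfrak{s}_{\Y})$ is $n$-exangulated by Lemma \ref{thma}, Proposition \ref{Pro} gives that $\E_{\Y}\subseteq\E$ is closed. For $\mathbb{F}\mapsto\Y(\mathbb{F})$: it remains only to see $\Y(\mathbb{F})$ is $n$-proper; it is closed under weak isomorphism, because for a weak isomorphism $f^{\mr}\colon\langle X^{\mr},\delta\rangle\to\langle Y^{\mr},\rho\rangle$ with $\delta\in\mathbb{F}$ the relation $(f^{0})_{\ast}\delta=(f^{n+1})^{\ast}\rho$ together with the invertibility of $f^{0},f^{n+1}$ and \cite[Lemma 3.14]{HLN} forces $\rho\in\mathbb{F}$; and since $(\C,\E_{\Y(\mathbb{F})},\mathfrak{s}_{\Y(\mathbb{F})})=(\C,\mathbb{F},\mathfrak{s}|_{\mathbb{F}})$ is $n$-exangulated, Lemma \ref{thma} makes $\Y(\mathbb{F})$ an $n$-proper class.

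Next I would verify the two assignments are mutually inverse. The identity $\E_{\Y(\mathbb{F})}=\mathbb{F}$ was recorded above. For $\Y(\E_{\Y})=\Y$, the inclusion $\Y\subseteq\Y(\E_{\Y})$ is immediate from the definitions, while for the reverse inclusion suppose $\langle X^{\mr},\delta\rangle$ is a distinguished $n$-exangle with $\delta\in\E_{\Y}(X^{n+1},X^{0})$; then $\delta$ is also realized by some $\langle W^{\mr},\delta\rangle\in\Y$, so from $\mathfrak{s}(\delta)=[X^{\mr}]=[W^{\mr}]$ there is a homotopy equivalence $X^{\mr}\to W^{\mr}$ which is the identity on the $0$-th and $(n+1)$-st terms, hence a weak isomorphism of $n$-exangles, and closure of $\Y$ under weak isomorphism yields $\langle X^{\mr},\delta\rangle\in\Y$. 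Since both assignments obviously respect inclusions, they give the desired isomorphism of posets.

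The step I expect to be the main obstacle is the bookkeeping in the well-definedness paragraph, namely making precise that the closure axioms defining an $n$-proper class in \cite{HZZ} really do force $\E_{\Y}$ to be a subgroup-valued subbifunctor stable under $a_{\ast}$ and $c^{\ast}$ and force $\Y$ to be closed under weak isomorphism, and dually that the class $\Y(\mathbb{F})$ built from a closed $\mathbb{F}$ is stable under weak isomorphism. Once these compatibilities are in place, the statement is a formal consequence of Lemma \ref{thma} and Proposition \ref{Pro}.
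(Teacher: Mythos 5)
Your proposal is correct and follows essentially the same route as the paper: the inverse is the class of $\mathfrak{s}|_{\mathbb{F}}$-distinguished $n$-exangles, well-definedness in both directions is reduced to Lemma \ref{thma} together with Proposition \ref{Pro}, and closure under weak isomorphism is checked via $(f^0)_{\ast}\delta=(f^{n+1})^{\ast}\rho$ with $f^0,f^{n+1}$ invertible and \cite[Lemma 3.14]{HLN}. Your write-up is in fact slightly more explicit than the paper's on the mutual-inverse verification (the paper dismisses $\mathbb{F}=\E_{\xi_{\mathfrak{s}|_{\mathbb{F}}}}$ and the embedding as easy), but the underlying argument is identical.
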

\begin{proof}Let $\xi_{\mathfrak{s}}$  be a class of $\mathfrak{s}$-distinguished $n$-exangles in $(\C, \E, \mathfrak{s})$. It is easy to see that $\xi_{\mathfrak{s}}$ is an $n$-proper class on $\C$. Assume that $\Y$ is an $n$-proper class on $\C$ with $\Y\subseteq \xi$, then $\E_{\Y}\subseteq \E$ and $(\C, \E_{\Y}, \mathfrak{s}_{\Y})$ is an $n$-exangulated category by Lemma \ref{thma}. Hence $\E_{\Y}$ is a closed subbifunctor of $\E$, thus we obtain a map $\Y\mapsto \E_{\Y}$ from (1) to (2). It is easy to show that this map is order-preserving, and includes an embedding of posets (1) into (2).

Next we claim that every closed subbifunctor $\mathbb{F}$ of $\E$ is of the form $\E_{\Y}$ for some $n$-proper class $\Y$ satisfying $\Y\subseteq \xi_{\mathfrak{s}}$.
Let $\mathbb{F}$ be a closed subbifunctor of $\E$. Then $(\C, \mathbb{F}, \mathfrak{s}|_{\mathbb{F}})$ is an $n$-exangulated category by Proposition \ref{Pro}. Suppose that $\xi_{\mathfrak{s}|_{\mathbb{F}}}$ is the class of $\mathfrak{s}|_{\mathbb{F}}$-distinguished $n$-exangles, then $\xi_{\mathfrak{s}|_{\mathbb{F}}}$
is closed under weak isomorphisms. Indeed, if $f^{\mr}\colon\Xd\to\langle Y^{\mr},\rho\rangle$ is a weak isomorphism of distinguished $n$-exangles with $\langle Y^{\mr},\rho\rangle\in \xi_{\mathfrak{s}|_{\mathbb{F}}}$, then $\rho\in \mathbb{F}(Y^{n+1}, Y^0)$. Since $f^{\mr}$ is a weak isomorphism,  $f^0$ and $f^{n+1}$ are isomorphisms and $(f^0)_*\delta=(f^{n+1})^*\rho$. Hence $(f^0)_*\delta\in \mathbb{F}(X^{n+1}, Y^0)$, and $\delta\in \mathbb{F}(X^{n+1}, X^0)$ as $f^{0}$ is an isomorphism. Therefore, $\xi_{\mathfrak{s}|_{\mathbb{F}}}$
is closed under weak isomorphisms. It follows from Lemma \ref{thma} that $\xi_{\mathfrak{s}|_{\mathbb{F}}}$ is an $n$-proper class on $\C$ with $\xi_{\mathfrak{s}|_{\mathbb{F}}}\subseteq \xi_{\mathfrak{s}}$. Hence $(\C, \E_{\xi_{\mathfrak{s}|_{\mathbb{F}}}}, \mathfrak{s}_{\xi_{\mathfrak{s}|_{\mathbb{F}}}})$ is an $n$-exangulated category by Lemma \ref{thma} again. It is easy to see that $\mathbb{F}=\E_{\xi_{\mathfrak{s}|_{\mathbb{F}}}}$, as desired.
\end{proof}

As an immediate consequence of Theorem \ref{main47} and Lemma \ref{lemma:4.8}, we have the following classification of $n$-proper classes of a given skeletally small $n$-extriangulated category.

\begin{corollary}\label{corollary:4.9} Let $(\C, \E, \mathfrak{s})$ be a skeletally small $n$-exangulated category. Then there exists an isomorphism between two posets, where poset structures are given by inclusion.

{\rm (1)} The poset of $n$-proper subclasses which is contained in $\xi_{\mathfrak{s}}$, where $\xi_{\mathfrak{s}}$ is the class of $\mathfrak{s}$-distinguished $n$-exangles in $(\C, \E, \mathfrak{s})$.

{\rm (2)} The poset of  Serre subcategories of ${\rm def}\mathbb{E}$.
\end{corollary}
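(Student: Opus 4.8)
The plan is to obtain this corollary purely by composing the two poset isomorphisms already in hand, using that a composite of order isomorphisms is again an order isomorphism. First I would recall that, by the standing assumption of this section, $\C$ is skeletally small and has weak-kernels, so Theorem \ref{main47} applies and tells us that $\mathbb{F}\mapsto{\rm def}\,\mathbb{F}$ is an isomorphism from the poset of closed subbifunctors of $\E$ onto the poset of Serre subcategories of ${\rm def}\,\E$. On the other side, Lemma \ref{lemma:4.8} gives that $\Y\mapsto\E_{\Y}$ is an isomorphism from the poset of $n$-proper subclasses contained in $\xi_{\mathfrak{s}}$ onto the poset of closed subbifunctors of $\E$. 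The intermediate poset, closed subbifunctors of $\E$, is literally the target of the second map and the source of the first, so no reconciliation is needed: the composite $\Y\mapsto{\rm def}\,\E_{\Y}$ is the desired bijection between (1) and (2).

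Next I would check that this composite is order-preserving with order-preserving inverse. Inclusion $\Y_1\subseteq\Y_2$ of $n$-proper subclasses is equivalent to the inclusion $\E_{\Y_1}\subseteq\E_{\Y_2}$ of subbifunctors of $\E$, immediately from the definition of $\E_{\Y}$ in Lemma \ref{thma}, and inclusion of closed subbifunctors is equivalent to inclusion of the corresponding defect subcategories by Theorem \ref{main47}; chaining the two equivalences gives $\Y_1\subseteq\Y_2\iff{\rm def}\,\E_{\Y_1}\subseteq{\rm def}\,\E_{\Y_2}$. The inverse is then described explicitly by $\mathcal{S}\mapsto\xi_{\mathfrak{s}|_{\mathbb{F}(\mathcal{S})}}$: first form the closed subbifunctor $\mathbb{F}(\mathcal{S})$ attached to a Serre subcategory $\mathcal{S}$ of ${\rm def}\,\E$ as in the proof of Theorem \ref{main47}, then pass to the class of $\mathfrak{s}|_{\mathbb{F}(\mathcal{S})}$-distinguished $n$-exangles as in Lemma \ref{lemma:4.8}. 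That these are mutually inverse follows at once from the identities ${\rm def}\,\mathbb{F}(\mathcal{S})=\mathcal{S}$ and $\mathbb{F}({\rm def}\,\mathbb{F})=\mathbb{F}$ recorded in the proof of Theorem \ref{main47} together with $\E_{\xi_{\mathfrak{s}|_{\mathbb{F}}}}=\mathbb{F}$ from the proof of Lemma \ref{lemma:4.8}.

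I do not expect any genuine obstacle here: all of the real work has already been carried out, in Theorem \ref{main47} (which rests on Proposition \ref{Pro1}, i.e. the fact that ${\rm eff}\,\E={\rm def}\,\E$ is a Serre subcategory of ${\rm mod}\,\C$) and in Lemma \ref{lemma:4.8} (which rests on Lemma \ref{thma} characterising $n$-proper classes as precisely those classes of distinguished $n$-exangles that are closed under weak isomorphisms and yield an $n$-exangulated substructure). The only point that deserves a sentence of care is that the weak-kernel hypothesis needed for Theorem \ref{main47} is indeed in force, which it is by the blanket assumption opening Section \ref{section3}; without it ${\rm mod}\,\C$ need not be abelian and the statement about Serre subcategories would not even make sense.
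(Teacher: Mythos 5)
Your proposal is correct and matches the paper's argument: the paper obtains Corollary \ref{corollary:4.9} exactly by composing the poset isomorphism $\Y\mapsto\E_{\Y}$ of Lemma \ref{lemma:4.8} with the isomorphism $\mathbb{F}\mapsto{\rm def}\,\mathbb{F}$ of Theorem \ref{main47}, which is what you do. Your extra remark that the weak-kernel hypothesis is supplied by the blanket assumption of Section \ref{section3} is a sensible point of care but introduces nothing beyond the paper's route.
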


Next we apply Theorem \ref{main47} to study $n$-exact substructures. First we recall basics on  an $n$-exact category and $n$-exangulated category.
An $n$-exact category consists of a pair $(\C, \X)$, where $\C$ is an additive category and $\X$ is a class of $n$-exact sequences in $\C$ satisfies some axioms, see \cite[Definition 4.2]{J}. An $n$-exact category $(\C, \X)$ can be regarded as $n$-exangulated category as follows: For $A, C\in\C$, let $\mathbb{E}_{\X}(C, A)$ be the subclass of  $\Lambda^{n+2}_{(A,C)}$ consisting of all $[X^{\mr}]$ such that $[X^{\mr}]\in\X$, where $\Lambda^{n+2}_{(A,C)}$  denote the class of all homotopy equivalence class of $n$-exact sequences in $\mathbf{C}^{n+2}_{(A,C)}$. Then $\mathbb{E}_{\X}(C, A)$ is an abelian group, hence there is a biadditive functor $\mathbb{E}_{\X}: \C^{\rm op}\times \C\rightarrow \Ab$ by \cite[Proposition 4.32]{HLN}. Define the realization $\mathfrak{s}_{\X}(\delta)$ of $\delta\in\mathbb{E}_{\X}(C, A)$ to be itself, then $(\C, \mathbb{E}_{\X}, \mathfrak{s}_{\X})$ is an $n$-exangulated category, see \cite[Proposition 4.36]{HLN}.

Now we have the following result on closed subbifunctors and $n$-exact structures.
\begin{lemma}\label{lemma:4.10} Let $(\C, \X)$ be a skeletally small $n$-exact category. Then the map $\Y\mapsto \E_{\Y}$ gives an isomorphism of posets between two posets, where poset structures are given by inclusion.

{\rm (1)} The poset of $n$-exact structures which is contained in $\X$.

{\rm (2)} The poset of closed subbifunctors of $\E_{\X}$.
\end{lemma}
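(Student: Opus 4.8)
The plan is to imitate the proof of Lemma~\ref{lemma:4.8}, replacing ``$n$-proper subclass'' by ``$n$-exact substructure'' throughout. The feature that makes the argument go through is that the realization $\mathfrak{s}_{\X}$ of an $n$-exact category sends each extension to itself; hence for a subbifunctor $\mathbb{F}\subseteq\E_{\X}$ the restricted realization $\mathfrak{s}_{\X}|_{\mathbb{F}}$ is again the identity, and any $n$-exangulated structure refining $(\C,\E_{\X},\mathfrak{s}_{\X})$ is automatically of the form $(\C,\E_{\Y},\mathfrak{s}_{\Y})$ for a subclass $\Y\subseteq\X$. So the content of the statement is really a compatibility between Jasso's axioms for an $n$-exact structure and the notion of a closed subbifunctor.

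\textbf{From (1) to (2).} First I would check that $\Y\mapsto\E_{\Y}$ lands in the closed subbifunctors. If $\Y\subseteq\X$ is an $n$-exact structure, then $\E_{\Y}$ is an additive subbifunctor of $\E_{\X}$ and, by \cite[Propositions~4.32 and 4.36]{HLN} applied to $(\C,\Y)$, the triplet $(\C,\E_{\Y},\mathfrak{s}_{\Y})$ with $\mathfrak{s}_{\Y}=\mathfrak{s}_{\X}|_{\E_{\Y}}$ is $n$-exangulated. Proposition~\ref{Pro} then gives that $\E_{\Y}\subseteq\E_{\X}$ is closed. The assignment is clearly order-preserving, and it is injective because $\Y$ is recovered from $\E_{\Y}$ as its class of distinguished $n$-exangles (the realization being the identity).

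\textbf{From (2) to (1).} Conversely, given a closed subbifunctor $\mathbb{F}\subseteq\E_{\X}$, Proposition~\ref{Pro} gives that $(\C,\mathbb{F},\mathfrak{s}_{\X}|_{\mathbb{F}})$ is $n$-exangulated; since its realization is still the identity, its distinguished $n$-exangles form a subclass $\Y(\mathbb{F})$ of $\X$ with $\E_{\Y(\mathbb{F})}=\mathbb{F}$. The heart of the matter is to verify that $\Y(\mathbb{F})$ satisfies all the axioms of an $n$-exact structure in the sense of \cite[Definition~4.2]{J}. Here I expect the following split: the exactness of the underlying $(n+2)$-term complexes is inherited verbatim from $\X$, since every member of $\Y(\mathbb{F})$ is already a member of $\X$; closure under isomorphisms and the presence of the split sequences follow from $\mathbb{F}$ being an additive subbifunctor (in particular a subgroup closed under the functorial operations); the stability of admissible monomorphisms and admissible epimorphisms under composition amounts exactly to $\mathfrak{s}_{\X}|_{\mathbb{F}}$-inflations and $\mathfrak{s}_{\X}|_{\mathbb{F}}$-deflations being closed under composition, which is part of Proposition~\ref{Pro}; and the existence and admissibility of pushouts of admissible monics along arbitrary morphisms (and dually of pullbacks of admissible epics) follow from $\mathbb{F}$ being closed under the operations $a_{*}$ and $c^{*}$. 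Thus $\mathbb{F}\mapsto\Y(\mathbb{F})$ is a well-defined order-preserving map from (2) to (1).

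\textbf{Inverseness and the main obstacle.} Finally I would observe that the two maps are mutually inverse: from $\Y$ one builds $\E_{\Y}$ and then recovers $\Y$ as its distinguished $n$-exangles, while from $\mathbb{F}$ one builds $\Y(\mathbb{F})$ with $\E_{\Y(\mathbb{F})}=\mathbb{F}$; together with order-preservation this yields that $\Y\mapsto\E_{\Y}$ is an isomorphism of posets. One should also record the compatibility $\mathfrak{s}_{\Y}=\mathfrak{s}_{\X}|_{\E_{\Y}}$, without which the two directions would not obviously compose to the identity. The step I expect to be the main obstacle is precisely the verification in the previous paragraph that a closed subbifunctor of $\E_{\X}$ produces a genuine $n$-exact structure rather than merely an $n$-exangulated refinement; this is where Proposition~\ref{Pro} does the essential work, translating Jasso's composition axioms into the closedness of $\mathbb{F}$, while the remaining axioms are either automatic from $\mathbb{F}$ being a subbifunctor or imported directly from $(\C,\X)$.
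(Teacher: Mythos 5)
Your map from (1) to (2) coincides with the paper's: realize $\Y\subseteq\X$ as an $n$-exangulated category $(\C,\E_{\Y},\mathfrak{s}_{\Y})$ via \cite[Propositions 4.32 and 4.36]{HLN} and deduce closedness of $\E_{\Y}\subseteq\E_{\X}$ from Proposition \ref{Pro}; injectivity and order-preservation are handled the same way. The genuine difference is in the direction (2) to (1). After obtaining the $n$-exangulated category $(\C,\mathbb{F},\mathfrak{s}|_{\mathbb{F}})$ from Proposition \ref{Pro}, the paper does not check Jasso's axioms by hand: it notes that every $\mathfrak{s}|_{\mathbb{F}}$-inflation (resp.\ deflation) is an $\mathfrak{s}_{\X}$-inflation (resp.\ deflation), hence monic (resp.\ epic), and then quotes \cite[Propositions 4.37 and 4.23]{HLN} to conclude that the class $\Y$ of $\mathfrak{s}|_{\mathbb{F}}$-conflations is an $n$-exact structure with $\mathbb{F}=\E_{\Y}$. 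Your plan instead re-derives this step directly from \cite[Definition 4.2]{J}, which in effect means reproving \cite[Proposition 4.37]{HLN}; that is a legitimate alternative but buys nothing over the citation.

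Within that re-derivation, one justification is too weak as written and is the only real gap: you attribute the existence and admissibility of $n$-pushouts along arbitrary morphisms (axioms (E2) and (E2$^{\mathrm{op}}$)) to ``$\mathbb{F}$ being closed under $a_{*}$ and $c^{*}$.'' Every additive subbifunctor of $\E_{\X}$ is closed under $a_{*}$ and $c^{*}$ by definition, yet arbitrary additive subbifunctors do not yield $n$-exact substructures, so this property alone cannot prove (E2). What actually produces the $n$-pushout diagram, and the admissibility of the induced monomorphism, is the axiom (EA2$^{\mathrm{op}}$) of the $n$-exangulated category $(\C,\mathbb{F},\mathfrak{s}|_{\mathbb{F}})$ (good lifts whose mapping cocones are distinguished, giving the required exactness of the cone), together with the fact that its inflations are monic and its deflations are epic because they are already admissible in $\X$. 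Since you have already invoked Proposition \ref{Pro}, this machinery is at your disposal, so the defect is reparable: either carry out the good-lift/mapping-cone argument for (E2) and (E2$^{\mathrm{op}}$), or simply cite \cite[Propositions 4.37 and 4.23]{HLN} as the paper does. The remaining points of your outline (split sequences, closure under isomorphisms, composition of admissible monomorphisms and epimorphisms via Proposition \ref{Pro}, and mutual inverseness using that the realization is the identity) agree with the paper's argument.
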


\begin{proof} Let $(\C, \E_{\X}, \mathfrak{s}_{\X})$ be  an $n$-exangulated category coming from an $n$-exact category $(\C, \X)$ as above. Then every $\mathfrak{s}|_{\X}$-inflation (resp. $\mathfrak{s}|_{\X}$-deflation) is a monomorphism (resp. an epimorphism). It follows from \cite[Proposition 4.23]{HLN} that $(\C, \X)$ satisfies condition (EI), see \cite[Definition 4.21]{HLN}. Assume that $\Y$ is an $n$-exact structure on $\C$ with $\Y\subseteq \X$, then we have an $n$-exangulated category $(\C, \E_{\Y}, \mathfrak{s}_{\Y})$ corresponding to $n$-exact structure $\Y$.  Hence $\E_{\Y}$ is a closed subbifunctor of $\E_{\X}$ by Proposition \ref{Pro}, thus we obtain a map $\Y\mapsto \E_{\Y}$ from (1) to (2). It is easy to show that this map is order-preserving, and includes an embedding of posets (1) into (2).

Next we claim that every closed subbifunctor $\mathbb{F}$ of $\E_{\X}$ is of the form $\E_{\Y}$ for some $n$-exact structure $\Y$ on $\C$ satisfying $\Y\subseteq \X$.
 Assume that $\mathbb{F}$ is a closed subbifunctor of $\E_{\X}$, then $(\C, \mathbb{F}, \mathfrak{s}|_{\mathbb{F}})$ is an $n$-exangulated category by Proposition \ref{Pro}.  Let $\Y$ denote the class of all $\mathfrak{s}|_{\mathbb{F}}$-conflation. It is easy to see that $(\C, \Y)$ satisfies condition (EI) by \cite[Definition 4.21]{HLN}. Since every $\mathfrak{s}|_{\mathbb{F}}$-inflation (resp. $\mathfrak{s}|_{\mathbb{F}}$-deflation) is an $\mathfrak{s}|_{\X}$-inflation (resp. $\mathfrak{s}|_{\X}$-deflation), it is a monomorphism (resp. an epimorphism).  Therefore, $(\C, \Y)$ is an $n$-exact category by  \cite[Propositions 4.37 and 4.23]{HLN}. Let $(\C, \E_{\Y}, \mathfrak{s}_{\Y})$ be an $n$-exangulated category corresponding to $n$-exact structure $\Y$. It is easy to see that $\mathbb{F}=\E_{\Y}$, as desired.
\end{proof}

As an immediate consequence of Theorem \ref{main47} and Lemma \ref{lemma:4.10}, we have the following classification of $n$-exact substructures of a given skeletally small $n$-exact category.
\begin{corollary}\label{corollary:4.11} Let $(\C, \X)$ be a skeletally small $n$-exact category. Then there exists an isomorphism of posets between two posets, where poset structures are given by inclusion.

{\rm (1)} The poset of $n$-exact structures which is contained in $\X$.

{\rm (2)} The poset of Serre subcategories of ${\rm def}\mathbb{E}_{\X}$.
\end{corollary}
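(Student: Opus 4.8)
The proof amounts to composing the two poset isomorphisms that are already available, so the plan is first to make the bridge between them explicit and then to observe that the composite is again an isomorphism of posets.

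First I would regard the skeletally small $n$-exact category $(\C,\X)$ as the $n$-exangulated category $(\C,\E_{\X},\s_{\X})$ via the construction recalled just before Lemma \ref{lemma:4.10} (using \cite[Propositions 4.32 and 4.36]{HLN}). This category is again skeletally small, and since $\C$ is the underlying additive category of an $n$-exact category it has weak-kernels, so Theorem \ref{main47} applies to $(\C,\E_{\X},\s_{\X})$ and yields an isomorphism of posets
$$\mathbb{G}\longmapsto {\rm def}\,\mathbb{G}$$
from the poset of closed subbifunctors of $\E_{\X}$ onto the poset of Serre subcategories of ${\rm def}\,\E_{\X}$, both ordered by inclusion. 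On the other hand, Lemma \ref{lemma:4.10} provides an isomorphism of posets
$$\Y\longmapsto \E_{\Y}$$
from the poset of $n$-exact structures contained in $\X$ onto the poset of closed subbifunctors of $\E_{\X}$. Since the target of the second isomorphism is literally the source of the first, the two can be composed, and the resulting order-isomorphism is $\Y\mapsto {\rm def}\,\E_{\Y}$; this is the map announced in the statement.

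It then remains only to record that a composite of isomorphisms of posets is again an isomorphism of posets: both $\Y\mapsto\E_{\Y}$ and $\mathbb{G}\mapsto{\rm def}\,\mathbb{G}$ are order-preserving bijections whose inverses ($\mathbb{F}(-)$ from the proof of Lemma \ref{lemma:4.10}, and $\mathbb{F}({\rm def}\,(-))$ from the proof of Theorem \ref{main47}) are again order-preserving, so the composite and its inverse are order-preserving bijections too. The one point deserving a word is that the composite is well-defined with the stated target, i.e. that ${\rm def}\,\E_{\Y}$ is a Serre subcategory of ${\rm def}\,\E_{\X}$ and not merely of ${\rm mod}\,\C$; but this is exactly the content of the step ``a map ${\rm def}(-)$ from (1) to (2)'' in the proof of Theorem \ref{main47}, applied to the closed subbifunctor $\E_{\Y}\subseteq\E_{\X}$. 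I do not expect any real obstacle: once the $n$-exact structure is translated into closed-subbifunctor language by Lemma \ref{lemma:4.10}, the argument is purely formal bookkeeping of the two cited isomorphisms, and the proof runs entirely parallel to that of Corollaries \ref{corollary:4.9} and \ref{corollary:4.11}'s analogue for $n$-proper classes.
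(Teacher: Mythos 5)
Your argument is exactly the paper's: Corollary \ref{corollary:4.11} is obtained there simply by composing the poset isomorphism of Lemma \ref{lemma:4.10} (from $n$-exact substructures of $\X$ to closed subbifunctors of $\E_{\X}$) with that of Theorem \ref{main47} (from closed subbifunctors to Serre subcategories of ${\rm def}\,\E_{\X}$), and your bookkeeping of the inverses and of the target of ${\rm def}(-)$ is fine. The one inaccurate point is your justification for applying Theorem \ref{main47}: it is not true that the underlying additive category of an $n$-exact category automatically has weak-kernels (already for $n=1$, an additive category with its split exact structure is exact but need not admit weak kernels, e.g. finitely generated projectives over a non-coherent ring). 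You do not need this claim: weak-kernels are part of the standing assumption made at the beginning of Section \ref{section3} (and appear as a hypothesis of Theorem \ref{main47}), so they should simply be invoked as a hypothesis rather than derived from $n$-exactness. With that correction, your proof coincides with the paper's intended one.
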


\textbf{Jiangsheng Hu}\\
School of Mathematics and Physics, Jiangsu University of Technology,
 Changzhou, Jiangsu 213001, P. R. China.\\
E-mail: \textsf{jiangshenghu@jsut.edu.cn}\\[1mm]
\textbf{Yajun Ma}\\
Department of Mathematics, Nanjing University, Nanjing, Jiangsu 210093, China.\\
E-mail: \textsf{13919042158@163.com}\\[1mm]
\textbf{Dongdong Zhang}\\
Department of Mathematics, Zhejiang Normal University,
 Jinhua, Zhejiang 321004, P. R. China.\\
E-mail: \textsf{zdd@zjnu.cn}\\[1mm]
\textbf{Panyue Zhou}\\
College of Mathematics, Hunan Institute of Science and Technology, Yueyang, Hunan 414006, P. R. China.\\
E-mail: \textsf{panyuezhou@163.com}

\end{document}